\numberwithin{equation}{section}
\pgfplotsset{compat=newest}
\newtheorem{theorem}{Theorem}[section]
\newtheorem{lemma}{Lemma}[section]
\begin{document}
%\lhead{}
%\rhead{}

\begin{flushleft}
\Large 
\noindent{\bf \Large Sampling methods for the inverse cavity scattering problem of biharmonic waves}

\end{flushleft}

\vspace{0.2in}

{\bf  \large Isaac Harris }\\
\indent {\small Department of Mathematics, Purdue University, West Lafayette, IN 47907, USA, }\\
\indent {\small Email: \texttt{harri814@purdue.edu} }\\

{\bf  \large Peijun Li}\\
\indent {\small SKLMS, ICMSEC, Academy of Mathematics and Systems Science, } \\
\indent {\small Chinese Academy of Sciences, Beijing 100190, China,}\\
\indent {\small Email: \texttt{lipeijun@lsec.cc.ac.cn}}\\

{\bf  \large General Ozochiawaeze}\\
\indent {\small Department of Mathematics, Purdue University, West Lafayette, Indiana 47907, USA, } \\
\indent {\small Email: \texttt{oozochia@purdue.edu}}\\

\vspace{0.2in}

%%%%%%%%%%%%%%%%%%%%%%%%%%%%%%%%%%%%%%%%%%%%%%%%%%%%%%%%
\begin{abstract}
\noindent This paper addresses the inverse problem of qualitatively recovering a clamped cavity in a thin elastic plate using far-field measurements. We present a strengthened analysis of the linear sampling method by carefully examining the range of the far-field operator and employing the reciprocity relation of the biharmonic far-field pattern. In addition, we implement both the linear sampling method for reconstructing the cavity and the extended sampling method for localizing the cavity under limited-aperture data. Numerical experiments demonstrate the effectiveness and robustness of both methods.
\end{abstract}
%
%\noindent{\bf Keywords:} \\
%
%\noindent{\bf MSC:}

%%%%%%%%%%%%%%%%%%%%%%%%%%%%%%%%%%%%%%%%%%%%%%%%%%%%%%
\section{Introduction}
This paper investigates the inverse cavity scattering problem in a thin, infinitely extended elastic plate, where the objective is to determine an unknown cavity from the scattering of time-harmonic flexural waves governed by the biharmonic wave equation. Scattering of biharmonic waves in thin plates has diverse applications, including the design of ultra-broadband elastic cloaks for vibration control in vehicles and earthquake resistant buildings \cite{farhat2014platonic, PhysRevB.79.033102, PhysRevB.85.020301, PhysRevLett.103.024301, sozio2023optimal}. Platonic crystals, which are periodic arrays of cavities, enable wave manipulation similar to photonic and phononic crystals  \cite{gao2018theoretical,poulton2010convergence}. Acoustic black holes passively trap flexural waves, supporting applications in noise reduction, energy harvesting, and biomedical devices \cite{PELAT2020115316, krylov2014acoustic}. The biharmonic model also underlies non-destructive testing and structural health monitoring in aerospace engineering \cite{bourgeois2020linear, MEMMOLO2018568}. Despite these diverse applications, the theoretical study of biharmonic wave scattering remains comparatively underdeveloped. In contrast to classical scattering problems involving wave propagation in unbounded media, biharmonic waves describe out-of-plane displacements in thin elastic plates, presenting unique mathematical and computational challenges.

Computational approaches to inverse scattering are generally classified into optimization-based methods and sampling methods. Optimization-based techniques, though often highly accurate, require good initial guesses and involve significant computational cost due to the repeated solution of direct problems. In contrast, this work focuses on sampling methods, which aim to reconstruct the scatterer's boundary using indicator functions. These methods offer advantages such as reduced dependence on a priori geometric or physical information. However, they typically rely on multistatic measurement data and tend to yield only partial reconstructions. Notable examples include the linear sampling method, factorization method, reverse time migration, enclosure method, probe method, and direct sampling method \cite{cakoni2003mathematical, CCH-IPA, CCH-IPE, kirsch2007factorization, Ikehata2001, IkehataMasaru2022Rtpa, harris2021direct, AyalaRafaelCeja2024Ipas}. We refer to the monograph for a comprehensive overview of qualitative approaches in inverse scattering theory. \cite{alma99169166888701081}

Scattering problems for biharmonic waves have recently received increasing attention. A boundary integral formulation for biharmonic wave scattering was developed in \cite{DongHeping2024ANBI}, and the well-posedness of the associated direct problem was established in \cite{bourgeois2020well}. The uniqueness of the inverse cavity scattering problem was demonstrated in \cite{Dong2023UniquenessOA}, while an optimization-based  approach was proposed in \cite{chang2023optimization}. The application of the linear sampling method (LSM) to the inverse cavity problem in a Kirchhoff--Love plate model with near-field data was investigated in \cite{bourgeois2020linear}. In this work, we adapt the LSM and utilize far-field measurements, which offer several technical advantages for the biharmonic cavity scattering problem. Specifically, the decomposition of the scattered field into Helmholtz and modified Helmholtz components allows to exploit the exponential decay of the modified Helmholtz wave component. Furthermore, far-field measurements only involve plane waves, whereas near-field formulations must account for both the distance-dependent fundamental solution and dipole sources. As a result, far-field data reduce the complexity and quantity of required measurements, making them particularly attractive for practical implementation. Importantly, there exists a one-to-one correspondence between the far-field patterns of biharmonic and Helmholtz scattered waves, which provides a rigorous justification for applying the LSM in this setting.

Recently, the LSM was applied to identify unknown clamped cavities in a Kirchhoff--Love plate model using far-field measurements \cite{guoetal2024}. In that work, the theoretical justification required that the wavenumber not coincide with any Dirichlet eigenvalue of the scatterer. However, our analysis shows that this condition is not essential for accurate reconstruction. By deriving a reciprocity relation for the biharmonic far-field pattern, we relax the restriction on the wavenumber. Moreover, we propose a new factorization of the far-field operator that avoids the need for this additional assumption. In addition, we adapt the extended sampling method (ESM) introduced in \cite{Liu_2018}, enabling accurate localization of the scatterer using far-field measurements from as few as a single incident direction. This enhancement is particularly valuable in engineering applications such as non-destructive testing and structural health monitoring, where acquiring multistatic data is often impractical due to the cost and complexity of deploying multiple sensors. By eliminating the dependence on multistatic measurements, our approach simplifies experimental implementation and reduces resource requirements, thereby improving its feasibility and scalability in real-world settings.

The paper is outlined as follows. In Section \ref{section 2}, we present the problem formulation of the biharmonic wave scattering. Section \ref{section 3} establishes the reciprocity principle for the biharmonic far-field pattern. Section \ref{section 4} introduces a new factorization of the far-field operator and employs the reciprocity principle to justify the LSM for reconstructing unknown cavities using far-field measurements. In Section \ref{section_ESM}, we extend the ESM to recover clamped cavities from limited far-field data. Finally, Section \ref{section 5} provides numerical examples that demonstrate the effectiveness of the LSM with full-aperture far-field data and the ESM with limited-aperture measurements.

%%%%%%%%%%%%%%%%%%%%%%%%%%%%%%%%%%%%%%%%%%%%%%%%%%%%%%
\section{Problem formulation}\label{section 2}

Let $D\subset\mathbb R^2$ be a bounded open set with a smooth boundary $\Gamma$, representing the clamped cavity embedded in an infinitely extended, thin, two-dimensional elastic plate governed by the Kirchhoff--Love model in the pure bending regime. Assume that the exterior domain $\mathbb R^2\setminus\overline{D}$ is simply connected. We consider an incident field in the form of a time-harmonic plane wave described by
\begin{align*}
    u^i(x)=\text{e}^{{\rm i}\kappa x\cdot d}, \quad x\in \mathbb R^2,
\end{align*}
where $\kappa>0$ represents the wavenumber, and $d\in\mathbb S^1$ is a unit vector indicating the direction of wave incidence. The total displacement field $u$ is governed by the time-harmonic biharmonic equation
\begin{align}\label{be}
    \Delta^2u-\kappa^4u=0\quad \text{in } \mathbb R^2\setminus\overline{D}.
\end{align}
For simplicity, we impose clamped boundary conditions on the cavity boundary on $\Gamma$, i.e., 
\begin{align}\label{cbc}
    u=0,\quad \partial_{\nu}u=0,
\end{align}
where $\nu$ denotes the unit outward normal to $\Gamma$. 

The total field $u$ is represented as the superposition of the incident and scattered waves:
\begin{align*}
u=u^i+u^s,
\end{align*}
where $u^s$ denotes the scattered field. The scattered field $u^s$ satisfies the following boundary value problem in 
the unbounded exterior domain $\mathbb R^2\setminus\overline{D}$:
\begin{align}
\begin{dcases}\label{1.3}
        \Delta^2 u^s-\kappa^4u^s=0\quad \text{in }\mathbb R^2\setminus\overline{D},\\ 
        u^s=-u^i,\quad \partial_{\nu}u^s=-\partial_{\nu}u^i\quad\text{on }\Gamma,\\
        \lim_{r\to\infty}\sqrt{r}(\partial_ru^s-{\rm i}\kappa u^s)=0, \quad r=|x|, 
\end{dcases}
\end{align}
where the last equation is known as the Sommerfeld radiation condition, ensuring that the scattered wave is outgoing at infinity.

Following \cite{DongHeping2024ANBI}, we perform a biharmonic wave decomposition by introducing two auxiliary functions
\begin{align}\label{1.5}
u_{\text{H}}^s=-\frac{1}{2\kappa^2} (\Delta u^s-\kappa^2u^s),\quad u_{\text{M}}^s=\frac{1}{2\kappa^2}(\Delta u^s+\kappa^2 u^s), 
\end{align}
where $u_{\text{H}}^s$ and $u_{\text{M}}^s$ are referred to as the Helmholtz (or propagating) and modified Helmholtz (or evanescent) components, respectively. Combining \eqref{be} with \eqref{1.5}, we obtain in $\mathbb R^2\setminus\overline{D}$
\begin{align}\label{1.7}
\Delta u_{\text{H}}^s+\kappa^2u_{\text{H}}^s =0, \quad \Delta u_{\text{M}}^s-\kappa^2u_{\text{M}}^s=0.
\end{align}
Moreover, a simple calculation yields 
\begin{align}\label{1.6}
    u^s=u_{\text{H}}^s+u_{\text{M}}^s, \quad \Delta u^s=\kappa^2(u_{\text{M}}^s-u_{\text{H}}^s).
\end{align}
It follows from the clamped boundary conditions \eqref{cbc} that $u_{\text{H}}^s$ and $u_{\text{M}}^s$ satisfy the following coupled boundary conditions on $\Gamma$:
\begin{align*}%\label{1.8}
    u_{\text{H}}^s+u_{\text{M}}^s =-u^i,\quad \partial_{\nu}u_{\text{H}}^s+\partial_{\nu}u_{\text{M}}^s=-\partial_{\nu}u^i.
\end{align*}
As the propagating wave component, the Helmholtz component $u_{\text{H}}^s$ satisfies the Sommerfeld radiation condition
\begin{align}\label{1.9}
\lim_{r\to\infty}\sqrt{r}(\partial_ru_{\text{H}}^s-{\rm i}\kappa u_{\text{H}}^s)=0.
\end{align}
Since $u_{\text{M}}^s$ is the evanescent wave component, both $u_{\text{M}}^s$ and its radial derivative $\partial_ru_{\text{M}}^s$ decay exponentially as $r\to\infty$. 

Based on the operator splitting, the scattering problem defined in (\ref{1.3}) is equivalent to a coupled boundary value problem for the Helmholtz and modified Helmholtz equations, as given in (\ref{1.7})--(\ref{1.9}). The well-posedness of this coupled boundary value problem, and hence of the original problem (\ref{1.3}), for any wavenumber $\kappa$, is discussed in \cite{bourgeois2020well, YueJunhong2024Nsot}. According to \cite[Proposition 2.2]{bourgeois2020well}, the asymptotic behaviors of $u_{\text{H}}^s$, $u_{\text{M}}^s$, and $\partial_ru_{\text{M}}^s$ are given by
\begin{align*}
|u_{\text{H}}^s|=\mathcal{O}\left(\frac{1}{\sqrt{r}} \right),\quad |u_{\text{M}}^s| = \mathcal{O}\left(\frac{\mathrm{e}^{-\kappa r}}{\sqrt{r}}\right), \quad \text{and } \quad |\partial_ru_{\text{M}}^s|=\mathcal{O}\left(\frac{\mathrm{e}^{-\kappa r}}{\sqrt{r}}\right)
\end{align*}
as $r\to\infty$. 

By Green's representation theorem, the Helmholtz and modified Helmholtz components admit the following boundary integral representations for $x\in \mathbb R^2\setminus\overline{D}$:
\begin{align*}
  u^s_{\rm p}(x)=\int_{\Gamma}\left(u^s_{\text{p}}(y)\partial_{\nu_y}\Phi_{\rm p}(x,y; \kappa)-\partial_{\nu_y}u^s_{\text{p}}(y)\Phi_{\rm p}(x,y; \kappa)\right)\text{d}s(y), 
\end{align*}
where ${\rm p}={\rm H}$ or ${\rm M}$, and $\Phi_{\rm p}(x,y; \kappa)$ denotes the fundamental solution of the corresponding Helmholtz or modified Helmholtz equation, expressed as
\begin{align*}
 \Phi_{\rm p}(x,y; \kappa)=\begin{cases}
                    \dfrac{\rm i}{4}H_0^{(1)}(\kappa|x-y|),\quad {\rm p}={\rm H},\\[5pt]
                    \dfrac{\rm i}{4}H_0^{(1)}({\rm i}\kappa|x-y|),\quad {\rm p}={\rm M}.
                   \end{cases}
\end{align*}
Here, $H^{(1)}_0$ refers to the zeroth-order Hankel function of the first kind.

Given that $u^s$ is radiative, it can be expressed asymptotically as:
\begin{align}{\label{1.10}}
u^s(x)=\frac{\text{e}^{{\rm i}\pi/4}}{\sqrt{8\pi\kappa}}\frac{\text{e}^{{\rm i}\kappa r}}{\sqrt{r}}u^{\infty}(\Hat{x})+O \left(\frac{1}{r^{3/2}}\right),\quad r \to\infty,
\end{align}
where $\Hat{x}\coloneqq x/r\in \mathbb S^1$. The far-field pattern of the scattered field $u^s$ is described by the analytic function $u^{\infty}$, which is defined on the unit circle $\mathbb S^1$. It characterizes the scattering amplitude of the radiating solution $u^s$ in the far-field region. The corresponding inverse problem consists of determining the cavity $D$ from knowledge of the far-field pattern $u^{\infty}$.

Let $\mathcal{F}: L^2(\mathbb S^1)\to L^2(\mathbb S^1)$ denote the far-field operator, specified by:
\begin{align*}
    (\mathcal{F}g)(\Hat{x})\coloneqq \int_{\mathbb S^1}u^{\infty}(\Hat{x},d)g(d) \text{d}s(d). 
\end{align*}
This operator maps a density function $g$, defined on the unit circle of incident directions, to the corresponding superposition of far-field patterns $u^{\infty}(\hat x, d)$ observed in directions $\hat x\in \mathbb S^1$. The inverse scattering problem for a clamped cavity can be stated as follows: Given the far-field operator $\mathcal{F}$ for a range of wavenumbers $\kappa$, determine qualitative characteristics of the clamped cavity $D$ embedded in a thin elastic plate. The uniqueness of this problem has been established in \cite{Dong2023UniquenessOA}.

It is important to note that the direct scattering problem described by (\ref{1.3}) is equivalent to the coupled boundary value problem (\ref{1.7})--(\ref{1.9}). Formulating the problem in this coupled form offers a significant advantage: it transforms a fourth-order boundary value problem into a more tractable system of second-order equations. Moreover, both 
$u^s_{\text{M}}$ and $\partial_r u^s_{\text{M}}$ decay exponentially as $r\to\infty$ for a fixed wavenumber $\kappa$. Consequently, the far-field patterns of $u^s$ and $u^s_{\text{H}}$ are identical.

%%%%%%%%%%%%%%%%%%%%%%%%%%%%%%%%%%%%%%%%%%
\section{Reciprocity principle of far-field patterns}\label{section 3}

We consider the recovery of a clamped cavity $D$ from far-field measurements at a fixed wavenumber. The reconstruction relies critically on an approximate solvability condition for the so-called far-field equation, which is derived from a reciprocity principle satisfied by the far-field patterns of the scattered field associated with the biharmonic wave equation. This section addresses the reciprocity relation in detail.

By applying the definition of the far-field pattern in (\ref{1.10}) and Green's second identity, it is shown in \cite{Dong2023UniquenessOA} that the far-field pattern admits the following Green's representation formula:
\begin{align}\label{Green's_far_field}
u^{\infty}(\Hat{x})&= \frac{1}{2}\frac{\text{e}^{\mathrm{i}\pi/4}}{\sqrt{8\pi \kappa}}\int_{\Gamma}\left(u^s(y)\frac{\partial \text{e}^{-\mathrm{i}\kappa \Hat{x}\cdot y}}{\partial \nu(y)}-\text{e}^{-\mathrm{i}\kappa \Hat{x}\cdot y}\frac{\partial u^s}{\partial\nu}(y)\right) \text{d}s(y) \\
&\quad \quad -\frac{1}{2\kappa^2}\frac{\text{e}^{\mathrm{i}\pi/4}}{\sqrt{8\pi \kappa}}\int_{\Gamma}\left(\Delta u^s(y)\frac{\partial \text{e}^{-\mathrm{i}\kappa \Hat{x}\cdot y}}{\partial \nu(y)}-\text{e}^{-\mathrm{i}\kappa \Hat{x}\cdot y}\frac{\partial \Delta u^s}{\partial\nu}(y)\right) \text{d}s(y),\quad \Hat{x}\in \mathbb S^1 \nonumber.
\end{align}

Let $\Omega$ be a bounded domain of class $C^2$. Consider the Hilbert space
\begin{align*}
    H^2(\Omega,\Delta^2) = \left\{w\in H^2(\Omega): \Delta^2 w\in L^2(\Omega) \right\},
\end{align*}
equipped with the norm
\begin{align*}
\|w\|_{H^2(\Omega,\Delta^2)}^2 =\|w\|_{H^2(\Omega)}^2+\|\Delta^2 w\|_{L^2(\Omega)}^2.
\end{align*}
The space $H^2(\Omega,\Delta^2)$ is the maximal domain of the biharmonic operator $\Delta^2$, regarded as an unbounded operator on $L^2(\Omega)$.

According to Green's second identity, for $u,v\in H^2(D,\Delta^2)$, we have
\begin{align}\label{Green's_id}
    \int_D \left\{(\Delta^2u)v-\Delta u\Delta v \right\}\, \text{d}x=\int_{\Gamma}\left\{ v\frac{\partial\Delta u}{\partial\nu}-\Delta u\frac{\partial v}{\partial\nu} \right\} \text{d}s.
\end{align}
Applying identity \eqref{Green's_id} twice and interchanging the roles of $u$ and $v$, we obtain
\begin{align}\label{Green's_2nd_id}
    \int_{D}\left\{ (\Delta^2u)v-(\Delta^2v)u \right\}\text{d}x=\int_{\Gamma}\left\{v\frac{\partial \Delta u}{\partial\nu}-\Delta u\frac{\partial v}{\partial\nu}+\Delta v\frac{\partial u}{\partial\nu}-u\frac{\partial\Delta v}{\partial\nu} \right\} \text{d}s.
\end{align}

The following result establishes the reciprocity principle in two dimensions for the far-field pattern of the radiating solution to the biharmonic wave equation.

\begin{theorem}\label{Reciprocity Thm}
For the direct scattering problem \eqref{1.3} with plane wave incidence $u^i=\mathrm{e}^{\mathrm{i}\kappa x\cdot d}$, 
the far-field pattern $u^{\infty}(\Hat{x},d)$ of the corresponding radiating solution satisfies the identity
    \begin{align*}
        u^{\infty}(-\Hat{x},d)=u^{\infty}(-d,\Hat{x}),\quad\forall\, \Hat{x},d\in \mathbb S^1. 
    \end{align*}
\end{theorem}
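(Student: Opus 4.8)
The plan is to reduce the far-field reciprocity identity to a symmetry statement about a boundary bilinear form and then to exploit the radiation behaviour of the propagating and evanescent components. I introduce the antisymmetric boundary form
$$\mathcal{B}(u,v)=\int_{\Gamma}\left(v\,\partial_\nu\Delta u-\Delta u\,\partial_\nu v+\Delta v\,\partial_\nu u-u\,\partial_\nu\Delta v\right)\mathrm{d}s,$$
which by \eqref{Green's_2nd_id} equals $\int_{D}\left((\Delta^2 u)v-(\Delta^2 v)u\right)\mathrm{d}x$ whenever $u,v\in H^2(D,\Delta^2)$ and which satisfies $\mathcal{B}(u,v)=-\mathcal{B}(v,u)$. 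The first step is to recast the Green's representation formula \eqref{Green's_far_field}: since the plane wave $w(y)=\mathrm{e}^{-\mathrm{i}\kappa\hat x\cdot y}=u^i(y,-\hat x)$ obeys $\Delta w=-\kappa^2 w$, a short computation using $\partial_\nu\Delta w=-\kappa^2\partial_\nu w$ collapses the two boundary integrals in \eqref{Green's_far_field} into the single expression $u^\infty(\hat x,d)=\frac{C}{\kappa^2}\mathcal{B}\!\left(u^s(\cdot,d),u^i(\cdot,-\hat x)\right)$, with $C=\tfrac12\mathrm{e}^{\mathrm{i}\pi/4}/\sqrt{8\pi\kappa}$. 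Replacing $\hat x$ by $-\hat x$ gives $u^\infty(-\hat x,d)=\frac{C}{\kappa^2}\mathcal{B}(u^s_d,u^i_{\hat x})$ and, symmetrically, $u^\infty(-d,\hat x)=\frac{C}{\kappa^2}\mathcal{B}(u^s_{\hat x},u^i_d)$, where I abbreviate $u^s_d=u^s(\cdot,d)$, $u^i_d=u^i(\cdot,d)$, and likewise for $\hat x$. The theorem is therefore equivalent to the symmetry $\mathcal{B}(u^s_d,u^i_{\hat x})=\mathcal{B}(u^s_{\hat x},u^i_d)$.

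Next I would expand the total fields $u_d=u^i_d+u^s_d$ and $u_{\hat x}=u^i_{\hat x}+u^s_{\hat x}$ inside $\mathcal{B}(u_d,u_{\hat x})$. Because both total fields satisfy the clamped conditions $u_d=\partial_\nu u_d=0$ and $u_{\hat x}=\partial_\nu u_{\hat x}=0$ on $\Gamma$, each term in the integrand of $\mathcal{B}(u_d,u_{\hat x})$ carries a vanishing factor, so $\mathcal{B}(u_d,u_{\hat x})=0$. Bilinearity then yields
$$\mathcal{B}(u^i_d,u^i_{\hat x})+\mathcal{B}(u^i_d,u^s_{\hat x})+\mathcal{B}(u^s_d,u^i_{\hat x})+\mathcal{B}(u^s_d,u^s_{\hat x})=0.$$
The incident--incident term vanishes by the volume form of $\mathcal{B}$, since both plane waves are entire solutions of $\Delta^2 w=\kappa^4 w$ and hence $\mathcal{B}(u^i_d,u^i_{\hat x})=\int_D\kappa^4\big(u^i_d u^i_{\hat x}-u^i_{\hat x}u^i_d\big)\,\mathrm{d}x=0$. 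Invoking antisymmetry $\mathcal{B}(u^i_d,u^s_{\hat x})=-\mathcal{B}(u^s_{\hat x},u^i_d)$, the identity becomes $\mathcal{B}(u^s_d,u^i_{\hat x})-\mathcal{B}(u^s_{\hat x},u^i_d)+\mathcal{B}(u^s_d,u^s_{\hat x})=0$, so the desired symmetry is equivalent to the single claim $\mathcal{B}(u^s_d,u^s_{\hat x})=0$.

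Establishing $\mathcal{B}(u^s_d,u^s_{\hat x})=0$ for two radiating solutions is the heart of the argument and the step I expect to be the main obstacle. I would apply \eqref{Green's_2nd_id} on the annulus $\trB$: since $\Delta^2 u^s_j=\kappa^4 u^s_j$ there, the volume integral vanishes, and the $\Gamma$-contribution (now with the normal pointing into $D$) reproduces $\mathcal{B}(u^s_d,u^s_{\hat x})$ up to sign, so that $\mathcal{B}(u^s_d,u^s_{\hat x})$ equals the analogous boundary integral over the circle $\partial B_R$ for every large $R$. It then remains to show this circle integral tends to zero as $R\to\infty$. Substituting the decomposition $u^s_j=u^s_{\mathrm{H},j}+u^s_{\mathrm{M},j}$ and $\Delta u^s_j=\kappa^2(u^s_{\mathrm{M},j}-u^s_{\mathrm{H},j})$ from \eqref{1.6}, every term involving a modified Helmholtz component carries the factor $\mathrm{e}^{-\kappa R}$ from \cite[Proposition 2.2]{bourgeois2020well} and is annihilated in the limit, while the surviving propagating part reduces, after the cancellations forced by $\Delta u^s_{\mathrm{H},j}=-\kappa^2 u^s_{\mathrm{H},j}$, to a Wronskian-type integrand $2\kappa^2\big(u^s_{\mathrm{H},d}\,\partial_r u^s_{\mathrm{H},\hat x}-u^s_{\mathrm{H},\hat x}\,\partial_r u^s_{\mathrm{H},d}\big)$. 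The Sommerfeld condition \eqref{1.9}, written as $\partial_r u^s_{\mathrm{H},j}-\mathrm{i}\kappa u^s_{\mathrm{H},j}=o(r^{-1/2})$, makes the leading $\mathrm{i}\kappa$ contributions cancel, leaving an integrand that is $o(R^{-1})$ against a circle of circumference $2\pi R$; hence the integral vanishes in the limit. The delicate point is the uniform control of these asymptotics---verifying that the error terms are genuinely integrable to $o(1)$ and that the exponentially small cross terms between the two components do not accumulate---which is precisely where the exponential decay of the evanescent component and the sharp radiation estimates are indispensable. Combining $\mathcal{B}(u^s_d,u^s_{\hat x})=0$ with the previous paragraph gives $\mathcal{B}(u^s_d,u^i_{\hat x})=\mathcal{B}(u^s_{\hat x},u^i_d)$, which is the asserted reciprocity relation.
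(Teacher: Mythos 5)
Your proof is correct, and its skeleton coincides with the paper's: Green's identity \eqref{Green's_2nd_id} applied to the two incident waves in $D$ (your $\mathcal{B}(u^i_d,u^i_{\hat x})=0$ is the paper's \eqref{plane_wave_rep}), applied to the two scattered waves in $\trB$ with a passage to the limit $R\to\infty$ (your $\mathcal{B}(u^s_d,u^s_{\hat x})=0$ is the paper's claim $I_{\Gamma}=0$), the rewriting of \eqref{Green's_far_field} via $\Delta u^i=-\kappa^2 u^i$ (the paper's \eqref{far_field_rep1}--\eqref{far_field_rep2}), and a final combination using the clamped conditions on the total fields; packaging all of this in the antisymmetric form $\mathcal{B}$ is tidier bookkeeping, not a different route. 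Where you genuinely depart from the paper --- and improve on it --- is the limit of the $\partial B_R$ integral. Expanding the integrand with \eqref{1.6}, the Helmholtz/modified-Helmholtz cross terms cancel identically, and what survives is twice the pure Helmholtz Wronskian $u^s_{\text{H},d}\,\partial_r u^s_{\text{H},\hat x}-u^s_{\text{H},\hat x}\,\partial_r u^s_{\text{H},d}$ plus twice the pure modified-Helmholtz Wronskian; the paper's decomposition into the cross terms $J_1,\dots,J_4$ cannot be right as stated, since $J_3=-J_1$ and $J_4=-J_2$ make that sum vanish identically, and its stated justification (boundedness of the propagating component plus exponential decay of the evanescent one) only disposes of terms that in fact cancel algebraically. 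The evanescent Wronskian does die by exponential decay, but the Helmholtz Wronskian is a product of two $O(R^{-1/2})$ factors integrated over a circle of length $2\pi R$, hence only $O(1)$ by size alone; it is killed precisely by the Sommerfeld cancellation you invoke: write $\partial_r u^s_{\text{H}}=\mathrm{i}\kappa u^s_{\text{H}}+\left(\partial_r u^s_{\text{H}}-\mathrm{i}\kappa u^s_{\text{H}}\right)$, observe that the $\mathrm{i}\kappa$ contributions cancel in the Wronskian, and conclude with Cauchy--Schwarz using \eqref{1.9} together with the boundedness of $\|u^s_{\text{H}}\|_{L^2(\partial B_R)}$. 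So your version supplies the step the paper's write-up actually needs. The only polish I would suggest is to phrase that final estimate in the $L^2(\partial B_R)$ form just described rather than through pointwise $o(r^{-1/2})$ asymptotics, since the $L^2$ statements follow directly from the radiation condition and far-field expansion and avoid any question of uniformity in the angular variable.
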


\begin{proof}
We begin by observing that applying \eqref{Green's_2nd_id} to the incident fields $u^i(\cdot,\Hat{x})$ and $u^i(\cdot,d)$ inside the scatterer $D$ yields
\begin{align}\label{plane_wave_rep}
&\int_{\Gamma}\left(u^i(\cdot,\Hat{x})\frac{\partial \Delta u^i(\cdot,d)}{\partial\nu}-\Delta u^i(\cdot,d)\frac{\partial u^i(\cdot,\Hat{x})}{\partial\nu}\right) \text{d}s \nonumber\\
&\hspace{0.75in} +\int_{\Gamma}\left(\Delta u^i(\cdot, \Hat{x})\frac{\partial u^i(\cdot, d)}{\partial\nu}-u^i(\cdot,d)\frac{\partial \Delta u^i(\cdot,\Hat{x})}{\partial\nu}\right) \text{d}s=0.
\end{align}
Using (\ref{Green's_2nd_id}) to the scattered fields $u^s(\cdot,d)$ and $u^s(\cdot,\Hat{x})$ in in the exterior domain $B_R\setminus\overline{D}$, where $B_R$ denotes the open ball of radius $R>0$, centered at the origin, chosen so that 
$D \subset B_R$, we have $I_{\partial B_R}-I_{\Gamma}=0$, where
\begin{align}\label{scattered_wave_rep}
I_{\Gamma}&\coloneqq\int_{\Gamma}\left(u^s(\cdot,\Hat{x})\frac{\partial \Delta u^s(\cdot,d)}{\partial\nu}-\Delta u^s(\cdot,d)\frac{\partial u^s(\cdot,\Hat{x})}{\partial\nu}\right)\text{d}s \nonumber\\
&\hspace{0.75in} +\int_{\Gamma}\left(\Delta u^s(\cdot, \Hat{x})\frac{\partial u^s(\cdot, d)}{\partial\nu}-u^s(\cdot,d)\frac{\partial \Delta u^s(\cdot,\Hat{x})}{\partial\nu}\right)\text{d}s,
\end{align}
and
\begin{align}\label{disc_integral}
I_{\partial B_R} &\coloneqq\int_{\partial B_R}\left(u^s(\cdot,\Hat{x})\frac{\partial \Delta u^s(\cdot,d)}{\partial\nu}-\Delta u^s(\cdot,d)\frac{\partial u^s(\cdot,\Hat{x})}{\partial\nu}\right) \text{d}s \nonumber\\
&\hspace{0.75in} +\int_{\partial B_R}\left(\Delta u^s(\cdot, \Hat{x})\frac{\partial u^s(\cdot, d)}{\partial\nu}-u^s(\cdot,d)\frac{\partial \Delta u^s(\cdot,\Hat{x})}{\partial\nu}\right)\text{d}s.
\end{align}
We now wish to verify that
\[
I_{\partial B_R}\to 0 \quad\text{as } \quad R \to \infty.
\]

Applying the decomposition of the scattered field given in \eqref{1.6}, and substituting these expressions into the boundary integral $I_{\partial B_R}$ in (\ref{disc_integral}), we deduce 
\begin{align*}
I_{\partial B_R}=(J_1-J_2)+(J_3-J_4),
\end{align*}
where
\begin{align*}
J_1&=\kappa^2\int_{\partial B_R}\left(u_{\text{H}}^s(\cdot,\Hat{x})\frac{\partial u_{\text{M}}^s(\cdot,d)}{\partial\nu}-u_{\text{M}}^s(\cdot,\Hat{x})\frac{\partial u_{\text{H}}^s(\cdot,d)}{\partial\nu}\right)\text{d}s,\\
J_2&=\kappa^2\int_{\partial B_R}\left(u_{\text{M}}^s(\cdot,d)\frac{\partial u_{\text{H}}^s(\cdot,\Hat{x})}{\partial\nu}-u_{\text{H}}^s(\cdot,d)\frac{\partial u_{\text{M}}^s(\cdot,\Hat{x})}{\partial\nu}\right) \text{d}s,\\
J_3&=\kappa^2\int_{\partial B_R}\left(u_{\text{M}}^s(\cdot,\Hat{x})\frac{\partial u_{\text{H}}^s(\cdot,d)}{\partial\nu}-u_{\text{H}}^s(\cdot,\Hat{x})\frac{\partial u_{\text{M}}^s(\cdot,d)}{\partial\nu}\right) \text{d}s,\\
J_4&=\kappa^2\int_{\partial B_R}\left(u_{\text{H}}^s(\cdot,d)\frac{\partial u_{\text{M}}^s(\cdot,\Hat{x})}{\partial\nu}-u_{\text{M}}^s(\cdot,d)\frac{\partial u_{\text{H}}^s(\cdot,\Hat{x})}{\partial\nu}\right)\text{d}s.
\end{align*}
For each term $J_j$ using the asymptotic behavior of the fields, we note that the $L^2(\partial B_R)$--norm of $u^s_{\text{H}}$ and $\partial_r u^s_{\text{H}}$ remain bounded as $ R \to \infty$. Moreover,
\begin{align*}
u_{\text{M}}^s = \mathcal{O}\left(\frac{\mathrm{e}^{-\kappa R}}{\sqrt{R}}\right), \quad  \partial_r u_{\text{M}}^s= \mathcal{O}\left(\frac{\mathrm{e}^{-\kappa R}}{\sqrt{R}}\right),\quad\text{as } R\to\infty,
\end{align*}
and by applying the Cauchy--Schwarz inequality, we deduce that
\begin{align*}
J_j\to 0\quad\text{as } R\to \infty,\quad j=1,2,3,4.
\end{align*}
Therefore, we conclude that  $I_{\partial B_R}\to 0$ as $R\to\infty$. Consequently, it follows that $I_{\Gamma}=0$.

Noting that the incident plane wave satisfies the Helmholtz equation, i.e., $\Delta u^i=-\kappa^2u^i$, and substituting this identity into the far-field pattern representation given by (\ref{Green's_far_field}), we obtain
\begin{align} \label{far_field_rep1}
&\frac{1}{2\kappa^2}\int_{\Gamma}\left(-u^s(\cdot,\Hat{x})\frac{\partial \Delta u^i(\cdot,d)}{\partial\nu}+\Delta u^i(\cdot,d)\frac{\partial u^s(\cdot,\Hat{x})}{\partial\nu}\right) \text{d}s \nonumber\\
&\hspace{0.5in} -\frac{1}{2\kappa^2}\int_{\Gamma}\left(\Delta u^s(\cdot,\Hat{x})\frac{\partial u^i(\cdot,d)}{\partial\nu}-u^i(\cdot,d)\frac{\partial \Delta u^s(\cdot,\Hat{x})}{\partial\nu}\right)\text{d}s=\frac{\sqrt{8\kappa \pi}}{\mathrm{e}^{\mathrm{i}\pi/4}}u^{\infty}(-d,\Hat{x}).
\end{align}
Similarly, we have
\begin{align}\label{far_field_rep2}
&\frac{1}{2\kappa^2}\int_{\Gamma}\left(-u^s(\cdot,d)\frac{\partial \Delta u^i(\cdot,\Hat{x})}{\partial\nu}+\Delta u^i(\cdot,\Hat{x})\frac{\partial u^s(\cdot,d)}{\partial\nu}\right) \text{d}s \nonumber\\
&\hspace{0.5in}-\frac{1}{2\kappa^2}\int_{\Gamma}\left(\Delta u^s(\cdot,d)\frac{\partial u^i(\cdot,\Hat{x})}{\partial\nu}-u^i(\cdot,\Hat{x})\frac{\partial \Delta u^s(\cdot,d)}{\partial\nu}\right)\text{d}s=\frac{\sqrt{8\kappa \pi}}{\mathrm{e}^{\mathrm{i}\pi/4}}u^{\infty}(-\Hat{x},d).
\end{align}
Now subtracting (\ref{far_field_rep1}) from the sum of (\ref{plane_wave_rep}), (\ref{scattered_wave_rep}), and (\ref{far_field_rep2}), we obtain
\begin{align*}
\frac{8\kappa \pi}{\mathrm{e}^{\mathrm{i}\pi/4}}(u^{\infty}(-d,\Hat{x})-u^{\infty}(-\Hat{x},d))&=\frac{1}{2\kappa^2}\int_{\Gamma}\left(-u(\cdot,d)\frac{\partial \Delta u(\cdot,\Hat{x})}{\partial\nu}+\Delta u(\cdot,\Hat{x})\frac{\partial u(\cdot,d)}{\partial\nu}\right)\text{d}s\\
&\quad -\frac{1}{2\kappa^2}\int_{\Gamma}\left(\Delta u(\cdot,d)\frac{\partial u(\cdot,\Hat{x})}{\partial\nu}-u(\cdot,\Hat{x})\frac{\partial \Delta u(\cdot,d)}{\partial\nu}\right)\text{d}s
=0.
\end{align*}
Notice that we have used the boundary conditions for the total field, i.e., $u=0, \partial_{\nu}u=0$ on $\Gamma$, which completes the proof of the reciprocity relation.
\end{proof}

%%%%%%%%%%%%%%%%%%%%%%%%%%%%%%%%%%%%%%%%%%%%%
\section{The linear sampling method}\label{section 4}

In this section, we investigate the application of the LSM to reconstruct the scatterer $D$ from far-field measurements. The LSM is designed to construct an indicator function that determines whether a sampling point $ z \in \mathbb{R}^2$ lies inside or outside the cavity $D$. This is achieved by approximately solving the far-field equation: for each $z$, find a function $g_z\in L^2(\mathbb S^1)$ satisfying
\begin{align}\label{far_field_eqn}
\mathcal{F}g_z= \Phi^{\infty}(\cdot \, ,z), \quad \text{ where }\, \Phi^{\infty}(\Hat{x},z)=-\frac{1}{2\kappa^2}\frac{\text{e}^{\mathrm{i}\pi/4}}{\sqrt{8\pi\kappa}}\text{e}^{-\mathrm{i}\kappa\Hat{x}\cdot z},\quad z\in \mathbb R^2. 
\end{align}
Here,  $\Phi^{\infty}(\Hat{x},z)$ denotes the far-field pattern of the outgoing fundamental solution to the biharmonic operator $\Delta^2-\kappa^4$, corresponding to a point source located at $z\in\mathbb R^2$. This fundamental solution is explicitly given by
\[
\Phi(x,z)=\frac{1}{2\kappa^2} \Big[\Phi_{\rm H}(x,z; \kappa)-\Phi_{\rm M}(x,z; \kappa) \Big],\quad x\neq z.
\]
The mapping $z \mapsto \| g_z \|_{L^2(\mathbb{S}^1)}$ serves as the LSM indicator function: it tends to be large when $z\notin D$ and relatively small when $z\in D$, thereby allowing for the identification of the cavity's support.

The LSM relies on a key analytical property of the far-field operator $\mathcal F$, i.e., it's injective with a dense range, under a suitable assumption on the wavenumber $\kappa$. This property of $\mathcal{F}$ guarantees the approximate solvability of the far-field equation and is essential for the validity of the LSM.    

Following earlier studies, we derive a factorization of the far-field operator $\mathcal{F}$ to facilitate its analytical investigation. We introduce the auxiliary operator $\mathcal{H} : L^2(\mathbb S^1)\to H^{3/2}(\Gamma)\times H^{1/2}(\Gamma)$ defined by 
\begin{align}\label{herglotzOp}
\mathcal{H}g \coloneqq \big( v_g |_{\Gamma}, \partial_{\nu}v_g |_{\Gamma} \big)^\top,
\end{align}
where 
\begin{align}\label{herglotzDef}
    v_g(x)\coloneqq \int_{\mathbb S^1} \mathrm{e}^{\mathrm{i}\kappa x\cdot d}g(d) \text{d}s(d),\quad x\in \mathbb R^2, 
\end{align}
is the Herglotz wave function. The operator $\mathcal H$ is called the Herglotz wave operator. By the linearity of the direct scattering problem, the far-field operator $\mathcal{F}g$ can be interpreted as the far-field pattern corresponding to the incident field $v_g$, i.e., with boundary data $-\mathcal{H}g$ in the direct scattering problem (\ref{1.7})--(\ref{1.9}). 

To complete the factorization of the far-field operator, we introduce an additional auxiliary operator. As established in \cite{bourgeois2020well}, for any pair $(h_1 , h_2) \in H^{3/2}(\Gamma)\times H^{1/2}(\Gamma)$, the boundary value problem
\begin{align}\label{4.6}
    \begin{dcases}
        \Delta^2 w-\kappa^4w=0\quad \text{in }\mathbb R^2\setminus\overline{D},\\
        w= h_1,\quad \partial_{\nu}w=h_2\quad \text{on }\Gamma,\\
        \lim_{r\to \infty}\sqrt{r}(\partial_rw-\mathrm{i}\kappa w)=0
    \end{dcases}
\end{align}
is well-posed for $w\in H_{\text{loc}}^2(\mathbb R^2\setminus\overline{D})$. Indeed, we have the stability estimate 
\begin{align*}
\|w\|_{H^2(B_R\setminus\overline{D})}\leq C \big(\|h_1\|_{H^{3/2}(\Gamma)}+\|h_2\|_{H^{1/2}(\Gamma)}\big),
\end{align*}
where $B_R$ is a ball of sufficiently large radius $R$ centered at the origin such that $D\subset B_R$, and $C>0$ is a constant depending only on $R$. Based on this, we define the data-to-pattern operator $ \mathcal{G}: H^{3/2}(\Gamma)\times H^{1/2}(\Gamma)\to L^2(\mathbb S^1)$, given by 
\begin{align}\label{data2pattern}
 \mathcal{G}( h_1, h_2)^\top=w^{\infty}, 
\end{align}
which maps the boundary data $(h_1, h_2)^\top\in H^{3/2}(\Gamma)\times H^{1/2}(\Gamma)$ to the corresponding far-field pattern $w^{\infty}$ of the radiating solution $w$ to the boundary value problem \eqref{4.6}. By the superposition principle, the far-field operator $\mathcal{F}$ admits the factorization $\mathcal{F} = -\mathcal{G} \mathcal{H}$. The theoretical foundation of the LSM is based on characterizing the cavity $D$ in terms of the range of the auxiliary operator $\mathcal{G}$. 

In a subsequent lemma, we assume that the wavenumber $\kappa$ is not an eigenvalue of the clamped transmission eigenvalue problem: find $(p,q)\in H^1(\mathbb R^2\setminus\overline{D})\times H^1(D)$ such that 
\begin{align} \label{eigenvalue_problem} 
    \begin{dcases}
         \Delta p - \kappa^2 p =0 \quad  \text{in }\mathbb R^2\setminus\overline{D}, \\
          \Delta q + \kappa^2 q =0 \quad \text{in }{D},\\
        p+q=0,\quad  \partial_{\nu} (p+q)=0\quad\text{on }\Gamma,
    \end{dcases}
\end{align}
with the additional condition that $p$ decays exponentially as $r\to\infty$. We refer to \cite{HLK-tep} for a detailed discussion of the above clamped transmission eigenvalue problem \eqref{eigenvalue_problem}. 
    
In addition, we establish a key lemma essential for the applicability of the LSM. Recall that the LSM constructs an indicator function for the cavity $D$ via the far-field equation. Below, we present a range characterization of the scatterer $D$ in terms of the data-to-pattern operator.

\begin{lemma}\label{rangethm}
For the operator $\mathcal{G}\,:\, H^{3/2}(\Gamma)\times H^{1/2}(\Gamma)\to L^2(\mathbb S^1)$ defined by \eqref{data2pattern}, the clamped cavity $D$ admits the following range characterization:
\[
\Phi^{\infty}(\cdot \, , z) \in \mathrm{Range}(\mathcal{G})\iff z\in D.
\]
\end{lemma}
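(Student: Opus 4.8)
The plan is to prove the two directions of the equivalence separately, following the standard template for range characterizations in the linear sampling method, but adapted to the biharmonic setting via the far-field pattern $\Phi^\infty(\cdot,z)$ of the fundamental solution $\Phi(x,z)$.

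\textbf{The direction $z \in D \implies \Phi^\infty(\cdot,z) \in \mathrm{Range}(\mathcal{G})$.} Fix $z \in D$. The key observation is that the fundamental solution $\Phi(\cdot,z)$ is itself a radiating solution of $\Delta^2 w - \kappa^4 w = 0$ in $\mathbb{R}^2 \setminus \overline{D}$, since its singularity at $z$ lies inside $D$ and it inherits the Sommerfeld radiation condition through its Helmholtz component $\Phi_{\rm H}$ (the modified Helmholtz part $\Phi_{\rm M}$ decays exponentially). Therefore I would set $h_1 := \Phi(\cdot,z)|_\Gamma \in H^{3/2}(\Gamma)$ and $h_2 := \partial_\nu \Phi(\cdot,z)|_\Gamma \in H^{1/2}(\Gamma)$, and observe that $w = \Phi(\cdot,z)$ is precisely the unique radiating solution of the boundary value problem \eqref{4.6} with this data. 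Its far-field pattern is by definition $\Phi^\infty(\cdot,z)$, so $\mathcal{G}(h_1,h_2)^\top = \Phi^\infty(\cdot,z)$, giving membership in the range.

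\textbf{The direction $z \notin D \implies \Phi^\infty(\cdot,z) \notin \mathrm{Range}(\mathcal{G})$.} I would argue by contradiction. Suppose $z \notin D$ but $\Phi^\infty(\cdot,z) = \mathcal{G}(h_1,h_2)^\top = w^\infty$ for some radiating solution $w$ of \eqref{4.6}. Since the far-field pattern of a radiating biharmonic solution coincides with that of its Helmholtz component (as noted at the end of Section \ref{section 2}), and since the Helmholtz far-field pattern uniquely determines a radiating Helmholtz solution by Rellich's lemma and unique continuation, I would conclude that the Helmholtz components $w_{\rm H}$ and $\Phi_{\rm H}(\cdot,z)$ agree in the unbounded component exterior to both $D$ and $\{z\}$. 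The exponentially decaying modified Helmholtz components must then be reconciled as well, leading to the conclusion that $w$ and $\Phi(\cdot,z)$ coincide on the exterior of $D \cup \{z\}$. But $w \in H^2$ is regular across $z$ whereas $\Phi(\cdot,z)$ has a singularity at $z \notin D$; this contradiction (obtained by letting a point approach $z$, where the energy norm of $\Phi(\cdot,z)$ blows up) rules out membership in the range.

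\textbf{Main obstacle.} The delicate step is the second direction, specifically the rigorous identification of $w$ with $\Phi(\cdot,z)$ from equality of far-field patterns alone. The biharmonic setting complicates the usual Rellich argument because the solution carries both a propagating and an evanescent part: one must first match the Helmholtz components via the classical uniqueness theory, then separately control the modified Helmholtz components. I expect the role of the clamped transmission eigenvalue assumption to surface here, guaranteeing that no nontrivial pair in the exterior and interior equations can satisfy the coupled clamped conditions, which is what ultimately forces the two solutions to agree and produces the desired singularity contradiction. Care must also be taken that $z$ may lie in the bounded complementary components or the unbounded component of $\mathbb{R}^2 \setminus \overline{D}$, though the simple connectedness of the exterior domain assumed in Section \ref{section 2} streamlines this case analysis.
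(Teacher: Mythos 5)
Your first direction is exactly the paper's argument: take $w=\Phi(\cdot,z)$ for $z\in D$, read off its traces on $\Gamma$ as the boundary data in \eqref{4.6}, and observe that its far-field pattern is $\Phi^{\infty}(\cdot,z)$. The second direction also begins the way the paper's does---equality of far-field patterns plus Rellich's lemma identifies the Helmholtz component $w_{\rm H}$ with a constant multiple of $\Phi_{\rm H}(\cdot,z)$ in the unbounded component of $\mathbb{R}^2\setminus(\overline{D}\cup\{z\})$---but the step you add after that is a genuine gap. You assert that the ``exponentially decaying modified Helmholtz components must then be reconciled as well,'' so that $w$ and $\Phi(\cdot,z)$ coincide near $z$, and you then derive the contradiction from the singularity of the full fundamental solution. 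This reconciliation cannot be justified: the far-field pattern is completely blind to the evanescent parts, which are $\mathcal{O}\big(\mathrm{e}^{-\kappa r}/\sqrt{r}\big)$, so $w^{\infty}=\Phi^{\infty}(\cdot,z)$ gives no relation at all between $w_{\rm M}$ and the modified Helmholtz part of $\Phi(\cdot,z)$; indeed $w_{\rm M}$ is determined by the (arbitrary) boundary data of $w$ on $\Gamma$ and will in general differ from it. As written, your contradiction rests on an equality you cannot establish.

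Fortunately the step is also unnecessary, and deleting it recovers the paper's proof: run the singularity argument on the Helmholtz components alone. Since $z\in\mathbb{R}^2\setminus\overline{D}$, the component $w_{\rm H}$ solves the Helmholtz equation in a neighborhood of $z$ and is therefore smooth and bounded there, whereas $\Phi_{\rm H}(x,z)=\frac{\mathrm{i}}{4}H_0^{(1)}(\kappa|x-z|)\to\infty$ as $x\to z$; this already contradicts $w_{\rm H}=c\,\Phi_{\rm H}(\cdot,z)$ obtained from Rellich's lemma. Separately, your expectation in the closing paragraph that the clamped transmission eigenvalue assumption \eqref{eigenvalue_problem} must surface in this lemma is mistaken: the paper's proof of Lemma \ref{rangethm} uses no spectral hypothesis whatsoever---that assumption enters only later, for the injectivity and dense range of the far-field operator $\mathcal{F}$ in Theorem \ref{FFanalysis}. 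Keeping the range characterization of $\mathcal{G}$ free of such assumptions is precisely one of the improvements the paper emphasizes over earlier work.
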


\begin{proof}
To prove the claim, we first assume that $z \in D$ and let  
$w_z(x) = \Phi(x,z) \in \mathbb{R}^2 \setminus \overline{D}$, where $\Phi(x,z)$ is the fundamental solution to the operator $\Delta^2-\kappa^4$ with a source at $z$. Therefore, we can set the boundary data 
$$(h^z_1, h^z_2)^\top = (w_z|_{\Gamma}, \partial_{\nu}w_z|_{\Gamma})^\top.$$
Notice that $\Phi(x,z)$ is smooth in the region $\mathbb{R}^2 \setminus \overline{D}$. By the trace theorem, we have $(h^z_1, h^z_2)^\top \in H^{3/2}(\Gamma) \times H^{1/2}(\Gamma)$. The far-field pattern of \(w\) is given by
\[
w_z^\infty(\hat{x}) = -\frac{1}{2\kappa^2} \frac{\text{e}^{\text{i}\pi/4}}{\sqrt{8\pi \kappa}} \text{e}^{-\text{i}\kappa \hat{x} \cdot z}, \quad \hat{x} \in \mathbb{S}^1, 
\]
which coincides with \(\Phi^\infty (\cdot \, , z) \). Therefore, we obtain 
\[
\mathcal{G}( h_1, h_2)^\top = w_z^\infty = \Phi^\infty (\cdot \, , z),
\]
which implies that $\Phi^\infty (\cdot \, , z) \in\mathrm{Range}(\mathcal{G})$.

Consider $z \in \mathbb{R}^2 \setminus \overline{D}$. Assume, for contradiction, that there is a pair $(h^z_1, h^z_2)^\top \in H^{3/2}(\Gamma) \times H^{1/2}(\Gamma)$ such that 
\[ 
\mathcal{G}(h^z_1, h^z_2)^\top = \Phi^\infty(\cdot, z).
 \]
Then, there exists a solution $w_z$ to \eqref{4.6} whose far-field pattern satisfies $w_z^\infty = \Phi^\infty (\cdot \, , z)$. By definition, the far-field pattern of $\Phi(\cdot \, , z)$ coincides, up to a multiplicative constant, with that of $\Phi_{{\text{H}}}(\cdot, z)$, the fundamental solution of the Helmholtz equation. By applying Rellich's lemma, we conclude that
\[
w_{z,\text{H}} = \Phi_{{\text{H}}}(\cdot \,, z) \quad \text{in} \,  \mathbb{R}^2 \setminus \overline{D},
\]
where $w_{z,\text{H}}$ is the Helmholtz component of the solution $w_z$. However, this leads to a contradiction since 
$|w_{z,\text{H}}(x)| < \infty$ and $|\Phi_{{\text{H}}}(x ,  z)| \to \infty$ as $x\to z$. Therefore, the claim follows.
\end{proof}

Consequently, the range characterization of the cavity $D$ in terms of the operator $\mathcal{G}$, as established in Lemma \ref{rangethm}, does not directly translate into an analogous characterization in terms of the range of the far-field operator $\mathcal{F}$. However, by the factorization of the far-field operator, we have  ${\rm Range}(\mathcal{F})\subset {\rm Range}(\mathcal{G})$, a property that is important in our subsequent analysis. To proceed, we now turn our attention to the operator $\mathcal{H}$ defined in \eqref{herglotzOp}.

\begin{lemma}\label{Herglotz_wave}
The Herglotz wave operator $\mathcal{H}: L^2(\mathbb S^1)\to H^{3/2}(\Gamma)\times H^{1/2}(\Gamma)$ defined by \eqref{herglotzOp} is compact and injective. 
\end{lemma}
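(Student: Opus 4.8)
The plan is to prove compactness and injectivity separately, as these are distinct in nature. For \textbf{compactness}, I would exploit the smoothing property of the Herglotz wave function. Observe that $v_g$ defined in \eqref{herglotzDef} is an entire solution to the Helmholtz equation $\Delta v_g + \kappa^2 v_g = 0$ in all of $\mathbb{R}^2$, so it is real-analytic and in particular $C^\infty$ in a neighborhood of $\Gamma$. First I would factor $\mathcal{H}$ through a space of higher regularity: writing $v_g|_{\Gamma}$ and $\partial_\nu v_g|_{\Gamma}$ as traces of a function that is smooth near $\Gamma$, I would argue that $\mathcal{H}$ maps $L^2(\mathbb{S}^1)$ boundedly into $H^{5/2}(\Gamma) \times H^{3/2}(\Gamma)$ (or indeed into $C^\infty$ traces), and then compose with the compact embedding $H^{5/2}(\Gamma) \times H^{3/2}(\Gamma) \hookrightarrow H^{3/2}(\Gamma) \times H^{1/2}(\Gamma)$, which is compact because $\Gamma$ is a compact manifold. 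Concretely, the map $g \mapsto v_g$ is bounded from $L^2(\mathbb{S}^1)$ into $H^2(B_R)$ for any ball $B_R \supset \overline{D}$ (and in fact into higher Sobolev spaces), after which interior elliptic regularity for the Helmholtz equation upgrades smoothness near $\Gamma$; the trace theorem then delivers the extra half-derivative needed to invoke compactness of the embedding.

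For \textbf{injectivity}, I would suppose $\mathcal{H}g = 0$, i.e. $v_g = 0$ and $\partial_\nu v_g = 0$ on $\Gamma$. Since $v_g$ is a solution of the Helmholtz equation in the interior domain $D$ with vanishing Cauchy data on $\Gamma$, Holmgren's uniqueness theorem (or unique continuation for the Helmholtz equation, using that $v_g$ is real-analytic) forces $v_g \equiv 0$ in $D$, and by analyticity $v_g \equiv 0$ throughout $\mathbb{R}^2$. The final step is to conclude $g = 0$ from the vanishing of the Herglotz kernel. This follows from the injectivity of the map $g \mapsto v_g$ on $L^2(\mathbb{S}^1)$: expanding $v_g$ via the Jacobi--Anger expansion, or equivalently recognizing that the far-field behavior of $v_g$ encodes $g$ through its Fourier coefficients on $\mathbb{S}^1$, one sees that $v_g \equiv 0$ implies all Fourier modes of $g$ vanish, hence $g = 0$ in $L^2(\mathbb{S}^1)$.

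The main obstacle I anticipate is the injectivity argument, specifically the passage from $v_g \equiv 0$ to $g \equiv 0$. This requires the standard but nontrivial fact that the Herglotz operator $g \mapsto v_g$ has trivial kernel, which ultimately rests on completeness properties of the functions $\{\mathrm{e}^{\mathrm{i}\kappa x \cdot d}\}_{d \in \mathbb{S}^1}$ and the explicit Jacobi--Anger decomposition into Bessel functions $J_n(\kappa |x|)\mathrm{e}^{\mathrm{i}n\theta}$. Care is needed to ensure that the vanishing of $v_g$ on an open set propagates to vanishing of all Fourier coefficients of $g$; the cleanest route is to use that $J_n(\kappa r)$ does not vanish identically in $r$, so each Fourier mode of $g$ must separately vanish. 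The compactness direction, by contrast, is relatively routine once the regularity gain is set up correctly, so I would present that first and devote the bulk of the argument to injectivity.
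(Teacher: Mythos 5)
Your proposal is correct and follows essentially the same route as the paper: compactness via the smoothness of Herglotz wave functions, the resulting gain of regularity in the trace spaces $H^{5/2}(\Gamma)\times H^{3/2}(\Gamma)$, and the compact Sobolev embedding; injectivity via vanishing Cauchy data on $\Gamma$, unique continuation for the Helmholtz equation, and the standard fact that $v_g\equiv 0$ forces $g=0$. The only difference is that you spell out the final step (Jacobi--Anger expansion showing the Fourier modes of $g$ vanish), which the paper leaves implicit.
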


\begin{proof}
To prove compactness, notice that the corresponding Herglotz wave functions are smooth solutions to the Helmholtz equation in $\mathbb{R}^2$. Therefore, we have that $v_g \in H^3_{loc}(\mathbb{R}^2)$, which implies  
\[
{\rm Range}(\mathcal{H}) \subset H^{5/2}(\Gamma) \times H^{3/2}(\Gamma). 
\]
The compactness of $\mathcal H$  then follows directly from standard Sobolev embedding theorems. 

To establish injectivity, suppose that $\mathcal{H}g = 0$. 
Then the associated Herglotz wave function $v_g$ satisfies $v_g = 0$ and $\partial_\nu v_g=0$ on $\Gamma$. 
By the unique continuation principle for solutions to the Helmholtz equation, it follows that $v_g = 0$ in $D$. This implies that $g = 0$, which proves that $\mathcal{H}$ is injective.
\end{proof}

Next, we present a result concerning a key analytical property of the far-field operator. Recall the factorization
\begin{align}\label{FF-factorization}
\mathcal{F} = - \mathcal{G} \mathcal{H},
\end{align}
where the operators $\mathcal{H}$ and $\mathcal{G}$ are defined in \eqref{herglotzOp} and \eqref{data2pattern}, respectively.  We also recall the reciprocity relation established in Theorem \ref{Reciprocity Thm}. Combining these observations with Theorem \ref{Herglotz_wave}, we arrive at the following result.

\begin{theorem}\label{FFanalysis}
The far-field operator $\mathcal{F}: L^2(\mathbb S^1)\to L^2(\mathbb S^1)$ associated with \eqref{1.3} is compact and injective with a dense range provided that the wavenumber $\kappa$ is not a clamped transmission eigenvalue for \eqref{eigenvalue_problem}. 
\end{theorem}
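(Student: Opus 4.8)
The plan is to establish the three properties of $\mathcal{F}$—compactness, injectivity, and dense range—by exploiting the factorization $\mathcal{F} = -\mathcal{G}\mathcal{H}$ together with the reciprocity relation. Compactness is immediate: since $\mathcal{H}: L^2(\mathbb{S}^1) \to H^{3/2}(\Gamma)\times H^{1/2}(\Gamma)$ is compact by Lemma \ref{Herglotz_wave}, and $\mathcal{G}$ is bounded (a consequence of the stability estimate for \eqref{4.6}), the composition $\mathcal{F} = -\mathcal{G}\mathcal{H}$ is compact as a bounded operator composed with a compact one.

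For injectivity, I would suppose $\mathcal{F}g = 0$ and aim to conclude $g = 0$. Since $\mathcal{F} = -\mathcal{G}\mathcal{H}$, the vanishing of $\mathcal{F}g$ means the radiating solution $w$ to \eqref{4.6} with boundary data $-\mathcal{H}g$ has zero far-field pattern. By Rellich's lemma applied to the Helmholtz component $w_{\mathrm{H}}$ (recalling that the far-field patterns of $w$ and $w_{\mathrm{H}}$ coincide, and using the exponential decay of the modified Helmholtz component), we get $w_{\mathrm{H}} = 0$ in the exterior, hence $w = 0$ there. The total field $v_g + w$ then satisfies the biharmonic equation in the exterior with zero Cauchy data contributions; writing $p = w_{\mathrm{H}}$-type exterior data against the interior field $q$ built from $v_g$, the vanishing reduces to a statement that $(p,q)$ solves the clamped transmission eigenvalue problem \eqref{eigenvalue_problem}. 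Since $\kappa$ is assumed not to be such an eigenvalue, this forces $v_g = 0$ on $\Gamma$ with matching normal derivative, i.e. $\mathcal{H}g = 0$, and then the injectivity of $\mathcal{H}$ from Lemma \ref{Herglotz_wave} yields $g = 0$.

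For the dense range, I would argue by duality, showing that the adjoint $\mathcal{F}^*$ is injective, which is equivalent to $\overline{\mathrm{Range}(\mathcal{F})} = L^2(\mathbb{S}^1)$. The key input is the reciprocity relation $u^\infty(-\hat{x},d) = u^\infty(-d,\hat{x})$ from Theorem \ref{Reciprocity Thm}, which relates $\mathcal{F}^*$ to $\mathcal{F}$ via a reflection operator: one checks that $\mathcal{F}^* = R\overline{\mathcal{F}}R$ (or a similar conjugation), where $R$ denotes the reflection $g(d)\mapsto \overline{g(-d)}$. Because this conjugation is unitary, the injectivity of $\mathcal{F}^*$ follows directly from the injectivity of $\mathcal{F}$ already established.

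The main obstacle will be the injectivity step, specifically making rigorous the reduction to the clamped transmission eigenvalue problem \eqref{eigenvalue_problem}. Extracting the correct interior/exterior pair $(p,q)$ in $H^1(\mathbb{R}^2\setminus\overline{D})\times H^1(D)$ from the vanishing of $\mathcal{F}g$ requires carefully tracking how the Helmholtz and modified Helmholtz components transmit across $\Gamma$ and identifying that the boundary conditions $p+q=0$, $\partial_\nu(p+q)=0$ emerge precisely from the coupled clamped data $-\mathcal{H}g$; the exponential decay condition on $p$ must also be verified from the radiation and decay behavior. I expect the reciprocity-based dense-range argument to be comparatively routine once the correct conjugation identity is pinned down, whereas the transmission-eigenvalue reduction demands the most care.
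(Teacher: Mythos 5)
Your overall strategy---compactness from the factorization $\mathcal{F} = -\mathcal{G}\mathcal{H}$, injectivity via reduction to the clamped transmission eigenvalue problem, and dense range via reciprocity and injectivity of the adjoint---is exactly the paper's, and your compactness and dense-range arguments are fine as sketched. However, your injectivity step contains a genuine error. From $\mathcal{F}g = 0$ and Rellich's lemma you may conclude that the Helmholtz (propagating) component of the scattered field vanishes, $w_{\mathrm{H}} = 0$ in $\mathbb{R}^2\setminus\overline{D}$, but this does \emph{not} give $w = 0$ there. The scattered field decomposes as $w = w_{\mathrm{H}} + w_{\mathrm{M}}$, and the modified Helmholtz (evanescent) component $w_{\mathrm{M}}$ contributes nothing to the far-field pattern---it decays exponentially---so Rellich's lemma says nothing about it, and in general $w_{\mathrm{M}} \neq 0$. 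Indeed, if $w = 0$ in the exterior were true, the clamped boundary conditions would immediately give $v_g = \partial_\nu v_g = 0$ on $\Gamma$, hence $v_g \equiv 0$ in $D$ by unique continuation and $g = 0$, with no use of the transmission-eigenvalue hypothesis at all; the fact that your argument would establish injectivity for \emph{every} wavenumber is a sign that it proves too much.

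The surviving evanescent component is precisely what the eigenvalue assumption is for. The correct pair is $(p,q) = (w_{\mathrm{M}}, v_g)$: the function $p = w_{\mathrm{M}}$ satisfies $\Delta p - \kappa^2 p = 0$ in $\mathbb{R}^2\setminus\overline{D}$ and decays exponentially, $q = v_g$ satisfies the Helmholtz equation in $D$, and the clamped conditions $w = -v_g$, $\partial_\nu w = -\partial_\nu v_g$ on $\Gamma$ combined with $w_{\mathrm{H}} = 0$ yield exactly the coupling $p + q = 0$ and $\partial_\nu(p+q) = 0$ on $\Gamma$ required in \eqref{eigenvalue_problem}. Your identification of $p$ with ``$w_{\mathrm{H}}$-type exterior data'' is therefore wrong: $p$ must be the modified Helmholtz component, not the (vanishing) Helmholtz one. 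With the correct pair, the assumption that $\kappa$ is not a clamped transmission eigenvalue forces $v_g = 0$ in $D$, hence $g = 0$. This is exactly how the paper argues; your sketch has the right destination, but the route as written breaks at this step.
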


\begin{proof}
It follows from the factorization \eqref{FF-factorization} and the compactness of $\mathcal{H}$ established in Theorem \ref{Herglotz_wave} that the far-field operator $\mathcal{F}$ is also compact.

To establish injectivity, suppose that $\mathcal{F}g=0$ for some $g \in L^2(\mathbb S^1)$. Since $\mathcal{F}g$ is the far-field pattern associated with \eqref{1.3} for the incident field $u^i = v_g$, where $v_g$ is the Herglotz wave function defined in \eqref{herglotzDef}, it follows that the Helmholtz component of the solution satisfies $u^s_{\text{H}} = 0$ in $\mathbb{R}^2 \setminus \overline{D}$. Consequently, the modified Helmholtz component of the total field, given by
\[
(p,q) = (u^s_{\text{M}}, v_g) \in H^1(\mathbb{R}^2 \setminus \overline{D}) \times H^1(D)
\]
solves the eigenvalue problem \eqref{eigenvalue_problem}. By the assumption that $\kappa$ is not a clamped transmission eigenvalue, it follows that $v_g=0$ in $D$, which in turn implies $g=0$. This proves the injectivity of $\mathcal{F}$.

Lastly, regarding the density of the range, we note that, by the reciprocity relationship established in Theorem \ref{Reciprocity Thm}, simple calculations yield
\[
(\mathcal{F}^* f)(d) = \int_{\mathbb{S}^1} \overline{u^{\infty}(\hat{x}, d)} f(-d) \mathrm{d}s(d). 
\]
It follows that $\mathcal{F}^*$ is injective if and only if $\mathcal{F}$ is injective. Since the far-field operator is linear and bounded, we have
\[
\overline{\mathrm{Range}(\mathcal{F})} = \mathrm{Null}(\mathcal{F}^*)^\perp.
\]
Therefore, the injectivity of $\mathcal{F}^*$ implies that $\overline{\mathrm{Range}(\mathcal{F})}=L^2(\mathbb S^1)$, establishing the density of the range and proving the claim.
\end{proof} 

The following result characterizes the behavior of the indicator function in the LSM. In essence, this result states that the approximate solution to the far-field equation \eqref{far_field_eqn} remains bounded when the sampling point lies inside the scatterer $D$. This property provides a practical computational criterion for reconstructing the scatterer from the far-field operator.

\begin{theorem}\label{Theorem_LSM}
Assume that the wavenumber $\kappa$ is not an eigenvalue of the clamped transmission eigenvalue problem given in \eqref{eigenvalue_problem}. Then, for any $z \in \mathbb{R}^2 \setminus \overline{D}$ and any sequence $\{g_z^\alpha\} \subset L^2(\mathbb{S}^1)$ satisfying
\begin{align}\label{LSM-1}
\lim_{\alpha \to 0} \|\mathcal{F} g_z^\alpha - \Phi^\infty(\cdot, z)\|_{L^2(\mathbb{S}^1)} = 0,
\end{align}
it follows that
\[
\lim_{\alpha \to 0} \|g_z^\alpha\|_{L^2(\mathbb{S}^1)} = \infty.
\]
\end{theorem}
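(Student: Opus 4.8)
The plan is to argue by contradiction, exploiting the factorization $\mathcal{F} = -\mathcal{G}\mathcal{H}$ from \eqref{FF-factorization} together with the range characterization of Lemma \ref{rangethm}. Suppose the conclusion fails, so that for some $z \in \mathbb{R}^2 \setminus \overline{D}$ there is a sequence $\{g_z^\alpha\}$ satisfying \eqref{LSM-1} for which $\|g_z^\alpha\|_{L^2(\mathbb{S}^1)}$ does not tend to infinity. Passing to a subsequence (not relabeled), I may then assume that $\|g_z^\alpha\|_{L^2(\mathbb{S}^1)}$ remains bounded. Since $L^2(\mathbb{S}^1)$ is a Hilbert space, a further subsequence converges weakly, say $g_z^\alpha \weakc g$ in $L^2(\mathbb{S}^1)$.

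The decisive step is to upgrade this weak convergence to strong convergence of the images under $\mathcal{F}$. By Lemma \ref{Herglotz_wave}, the Herglotz wave operator $\mathcal{H}$ is compact, so it carries the weakly convergent sequence $g_z^\alpha \weakc g$ to a strongly convergent one: $\mathcal{H}g_z^\alpha \to \mathcal{H}g$ in $H^{3/2}(\Gamma)\times H^{1/2}(\Gamma)$. Since the data-to-pattern operator $\mathcal{G}$ is bounded, a consequence of the stability estimate for \eqref{4.6}, applying $\mathcal{G}$ preserves strong convergence, and the factorization gives $\mathcal{F}g_z^\alpha = -\mathcal{G}\mathcal{H}g_z^\alpha \to -\mathcal{G}\mathcal{H}g = \mathcal{F}g$ in $L^2(\mathbb{S}^1)$.

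Comparing this limit with the hypothesis \eqref{LSM-1} and invoking uniqueness of limits, I obtain $\mathcal{F}g = \Phi^{\infty}(\cdot,z)$. Rewriting through the factorization, this reads $\mathcal{G}(-\mathcal{H}g) = \Phi^{\infty}(\cdot,z)$, so that $\Phi^{\infty}(\cdot,z) \in \mathrm{Range}(\mathcal{G})$. By the range characterization of Lemma \ref{rangethm}, this membership is equivalent to $z \in D$, which contradicts the assumption $z \in \mathbb{R}^2 \setminus \overline{D}$. The contradiction forces $\|g_z^\alpha\|_{L^2(\mathbb{S}^1)} \to \infty$, establishing the claim.

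I expect the main obstacle to be precisely the passage from weak to strong convergence, which rests entirely on the compactness of $\mathcal{H}$ recorded in Lemma \ref{Herglotz_wave}; without it, the boundedness of $\{g_z^\alpha\}$ would yield only weak convergence of $\mathcal{F}g_z^\alpha$, which is too weak to identify the limit with $\Phi^{\infty}(\cdot,z)$ and close the argument. The remaining ingredients, namely the boundedness of $\mathcal{G}$ and the invocation of Lemma \ref{rangethm}, are then routine once this strong convergence is secured.
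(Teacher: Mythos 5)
Your proposal is correct and follows essentially the same route as the paper: contradiction via boundedness, weak convergence of a subsequence, compactness of $\mathcal{H}$ to upgrade to strong convergence, boundedness of $\mathcal{G}$, and then the range characterization of Lemma \ref{rangethm} to conclude $z \in D$, a contradiction. If anything, your version is slightly tighter, since you work directly with an arbitrary sequence satisfying \eqref{LSM-1} rather than first constructing one by Tikhonov regularization as the paper does.
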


\begin{proof} 
We begin by assuming $z\in \mathbb{R}^2\setminus\overline{D}$. Since $\kappa$ is not an eigenvalue of the clamped transmission eigenvalue problem, the operator $\mathcal{F}$ is both compact and one-to-one in $L^2(\mathbb S^1)$, with a range that is dense in the space. Therefore, the far-field equation \eqref{far_field_eqn} admits an approximate solution, which may be obtained, for instance, via Tikhonov regularization, such that
\[
\|\mathcal{F}g_z^{\alpha_j}-\Phi^{\infty}(\cdot \, ,z)\|_{L^2(\mathbb S^1)} \leq \frac{1}{j} \quad \text{ as }\, \alpha_j \to 0,
\]
as $j \to \infty$. Define $g_j := g_z^{\alpha_j}$. We claim that the sequence $\{g_j\}$ cannot be bounded in $L^2(\mathbb{S}^1)$.

Suppose, for the sake of contradiction, that $\|g_j\|_{L^2(\mathbb{S}^1)}$ is bounded for all $j \in \mathbb{N}$. Then,  there exists a subsequence, still denoted by $g_j$, that converges weakly to some $g \in L^2(\mathbb{S}^1)$. Since  $\mathcal{H}$ is compact, we have $\mathcal{H}g_j \to  \mathcal{H}g $ in $H^{3/2}(\Gamma)\times H^{1/2}(\Gamma)$ as $j\to\infty$. As $\mathcal{G}$ is bounded, it follows that $\mathcal{G}\mathcal{H}g_j \to  \mathcal{G}\mathcal{H}g $ in $L^2(\mathbb S^1)$ as $j\to\infty$. Using the factorization \eqref{FF-factorization} and the convergence in \eqref{LSM-1}, we obtain 
\[
\mathcal{F}g= -\mathcal{G}\mathcal{H}g = \Phi^{\infty}(\cdot \, ,z),\quad z\in \mathbb{R}^2\setminus\overline{D}.
\]
This contradicts Theorem \ref{rangethm}, and therefore $\|g_z^{\alpha}\|_{L^2(\mathbb S^1)}\to\infty$ as $\alpha \to 0$.
\end{proof}

The LSM reformulates the problem of determining the shape of the cavity $D$ as the calculation of the indicator function $g^{\alpha}_z$, as described in Theorem \ref{Theorem_LSM}. The overall computational steps are summarized in Algorithm 1.

\begin{algorithm}[H]
\caption{Linear Sampling Method (LSM)}
\begin{algorithmic}[1]
\STATE Choose a cutoff parameter $\zeta>0$, and select a mesh $\mathcal{M}$ of sampling points in a region $\Omega$ that contains the cavity $D$;
\STATE For each sampling point $z\in \mathcal{M}$, compute an approximate solution $g^{\alpha}_z$  to the far-field equation \eqref{far_field_eqn} using Tikhonov regularization in conjunction with the Morozov discrepancy principle;
\STATE Classify the sampling point $z$ as inside the cavity $D$ if $1/\norm{g^{\alpha}_z}_{L^2(\mathbb S^1)}>\zeta$, and as outside $D$ if $1/\norm{g^{\alpha}_z}_{L^2(\mathbb S^1)}\leq \zeta$. 
\end{algorithmic}
\end{algorithm}

\section{The extended sampling method}\label{section_ESM}

In this section, we adapt the extended sampling method (ESM) to the inverse scattering problem of locating a clamped cavity based on far-field measurements from one or a few incident directions. Specifically, the goal is to determine the location of the cavity $D$ from $u^{\infty}(\Hat{x},d)$ not identically zero, where $(\Hat{x},d)\in \mathbb{S}^1\times \mathbb{S}^1_{\text{inc}}$ for one or multiple wavenumbers. Here, $ \mathbb{S}^1_{\text{inc}} \subsetneq \mathbb{S}^1$ denotes the set of incident directions. For example, if $\mathbb{S}^1_{\text{inc}}=\{d\}$, the scattering data consist of the far-field pattern generated by a single incident wave. In contrast, the full aperture case corresponds to the far-field data $u^{\infty}(\Hat{x},d)$  for all $\Hat{x},d\in\mathbb S^1$, i.e., $ \mathbb{S}^1_{\text{inc}}=\mathbb{S}^1$. In this case, the clamped cavity $D$ can be uniquely determined from the full aperture far-field data. The ESM is designed to recover the location of the clamped cavity using limited aperture data.

We begin by presenting the justification of the ESM for the inverse clamped scattering problem with a single incident wave. Consider the case where the far-field pattern $u^{\infty}(\Hat{x},d)$ is known corresponding to one fixed incident direction $d$. Let $B=B_R(0)\subset \mathbb R^2$ denote a sound-soft disk centered at the origin with sufficiently large radius $R$. For any sampling point $z \in \mathbb{R}^2$, we define
\[
B_z=B(z,R)\coloneqq \{x+z\,|\,x\in B,\, z\in\mathbb R^2\},
\]
and let 
$U^s_{B_z}(x ,\hat{y})$, with $x \in \mathbb{R}^2 \setminus \overline{B_z}$, denote the solution to acoustic sound soft scattering problem
\begin{align}\label{aux-SSprob}
\begin{dcases}
\Delta U_{B_z}^s+\kappa^2 U_{B_z}^s=0\quad \text{in }\mathbb R^2\setminus\overline B_z,\\
U_{B_z}^s=-\text{e}^{\text{i}\kappa x \cdot \hat{y}}\quad \text{on }\partial B_z,\\
\lim_{r\to\infty}\sqrt{r}\left(\partial_rU_{B_z}^s-{\rm i}\kappa U_{B_z}^s\right)=0.
\end{dcases}
\end{align}
By applying the method of separation of variables (see, e.g., \cite{Liu_2018}) to the case of a disk centered at the origin, the far-field pattern of $U^s_{B}(x ,\hat{y})$ is given by
\begin{align}\label{aux-ffpattern1}
 U_B^{\infty}(\hat{x},\hat{y}) = -\text{e}^{-\text{i} \pi/4} \sqrt{\frac{2}{\pi \kappa}} \left[ \frac{J_0(\kappa R)}{H^{(1)}_0(\kappa R)} +2\sum_{n=1}^{\infty} \frac{J_n(\kappa R)}{H^{(1)}_n(\kappa R)} \cos \big(n (\theta_x-\theta_y) \big) \right], \quad \Hat{x}\in\mathbb S^1,
\end{align}
where $J_n$ and $H^{(1)}_n$ refer, respectively, to the Bessel and Hankel functions of the first kind of order $n$, and $\theta_x, \theta_y$ are the observation angle for $\Hat{x}$ and the incident angle for $\Hat{y}$, respectively. By \cite{Liu_2018}, the far-field pattern of the solution $U^s_{B_z}$ for a disk centered at $z \in \mathbb{R}^2$ can be expressed as
\begin{align}\label{aux-ffpattern2}
U_{B_z}^{\infty}(\Hat{x},\Hat{y})=\text{e}^{{\rm i}\kappa z\cdot (\Hat{y}-\Hat{x})}U_B^{\infty}(\Hat{x},\Hat{y}),\quad \Hat{x}\in\mathbb S^1.
\end{align}

To construct a sampling method using the precomputed far-field pattern $U_{B_z}^{\infty}(\Hat{x},\Hat{y})$ together with the measured far-field data $u^\infty (\hat{x} , d)$, let $\mathcal{M}$  denote a sampling domain containing the cavity $D$. Note that $U_{B_z}^{\infty}(\Hat{x},\Hat{y})$, obtained from \eqref{aux-ffpattern1}--\eqref{aux-ffpattern2}, is independent of the actual scatterer $D$. For each sampling point $z \in \mathcal{M}$, let $ \mathcal{F}_{B_z} : L^2(\mathbb S^1)\to L^2(\mathbb S^1)$ be the modified far-field operator, defined by 
\begin{align*}
\mathcal{F}_{B_z}g(\Hat{x})= \int_{\mathbb S^1}U_{B_z}^{\infty}(\Hat{x},\Hat{y})g(\Hat{y}) \text{d}s(\Hat{y}),\quad \Hat{x}\in \mathbb S^1, 
\end{align*}
where $U_{B_z}^{\infty}(\Hat{x},\Hat{y})$ denotes the far-field pattern corresponding to the scattering of a sound-soft disk
$B_z$ by an incident plane wave from direction $\hat{y}$. Using the modified far-field operator $\mathcal{F}_{B_z}$, we formulate the modified far-field equation associated with the ESM. Specifically, for a fixed $d\in\mathbb S^1$, we seek a function $g\in L^2(\mathbb S^1)$ satisfying
\begin{align}\label{ESM_FF_eqn}
(\mathcal{F}_{B_z}g)(\Hat{x})=u^{\infty}(\Hat{x},d),\quad \Hat{x} \in\mathbb S^1. 
\end{align}

It is advantageous to place the given data on the right-hand side of the far-field equation. To analyze this equation, we introduce auxiliary operators that enable the factorization of the modified far-field operator. Following \cite{Liu_2018}, we define $\mathcal G_{B_z} : H^{1/2}(\partial B_z)\to L^2(\mathbb S^1)$ by 
\begin{align*}
\mathcal{G}_{B_z}f\coloneqq W^{\infty},
\end{align*}
where $W^{\infty}$ denotes the far-field pattern of $W^s$ that satisfies 
\begin{align*}
\begin{dcases}
\Delta W^s+\kappa^2 W^s=0\quad \text{in }\mathbb R^2\setminus\overline B_z,\\
W^s=f\quad \text{on }\partial B_z,
\end{dcases}
\end{align*}
together with the Sommerfeld radiation condition. This definition is equivalent to replacing the plane wave $-\text{e}^{\text{i}\kappa x \cdot \hat{y}}$ in \eqref{aux-SSprob} with a general boundary function $f \in H^{1/2}(\partial B_z)$.

We also define the Herglotz operator $\mathcal{H}_{B_z} : L^2(\mathbb S^1)\to H^{1/2}(\partial B_z)$ associated with \eqref{aux-SSprob} as
\begin{align*}
\mathcal{H}_{B_z}g = v_{g_z}|_{\partial B_z}, 
\end{align*}
where $v_g$ is the Herglotz wave function with density $g$, defined in \eqref{herglotzDef}. Then, as in the previous section, the modified far-field operator admits the factorization
\begin{align*}
\mathcal{F}_{B_z}=-\mathcal G_{B_z}\mathcal{H}_{B_z}.
\end{align*}
We note that this modified far-field operator is associated with an acoustic scattering problem, which differs fundamentally from the biharmonic scattering problem under consideration here.

The main reason for considering using the ESM method which was developed for acoustic scattering is due to the fact that $u^{\infty}(\Hat{x},d)=u_{\text{H}}^{\infty}(\Hat{x},d)$. This implies that even though the given far-field data corresponds to a biharmonic scattering problem, we only retain the propagating part of the solution in the far-field. 

The following theorem provides a theoretical justification for how the solution of equation \eqref{ESM_FF_eqn} can be used to characterize the location of the clamped cavity $D$.

\begin{theorem}\label{ESM_THM}
Let $B_z$ be a disk of radius $R$ centered at a sampling point $z$, and let $D$ be a clamped cavity in a thin elastic plate. Suppose that $\kappa^2$ is not a Dirichlet eigenvalue of $-\Delta$ in $B_z$. Then, for any $d \in \mathbb{S}^1$, the modified far-field equation admits the following properties:
\begin{enumerate}
\item[(i)] If $D\subset B_z$, then for every $\epsilon>0$, there exists a function $g_z^{\alpha}\in L^2(\mathbb S^1)$ such that
\begin{align}\label{ESM_FF_ineq}
\displaystyle\lim_{\alpha\to 0} \norm{\mathcal F_{B_z}g_z^{\alpha}-u^{\infty}( \cdot \,  , d)}_{L^2(\mathbb S^1)} = 0
\end{align}
and the associated Herglotz wave function $v_{g_z^{\alpha}}$ converges to $v \in H^1(B_z)$ that is a solution to the Helmholtz equation in $B_z$, where $v=-u_{\text{H}}^s ( \cdot \,  , d) $ on $\partial B_z$, as $\alpha\to 0$.
\item[(ii)] If $D\cap B_z=\emptyset$, then for any $g_z^{\alpha}$ satisfying \eqref{ESM_FF_ineq}, it holds that
\begin{align}\label{ESM_FF_2}
\lim_{\alpha\to 0}\norm{g_z^{\alpha}}_{L^2(\mathbb S^1)}&=\infty.
\end{align}
\end{enumerate}
\end{theorem}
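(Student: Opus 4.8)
The plan is to exploit the factorization $\mathcal{F}_{B_z} = -\mathcal{G}_{B_z}\mathcal{H}_{B_z}$ together with the identity $u^\infty(\cdot,d) = u^\infty_{\mathrm H}(\cdot,d)$, so that everything reduces to whether the Helmholtz far-field pattern $u^\infty_{\mathrm H}(\cdot,d)$ lies in $\mathrm{Range}(\mathcal{G}_{B_z})$. For part (i), since $D\subset B_z$ we have $\mathbb{R}^2\setminus\overline{B_z}\subset\mathbb{R}^2\setminus\overline{D}$, so the Helmholtz component $u^s_{\mathrm H}(\cdot,d)$ of the true scattered field is already a radiating Helmholtz solution in $\mathbb{R}^2\setminus\overline{B_z}$. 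Setting $f := u^s_{\mathrm H}(\cdot,d)|_{\partial B_z}\in H^{1/2}(\partial B_z)$ gives $\mathcal{G}_{B_z}f = u^\infty_{\mathrm H}(\cdot,d) = u^\infty(\cdot,d)$, so the data lies exactly in the range of $\mathcal{G}_{B_z}$. I would then let $v$ be the unique solution of the interior Dirichlet problem $\Delta v + \kappa^2 v = 0$ in $B_z$ with $v = -f$ on $\partial B_z$ — unique precisely because $\kappa^2$ is not a Dirichlet eigenvalue of $-\Delta$ in $B_z$ — and approximate it in $H^1(B_z)$ by Herglotz wave functions $v_{g_z^\alpha}$ (density of Herglotz functions among interior Helmholtz solutions). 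Taking traces, $\mathcal{H}_{B_z}g_z^\alpha\to -f$ in $H^{1/2}(\partial B_z)$, and boundedness of $\mathcal{G}_{B_z}$ yields $\mathcal{F}_{B_z}g_z^\alpha = -\mathcal{G}_{B_z}\mathcal{H}_{B_z}g_z^\alpha\to\mathcal{G}_{B_z}f = u^\infty(\cdot,d)$, which is \eqref{ESM_FF_ineq}, with $v_{g_z^\alpha}\to v$ and $v=-u^s_{\mathrm H}(\cdot,d)$ on $\partial B_z$ as claimed.

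For part (ii) I would argue by contradiction. Suppose $D\cap B_z=\emptyset$ yet there is a sequence $g_z^\alpha$ satisfying \eqref{ESM_FF_ineq} with $\|g_z^\alpha\|_{L^2(\mathbb{S}^1)}$ bounded. Passing to a weakly convergent subsequence $g_z^\alpha\rightharpoonup g$ and using the compactness of the Herglotz operator $\mathcal{H}_{B_z}$ (so that $\mathcal{H}_{B_z}g_z^\alpha\to\mathcal{H}_{B_z}g$ strongly) together with the boundedness of $\mathcal{G}_{B_z}$, one passes to the limit to obtain $\mathcal{F}_{B_z}g = u^\infty(\cdot,d)$, i.e. $u^\infty(\cdot,d) = \mathcal{G}_{B_z}(-\mathcal{H}_{B_z}g)\in\mathrm{Range}(\mathcal{G}_{B_z})$. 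Hence there is a radiating Helmholtz solution $W^s$ in $\mathbb{R}^2\setminus\overline{B_z}$ whose far-field pattern equals $u^\infty_{\mathrm H}(\cdot,d)$. Since $W^s$ and $u^s_{\mathrm H}(\cdot,d)$ are both radiating with the same far-field, Rellich's lemma and unique continuation give $W^s = u^s_{\mathrm H}(\cdot,d)$ on the unbounded component of $\mathbb{R}^2\setminus\overline{B_z\cup D}$; because $\overline{D}\cap\overline{B_z}=\emptyset$, $W^s$ is real-analytic in a neighborhood of $\overline{D}$, so $u^s_{\mathrm H}(\cdot,d)$, and therefore the total Helmholtz component $u_{\mathrm H}=u^i+u^s_{\mathrm H}$, extends across $\Gamma$ as a regular Helmholtz solution in $D$.

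The main obstacle is converting this over-regularity into a genuine contradiction, and this is where the coupling between the two components enters. The plane-wave incidence is purely propagating ($u^i_{\mathrm M}=0$), so on $\Gamma$ the clamped conditions read $u_{\mathrm H}+u^s_{\mathrm M}=0$ and $\partial_\nu(u_{\mathrm H}+u^s_{\mathrm M})=0$. Setting $p := u^s_{\mathrm M}$ in $\mathbb{R}^2\setminus\overline{D}$ and $q := u_{\mathrm H}|_D$, the pair $(p,q)$ satisfies $\Delta p-\kappa^2 p=0$ in $\mathbb{R}^2\setminus\overline{D}$, $\Delta q+\kappa^2 q=0$ in $D$, $p+q=0$ and $\partial_\nu(p+q)=0$ on $\Gamma$, with $p$ decaying exponentially; that is, $(p,q)$ solves the clamped transmission eigenvalue problem \eqref{eigenvalue_problem}. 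Ruling out a nontrivial such pair then forces $u^s_{\mathrm M}=0$ in the exterior and $u_{\mathrm H}=0$ in $D$, whence the vanishing Cauchy data of $u_{\mathrm H}$ on $\Gamma$ and unique continuation give $u_{\mathrm H}\equiv 0$ in $\mathbb{R}^2\setminus\overline{D}$, i.e. $u^s_{\mathrm H}=-u^i$, which is impossible since $u^s_{\mathrm H}$ is radiating while $u^i$ is not, contradicting $u^\infty(\cdot,d)\not\equiv 0$. I expect the delicate point to be precisely this last step: the clean contradiction appears to need $\kappa$ to avoid the clamped transmission eigenvalues of \eqref{eigenvalue_problem} in addition to the stated Dirichlet-eigenvalue condition on $B_z$ (the latter being what guarantees unique solvability of the interior problem in part (i) and the injectivity and dense range of $\mathcal{F}_{B_z}$ used in the regularized solution). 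Accordingly I would either incorporate this assumption explicitly or verify directly that the construction above cannot produce a nontrivial eigenpair $(p,q)$.
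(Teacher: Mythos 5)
Your part (i) is correct and is essentially the paper's argument: set $f = u_{\text{H}}^s(\cdot\,,d)|_{\partial B_z}$ so that $\mathcal{G}_{B_z}f = u_{\text{H}}^{\infty}(\cdot\,,d)=u^{\infty}(\cdot\,,d)$, use the Dirichlet-eigenvalue hypothesis to get density of the Herglotz traces on $\partial B_z$, and conclude via boundedness of $\mathcal{G}_{B_z}$ together with well-posedness of the interior Dirichlet problem in $B_z$.

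Part (ii), however, contains a genuine gap, and it is exactly the point you flagged yourself. After Rellich's lemma you pursue the analytic extension of $u_{\text{H}}^s(\cdot\,,d)$ across $\Gamma$ into $D$, which forces you to confront the clamped boundary conditions and hence the transmission eigenvalue problem \eqref{eigenvalue_problem}; you then conclude that you cannot close the argument without assuming $\kappa$ avoids the clamped transmission eigenvalues --- an assumption the theorem does not make, and which the paper explicitly advertises the ESM as \emph{not} needing. The paper's proof never touches $\Gamma$ at all. The idea you missed: since $D\cap B_z=\emptyset$, the radiating solution $V^s$ produced by the range membership $u^{\infty}(\cdot\,,d)\in\mathrm{Range}(\mathcal{G}_{B_z})$ solves the Helmholtz equation in all of $\mathbb{R}^2\setminus\overline{B_z}$, in particular throughout $\overline{D}$ --- the cavity and its boundary conditions are simply invisible to $V^s$. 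At the same time, $u_{\text{H}}^s(\cdot\,,d)$ solves the Helmholtz equation inside $B_z$ (again because $D\cap B_z=\emptyset$). Since Rellich's lemma gives $V^s=u_{\text{H}}^s(\cdot\,,d)$ in $\mathbb{R}^2\setminus(\overline{D}\cup\overline{B_z})$, the glued function
\[
W^s \coloneqq
\begin{dcases}
V^s & \text{in } \mathbb{R}^2\setminus B_z,\\
u_{\text{H}}^s(\cdot\,,d) & \text{in } B_z,
\end{dcases}
\]
is an $H^1_{\mathrm{loc}}$ radiating solution of the Helmholtz equation in \emph{all} of $\mathbb{R}^2$, and therefore vanishes identically. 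Hence $u_{\text{H}}^s(\cdot\,,d)=0$ in $B_z$, and by unique continuation $u_{\text{H}}^s(\cdot\,,d)\equiv 0$ in $\mathbb{R}^2\setminus\overline{D}$, so that $u^{\infty}(\cdot\,,d)=u_{\text{H}}^{\infty}(\cdot\,,d)\equiv 0$.

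The contradiction is thus with the standing hypothesis of the ESM section that the measured data $u^{\infty}(\cdot\,,d)$ is not identically zero --- it is this nontriviality of the data, not any transmission-eigenvalue condition, that closes the proof. The degenerate scenario you were worried about (where $\kappa$ is a clamped transmission eigenvalue and the plane wave excites an eigenpair, forcing $u_{\text{H}}^s\equiv 0$) is precisely the scenario in which the data vanishes identically, and it is excluded by that hypothesis rather than by any spectral assumption. So your proposal, as written, proves a weaker statement than the theorem; replacing your extension-into-$D$ step with the gluing-across-$\partial B_z$ step repairs it under the stated hypotheses.
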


\begin{proof}
First, we prove that \eqref{ESM_FF_ineq} holds. Assume that $D\subset B_z$ and define $f = u_{\text{H}}^s ( \cdot \,  , d) |_{\partial B_z}$, the trace of the propagating part of the scattered field, which belongs to $H^{1/2}(\partial B_z)$ since $D\subset B_z$. By the definition of $\mathcal G_{B_z}$, the function $f$ satisfies 
$$\mathcal G_{B_z}f=u_{\text{H}}^{\infty} ( \cdot \,  , d) =u^{\infty}( \cdot \,  , d).$$
This follows from the fact that $u_{\text{H}}^s( \cdot \,  , d)$ solves \eqref{aux-SSprob} with Dirichlet data $f = u_{\text{H}}^s( \cdot \,  , d) |_{\partial B_z}$. Hence, for any $d \in \mathbb{S}^1$, we have $u^{\infty} ( \cdot \,  , d) \in \mathrm{Range} (\mathcal{G}_{B_z})$. 

Since $\kappa^2$ is not a Dirichlet eigenvalue of $-\Delta$ in $B_z$, it follows from \cite[Lemma 3.1]{Liu_2018} that the operator $\mathcal{H}_{B_z}$ has dense range. Therefore, there exists $g_z^{\alpha}\in L^2(\mathbb S^1)$ such that
\begin{align*}
\lim_{\alpha\to 0} \norm{\mathcal H_{B_z}g_z^{\alpha}+u_{\text{H}}^s ( \cdot \,  , d) }_{H^{1/2}(\partial B_z)} = 0.
\end{align*}
Consequently,
\begin{align*}
\norm{\mathcal F_{B_z}g_z^{\alpha}-u^{\infty} ( \cdot \,  , d) }_{L^2(\mathbb S^1)}&= \norm{\mathcal G_{B_z}(-\mathcal H_{B_z})g_z^{\alpha}-\mathcal G_{B_z}u_{\text{H}}^s( \cdot \,  , d)}_{L^2(\mathbb S^1)}\\
&\leq \norm{\mathcal G_{B_z}}\cdot \norm{\mathcal H_{B_z}g_z^{\alpha}+u_{\text{H}}^s( \cdot \,  , d)}_{H^{1/2}(\partial B_z)}.
\end{align*}
By the well-posedness of the Dirichlet problem for the Helmholtz equation in $B_z$, we have that $v_{g_z^\alpha}$ converges in $H^1(B_z)$ to the unique solution $v \in H^1(B_z)$ of the Helmholtz equation with Dirichlet boundary condition $v=-u_{\text{H}}^s ( \cdot \,  , d)$ on $\partial B_z$.

Next, we prove \eqref{ESM_FF_2} by contradiction. Suppose that $D\cap B_z=\emptyset$ and that the modified far-field equation \eqref{ESM_FF_eqn} has an approximate solution $g_z^{\alpha}\in L^2(\mathbb S^1)$ satisfying $\|g_z^{\alpha}\|_{L^2(\mathbb S^1)}<\infty$. Then, there is a sequence of positive numbers such at $\alpha_n \to 0$ as $n\to \infty$ where the sequence $\{g_z^{\alpha_n}\}$ converges weakly to some $g_z \in L^2(\mathbb{S}^1)$. Since $\kappa^2$ is not a Dirichlet eigenvalue, the operator $\mathcal{F}_{B_z}$ has dense range; thus, there exists a density $g_z^{\alpha}$ for which \eqref{ESM_FF_ineq} holds. Consequently, $v_{g_z}^{\alpha_n}$ converges weakly to $v_{g_z}$ in $H^1_{\rm loc}(\mathbb R^2)$ as $\alpha_n \to 0$ and $n \to \infty$.

By well-posedness of the exterior sound-soft scattering problem, there exists a unique radiating solution $V^s\in H_{\text{loc}}^1(\mathbb R^2\setminus\overline{B_z})$ satisfying
\begin{align*}
\begin{dcases}
\Delta V^s+\kappa^2V^s=0\quad \text{in }\mathbb R^2\setminus\overline{B_z},\\
V^s|_{\partial B_z}=-v_{g_z},\\
\lim_{r\to\infty}\sqrt{r}(\partial_rV^s-{\rm i}\kappa V^s)=0.
\end{dcases}
\end{align*}
Let $V^{\infty}$ denote the far-field pattern of $V^s$. Because the modified far-field operator is compact,
\[
\mathcal{F}_{B_z}g_z^{\alpha_n} \to \mathcal{F}_{B_z}g_z = - \mathcal{G}_{B_z}(\mathcal H_{B_z} )g_z = V^{\infty}  \quad \text{ as }\, n \to \infty.
\]
From \eqref{ESM_FF_eqn}, we deduce $- \mathcal{G}_{B_z}v_{g_z}|_{\partial B_z} = V^{\infty}$, which implies 
$V^{\infty}=u^{\infty}( \cdot \,  , d)=u^{\infty}_{\text{H}}( \cdot \,  , d).$

By Rellich's lemma, it follows that
$V^s=u_{\text{H}}^s ( \cdot \,  , d)$ in $\mathbb R^2\setminus(\overline{D}\cup \overline{B_z})$. Define the function $W^s$ by
\begin{align*}
W^s \coloneqq \begin{dcases}
V^s\quad \quad\quad \text{ in }\mathbb R^2\setminus B_z,\\
u_{\text{H}}^s ( \cdot \,  , d) \quad \text{ in }B_z.
\end{dcases}
\end{align*}
Then $W^s\in H^1_{\rm loc}(\mathbb R^2)$ is a radiating solution to the Helmholtz equation in all of $\mathbb R^2$.  By the uniqueness of radiating solutions, we deduce that $W^s=0$. In particular, this implies $u_{\text{H}}^s=0$ in 
$B_z$, and by unique continuation, $u_{\text{H}}^s ( \cdot \,  , d) =0$ in $\mathbb R^2$. However, this contradicts the assumption that $u_{\text{H}}^{\infty} ( \cdot \,  , d)$ is not identically zero. 
\end{proof}

It is worth noting that, since we have control over the radius $R$ of the sampling disk, it can be selected so that $\kappa^2$ does not coincide with any Dirichlet eigenvalue of $-\Delta$ on $B_z$. Consequently, the ESM is applicable for all wavenumbers $\kappa$. In contrast, the LSM requires excluding wavenumbers for which the clamped transmission problem \eqref{eigenvalue_problem} admits nontrivial solutions.

An important step in implementing the ESM is selecting the radius $R$ of the sampling disk $B_z$. To this end, we adopt the multilevel ESM strategy proposed in \cite{Liu_2018} to determine an appropriate value of $R$, as summarized in Algorithm 2.

\begin{algorithm}[H]
\caption{Multilevel Extended Sampling Method (ESM)}
\begin{algorithmic}[1]
\STATE Choose an initial sampling radius $R$ sufficiently large, and construct a coarse sampling grid $\mathcal{M}$ such that the spacing between adjacent sampling points is approximately $R$;
\STATE For each $z \in \mathcal{M}$, compute the far-field data $U_{B_z}^{\infty}(\Hat{x},\Hat{y})$ for all $\Hat{x}, \Hat{y} \in \mathbb{S}^1$;
\STATE Using the ESM procedure, identify the global minimizer $z_0 \in \mathcal{M}$ of $\norm{g_z^{\alpha}}_{L^2(\mathbb{S}^1)}$ and use $B_{z_0}$ as an initial approximation of the support of $D$;
\FOR{$j=1,2,\dots$} 
\STATE Set $R_j = R / 2^j$ and construct a finer sampling grid $\mathcal{M}_j$ with spacing approximately $R_j$;
\STATE Determine the minimizer $z_j \in \mathcal{M}_j$ of $\norm{g_z^{\alpha}}_{L^2(\mathbb{S}^1)}$. If $z_j \notin B_{z_{j-1}}$, terminate the iteration and proceed to Step 8;
\ENDFOR
\STATE Return $z_{j-1}$ as the estimated location and $B_{z_{j-1}}$ as the approximate support of the clamped cavity $D$.
\end{algorithmic}
\end{algorithm}

In practical scenarios, far-field measurements can typically be collected at finitely many incident directions, 
\begin{align*}
u^{\infty}(\cdot \, ,d_j), \quad d_j\in \{d_1,d_2,\dots, d_J\}= \mathbb{S}^1_{\text{inc}} \subseteq \mathbb{S}^1.
\end{align*}
For each incident direction $d_j$, we consider the discrete system of equations
\begin{align}\label{approx_ESM_FF_eqn}
\left(\mathcal{F}_{B_z} g \right)(\cdot\,; d_j) = u^{\infty}(\cdot, d_j), \quad j = 1, \dots, J.
\end{align}
Denote by $g_z^{\alpha}(\hat{x}; d_j)$ the regularized solution to \eqref{approx_ESM_FF_eqn} corresponding to the incident direction $d_j$. Then, the indicator function $\mathcal{I}(z)$ for multiple incident directions $z \in \mathcal{M}$ is defined as
\begin{align}\label{ESM_ind_1}
z \mapsto \sum_{j=1}^J \| g_z^{\alpha}(\cdot; d_j) \|_{L^2(\mathbb{S}^1)}, \quad z \in \mathcal{M}.
\end{align}

Similarly, we can incorporate multiple frequency data. Let $g_z^{\alpha}(\cdot; d_j, \kappa_\ell)$ denote the regularized solution to the modified far-field equation at frequency $\kappa_\ell \in \{\kappa_\ell\}_{\ell=1}^L \subset \mathbb{R}_{>0}$. Then, the corresponding indicator function $\mathcal{I}(z)$ is defined by the mapping
\begin{align}\label{ESM_ind_2}
 z \longmapsto \sum_{\ell=1}^L\sum_{j=1}^J \norm{g_z^{\alpha} (\cdot \, ; d_j , \kappa_\ell)}_{L^2(\mathbb{S}^1)}, \quad z\in \mathcal M. 
\end{align}
This gives a method that detects the location of the scatterer from reduced far-field data.

\section{Numerical experiments}\label{section 5}

This section provides several numerical experiments illustrating the performance of the LSM and the ESM in solving the two-dimensional inverse biharmonic scattering problem for a cavity embedded in a thin elastic plate. In our simulations, the boundary of the model cavity is described parametrically as
\begin{align*}
    x(t)=(x_1(t),x_2(t))^\top,\quad 0\leq t<2\pi.
\end{align*}
The synthetic far-field data are generated by solving the corresponding direct scattering problems using the double-single layer potential boundary integral equation method introduced in \cite{DongHeping2024ANBI}. The exact parametric representations of the cavity boundaries are listed in Table~\ref{Table_1} and illustrated in Figure~\ref{fig:model_shapes}.

\begin{figure}[h]
	\centering	
	\subfigure[Apple-shaped cavity]{\includegraphics[width=0.3\textwidth]{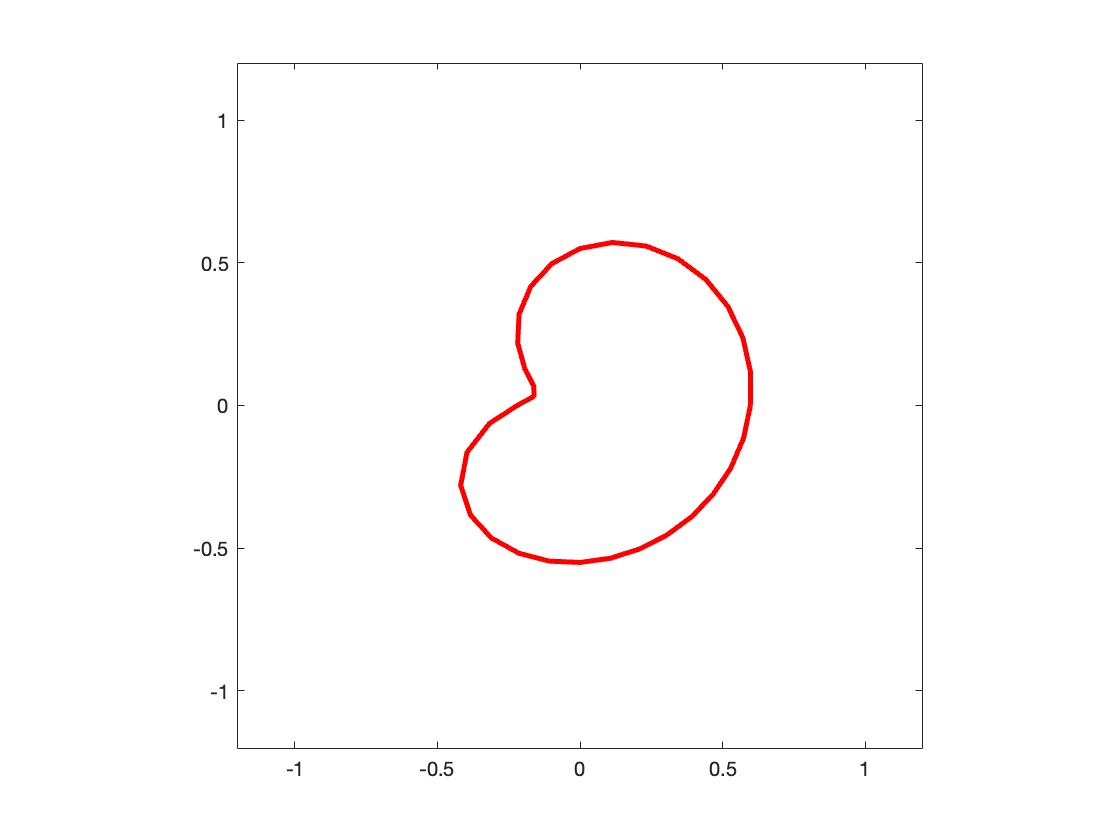}}\quad
	\subfigure[Peanut-shaped cavity]{\includegraphics[width=0.3\textwidth]{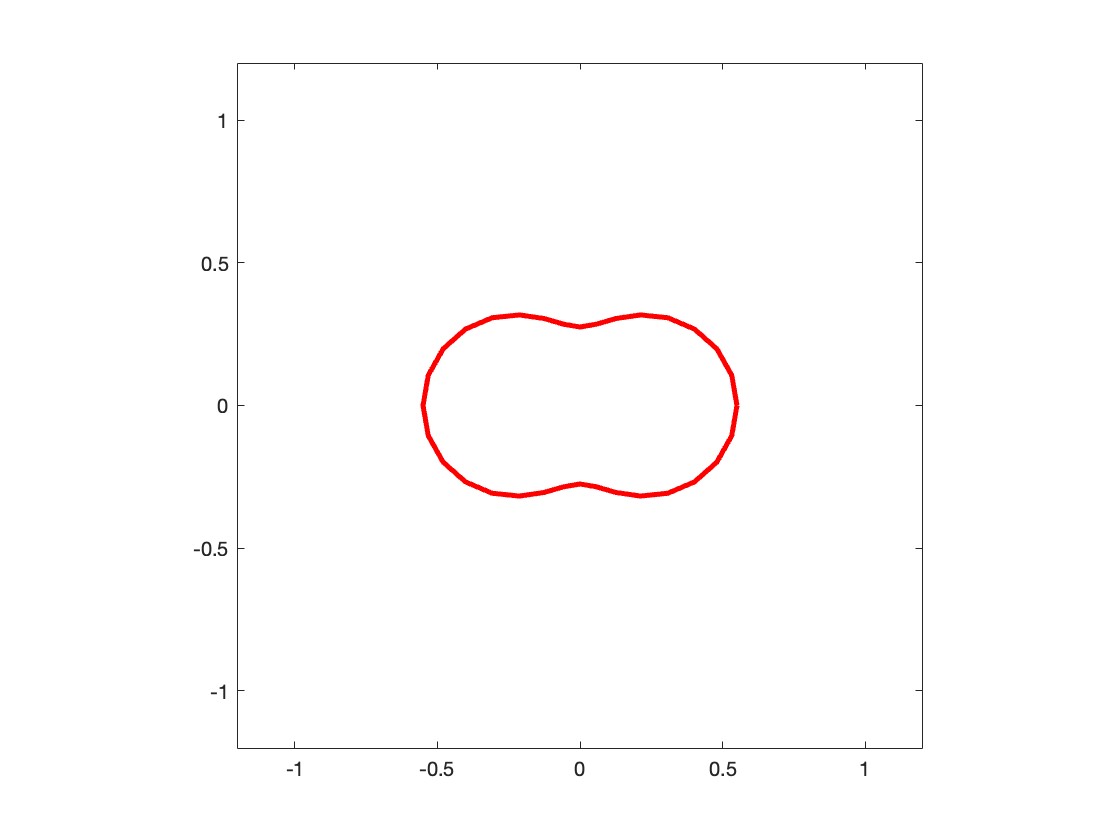}}
    \quad
	\subfigure[Peach-shaped cavity]{\includegraphics[width=0.3\textwidth]{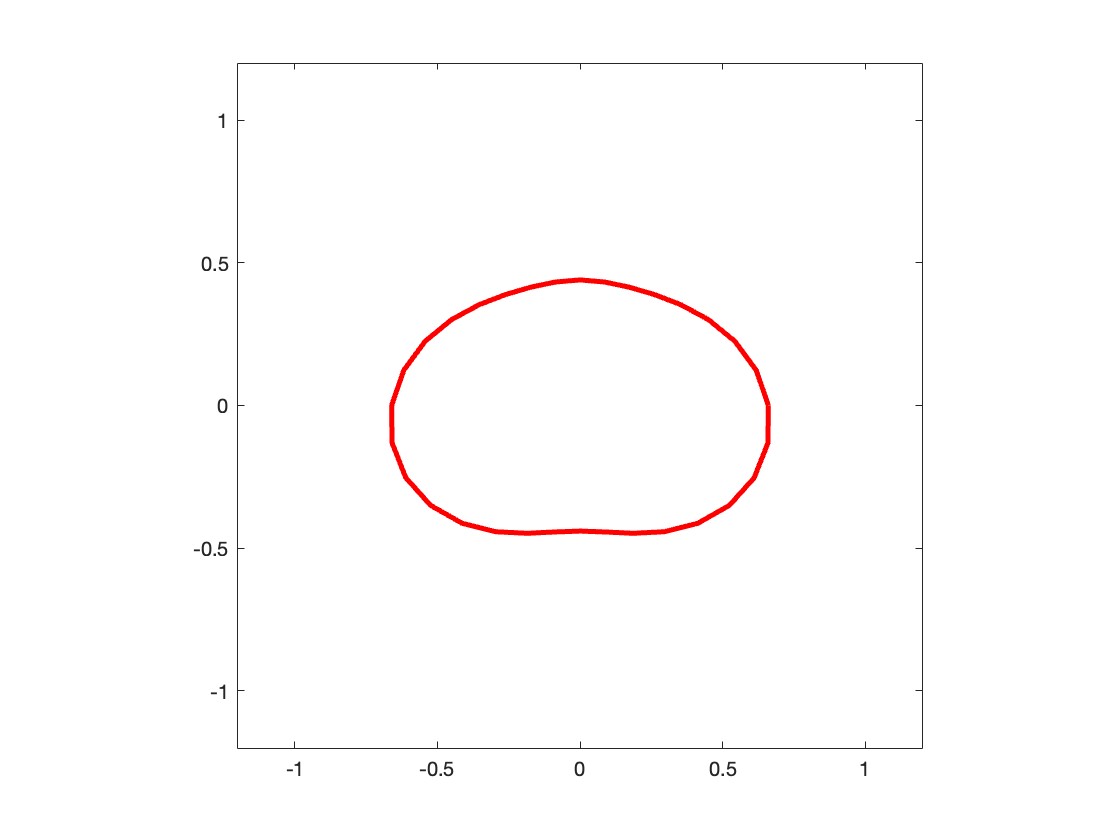}}
	\caption{Model cavities}\label{fig:model_shapes}
\end{figure}

\begin{table}[h!]
	\caption{The parameterized boundary curves.}
	\begin{tabular}{lll}
		\toprule[1pt]
		Boundary type           & Parameterization\\
		\midrule  
		Apple-shaped   & $x(t)=\displaystyle\frac{0.55(1+0.9\cos{t}+0.1\sin{2t})}{1+0.75\cos{t}}(\cos{t}, \sin{t}), \quad t\in [0,2\pi]$  
		\vspace{1.5ex}\\ 
		Peanut-shaped  & 
		$x(t)=0.275\sqrt{3\cos^2{t}+1}(\cos{t},\sin{t}), \quad
		t\in[0,2\pi]$
		\vspace{1.5ex} \\ 
		Peach-shaped  & 
		$x(t)=0.22(\cos^2{t}\sqrt{1-\sin{t}}+2)(\cos{t}, \sin{t}), \quad t\in[0,2\pi]$
		\vspace{1.5ex}\\
		\bottomrule[1pt]
	\end{tabular}\label{Table_1}
\end{table}

\subsection{The linear sampling method}

We use the system of boundary integral equations developed in \cite{DongHeping2024ANBI} to approximate the discretized far-field operator:
\begin{align*}
    \mathbf F=\left[ u^{\infty}(\Hat{x}_i,d_j) \right]_{i,j=1}^N, \quad \Hat{x}_i,d_j\in \mathbb S^1, 
\end{align*}
where $\mathbf F$ is an $N\times N$ complex-valued matrix corresponding to $N$ incident and observation directions. The directions are given by
\begin{align*}
\Hat{x}_i=d_i=(\cos (\theta_i), \sin (\theta_i))^\top, \quad\theta_i=2\pi (i-1)/N,\quad i=1,\dots,N.
\end{align*}
An additional quantity required is the far-field data vector of the fundamental solution, denoted by $\mathbf \varphi_z=\Phi(\cdot,z)$, which is computed as
\begin{align*}
    \mathbf \varphi_z=(\mathrm{e}^{-{\rm i}\kappa \Hat{x}_1\cdot z},\cdots, \mathrm{e}^{-{\rm i}\kappa \Hat{x}_{N}\cdot z})^\top,\quad z\in \mathbb R^2.
\end{align*}

To evaluate the stability of the method, we simulate experimental errors by adding random noise to the discretized far-field operator $\mathbf F$, resulting in the perturbed data:
\begin{align*}
    \mathbf F^{\delta}&=\left[\mathbf F_{i,j}(1+\delta \mathbf E_{i,j})\right]_{i,j=1}^{N},\quad \text{where }\norm{\mathbf E}_2=1.
\end{align*}
Here, $\mathbf E\in \mathbb C^{N\times N}$ is a random matrix with complex-valued entries, and $\delta>0$ denotes the relative noise level. In our numerical experiments, we consider noise levels of $\delta=2\%$ and $5\%$.

We numerically approximate the indicator function by solving for $\mathbf{g}_z^{\alpha}$ and plotting it's norm for any grid point $z$. Therefore, we that the indicator function for the LSM is given by 
\begin{align*}
    \mathcal I(z)=\frac{1}{\norm{\mathbf g_z^{\alpha}}_{\mathbb C^N}^2},
\end{align*}
where $\mathbf g_z^{\alpha}$ is the Tikhonov regularized solution to discretized far-field satisfying
\begin{align*}
(\alpha\mathbf I+\mathbf F^{*}\mathbf F)\mathbf g_z^{\alpha}=\mathbf F^{*}\mathbf \varphi_z,
\end{align*}
and $\norm{\cdot}_{\mathbb C^N}$ denotes the standard Euclidean norm on $\mathbb C^N$. We pick the regularization parameter ad-hoc such that $\alpha=10^{-6}$ to reconstruct the clamped cavities. In general, one can use a discrepancy principle to pick an optimal $\alpha$ but in our numerical experiments we see that this choice gives good reconstructions. Unless otherwise specified, the reconstructions are performed using far-field data with $N=32$ observation and incident directions.

In each example, the imaging domain is taken to be $[-1.5, 1.5]\times [-1.5, 1.5]$, discretized into a $128\times 128$ uniformly spaced grid. The boundary of the exact cavity is depicted by white dashed lines in the figures.

\subsubsection{Example 1. An apple-shaped cavity}

For the apple-shaped cavity, we consider wavenumbers $\kappa=\pi$ and $2\pi$. In the initial reconstructions shown in Figure \ref{fig:apple1}, we assume access to the discretized far-field operator with $N=32$ observation and incident directions, and no noise is added to the data. The results demonstrate that the spatial resolution of the reconstructed cavity improves with increasing wavenumber $\kappa$.

Figure \ref{fig:apple2} presents additional reconstructions of the apple-shaped cavity with random noise levels $\delta=0.02$ and $0.05$, corresponding to 2\% and 5\% relative noise, respectively. The reconstructions remain stable under noise, illustrating the robustness and effectiveness of the LSM in recovering the cavity shape. Figure \ref{fig:apple3} shows that the reconstruction becomes more robust to noise as the number of incident and observation directions increases.

\begin{figure}[htp]
\centering	
\subfigure[$\kappa=\pi$]{\includegraphics[width=0.48\textwidth]{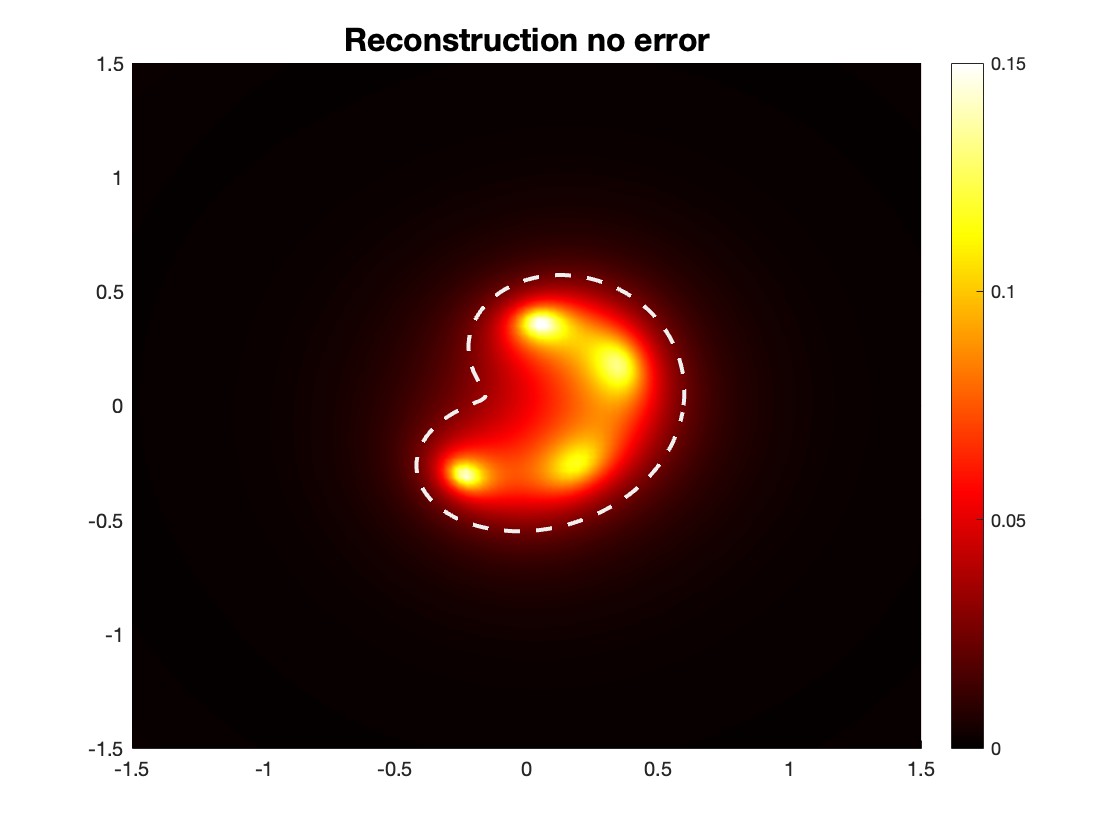}}\quad
\subfigure[$\kappa=2\pi$]{\includegraphics[width=0.48\textwidth]{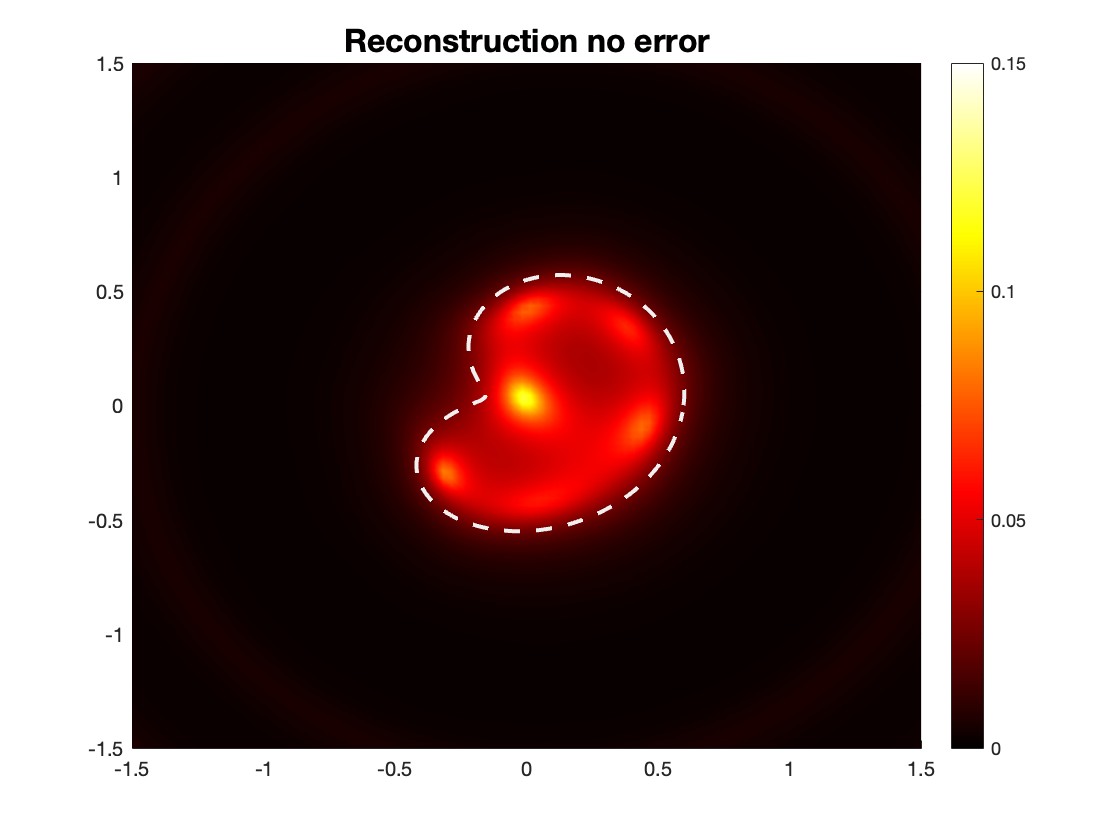}}
\caption{Example 1:  Reconstruction of the apple-shaped cavity using the LSM with (a) $\kappa=\pi$ and (b) $\kappa=2\pi$, employing a regularization parameter $\alpha=10^{-6}$. No random noise is added in this example.}\label{fig:apple1}
\end{figure}

\begin{figure}[htp]
\centering	
\subfigure[$\delta=2\%$]{\includegraphics[width=0.48\textwidth]{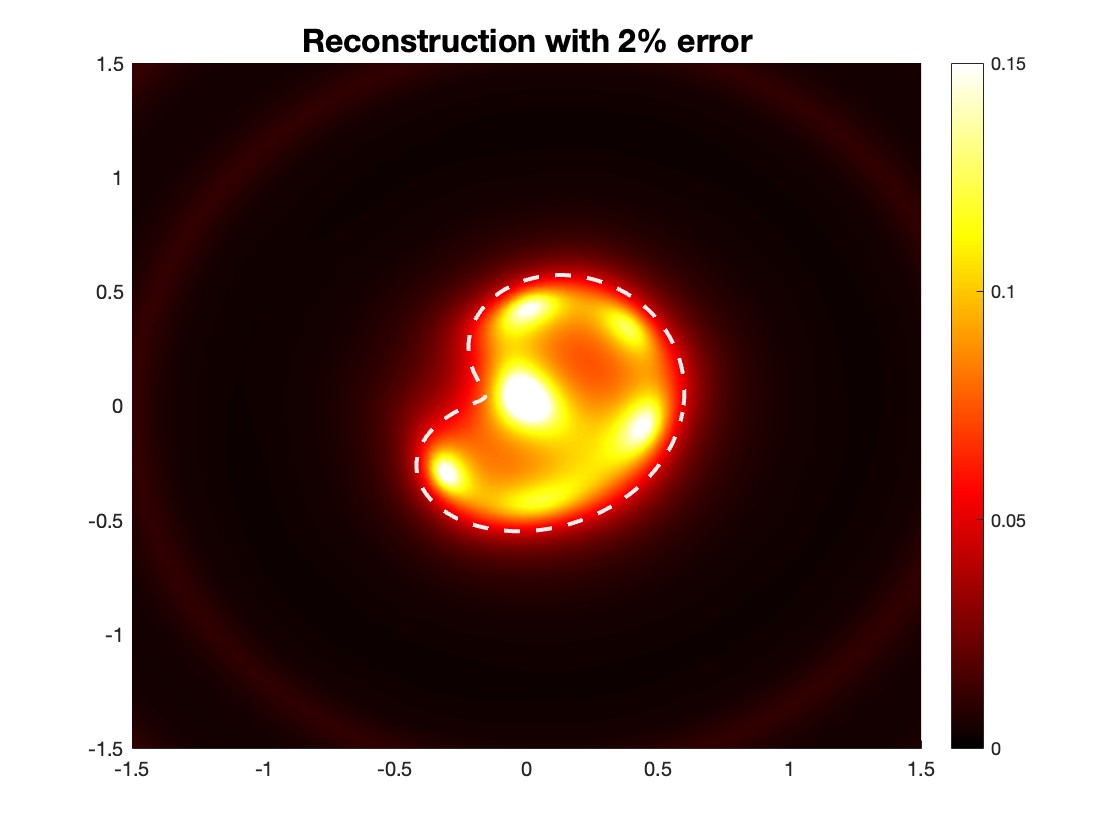}}\quad
\subfigure[$\delta=5\%$]{\includegraphics[width=0.48\textwidth]{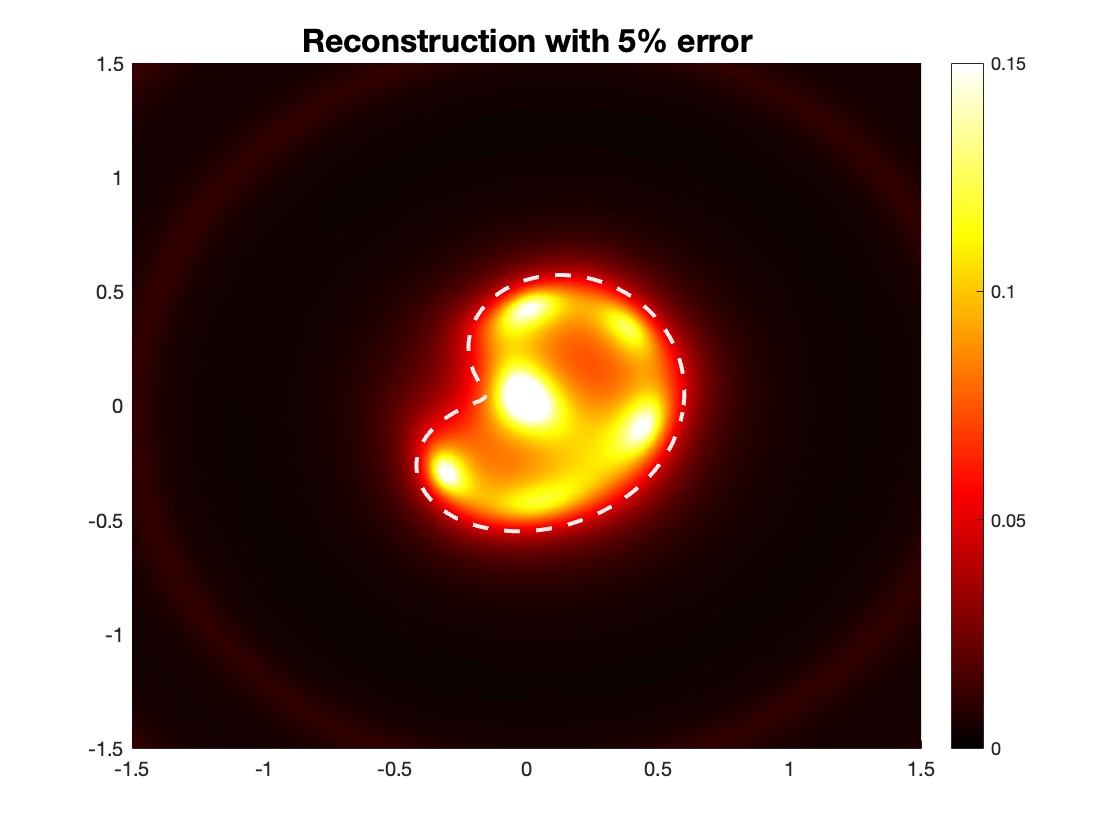}}
\caption{Example 1: Reconstruction of the apple-shaped cavity using the LSM at wavenumber $\kappa=2\pi$, with noise levels (a) $\delta=2\%$ and (b) $\delta=5\%$. The regularization parameter is set to $\alpha=10^{-6}$.}\label{fig:apple2}
\end{figure}

\begin{figure}[htp]
\centering	
\subfigure[$N=64$]{\includegraphics[width=0.48\textwidth]{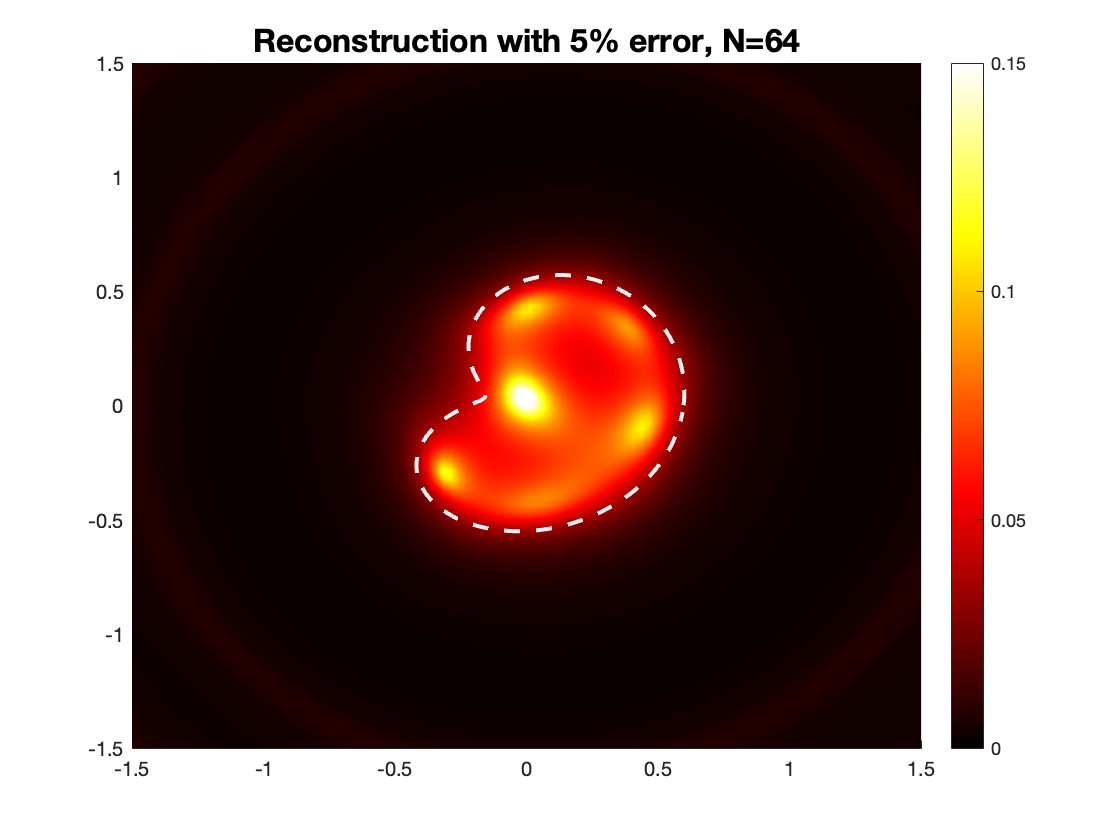}}\quad
\subfigure[$N=128$]{\includegraphics[width=0.48\textwidth]{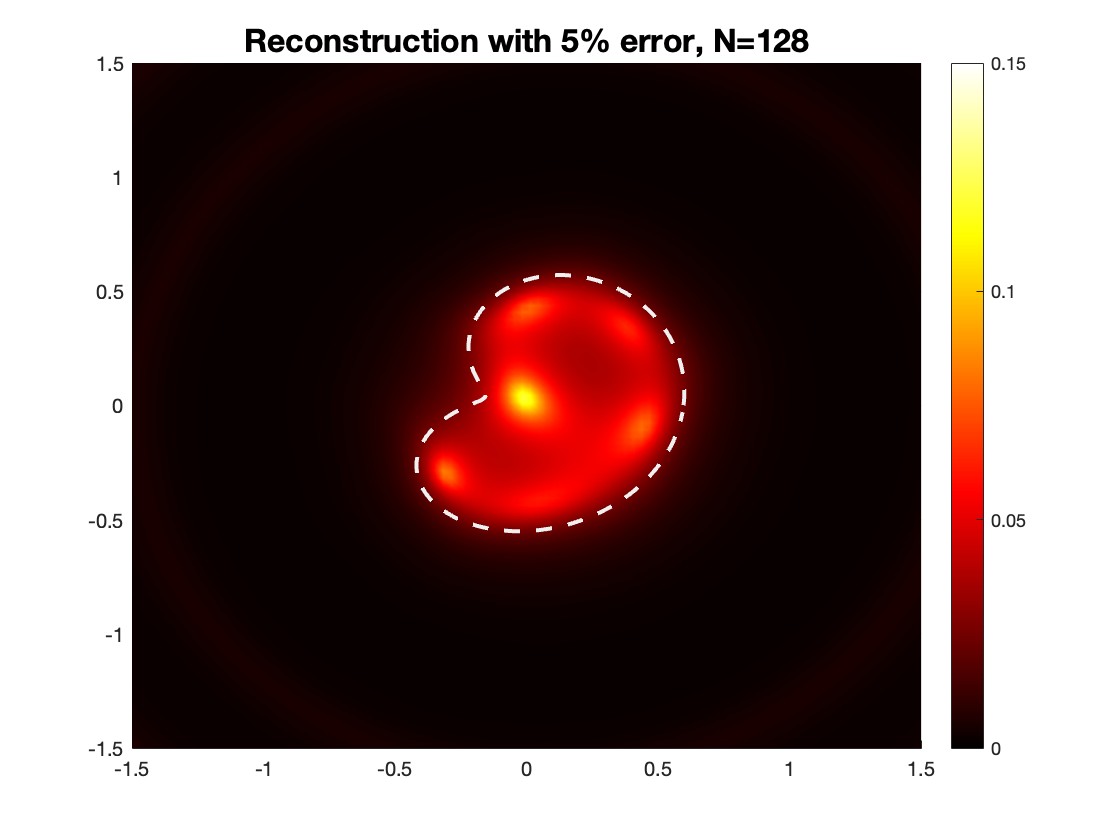}}
\caption{Example 1: Reconstruction of the apple-shaped cavity using the LSM at wavenumber $\kappa=2\pi$, with $5\%$ random noise added to the data. The reconstructions are performed with varying numbers of incident and observation directions: (a) $N=64$ and (b) $N=128$.}\label{fig:apple3}
\end{figure}

\subsubsection{Example 2. A peanut-shaped cavity}

For this reconstruction, we use the same physical parameters as in the apple-shaped cavity. In Figure~\ref{fig:peanut1}, we observe that the reconstructions of the peanut-shaped cavity are again reasonably accurate, with improved spatial resolution as the wavenumber $\kappa$ increases. Using the LSM imaging function, we successfully recover the cavity's location, size, and shape.

Figure~\ref{fig:peanut2} demonstrates that the reconstructions remain robust in the presence of random noise, highlighting the effectiveness of the LSM in identifying the clamped cavity. As shown in Figure~\ref{fig:peanut3}, the reconstructions exhibit increased robustness to noise when the number of incident and observation directions satisfies $N\geq 64$.

\begin{figure}[htp]
\centering	
\subfigure[$\kappa=\pi$]{\includegraphics[width=0.48\textwidth]{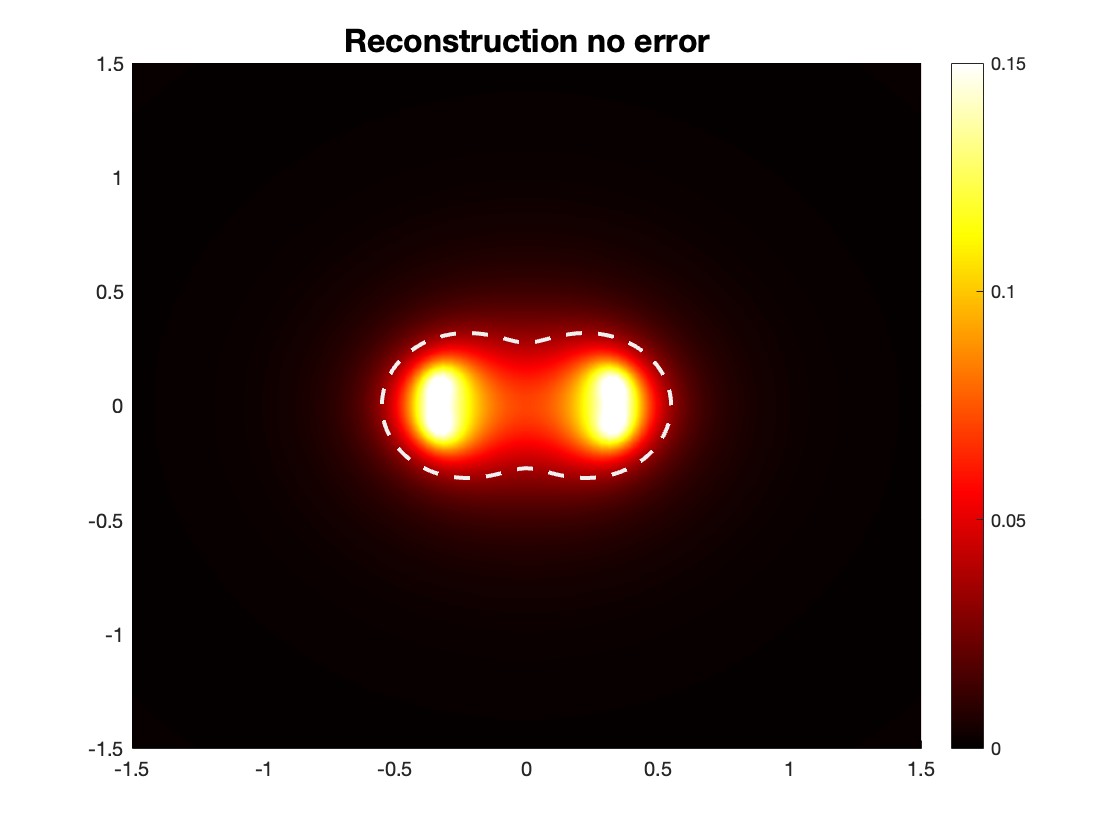}}\quad
\subfigure[$\kappa=2\pi$]{\includegraphics[width=0.48\textwidth]{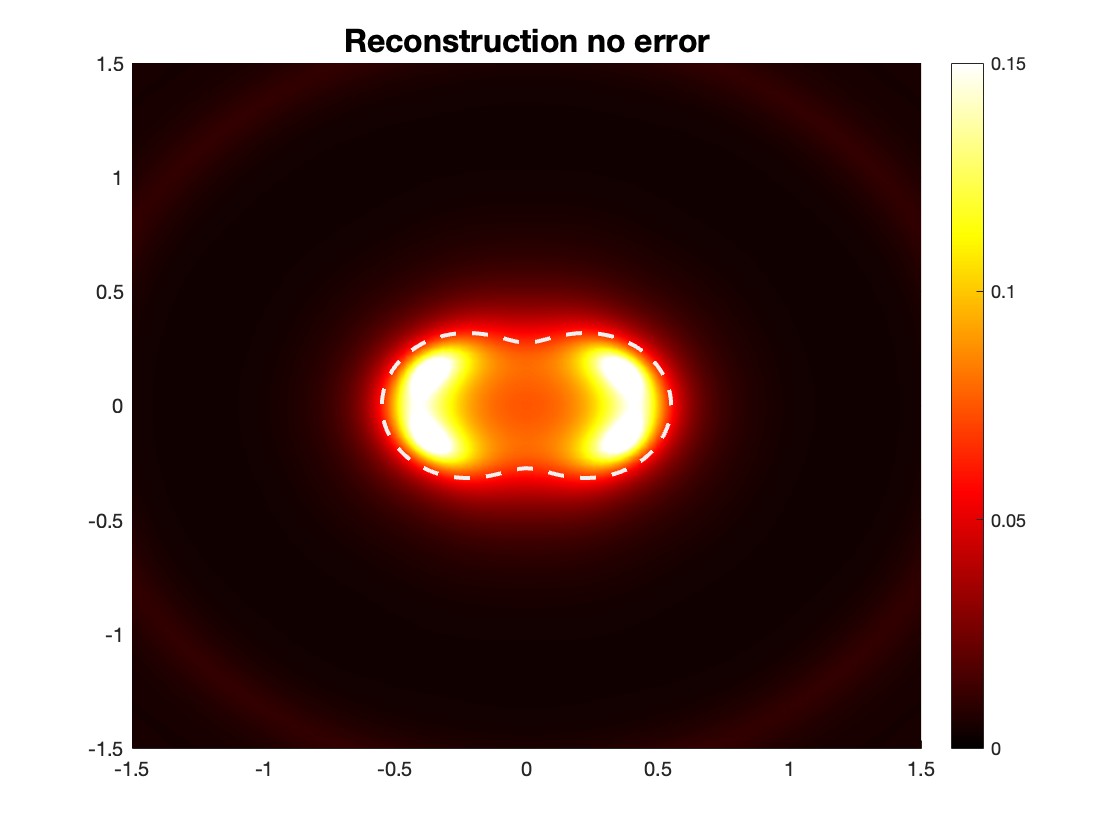}}
\caption{Example 2: Reconstruction of the peanut-shaped cavity using the LSM with (a) $\kappa=\pi$ and (b) $\kappa=2\pi$. No random noise is added in these experiments.}\label{fig:peanut1}
\end{figure}

\begin{figure}[htp]
\centering
\subfigure[$\delta=2\%$]{\includegraphics[width=0.48\textwidth]{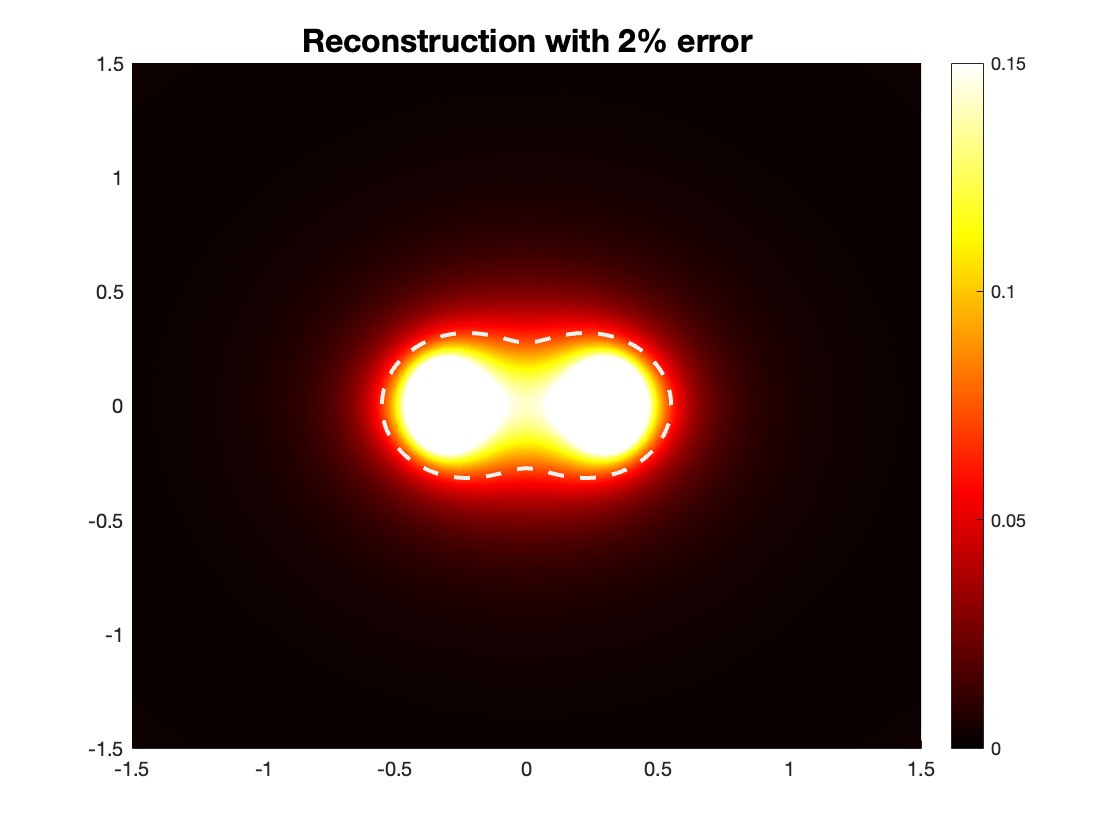}}\quad
\subfigure[$\delta=5\%$]{\includegraphics[width=0.48\textwidth]{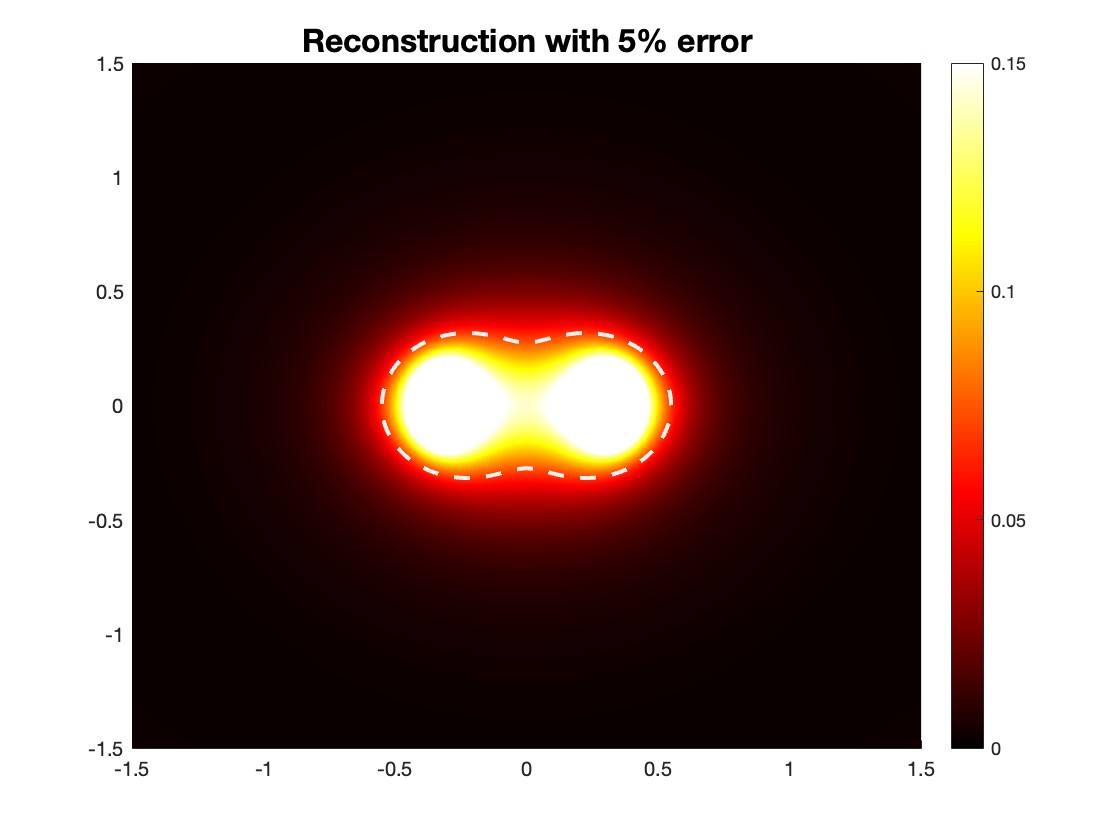}}
\caption{Example 2: Reconstruction of the peanut-shaped cavity using the LSM with a wavenumber of $\kappa=\pi$ and added random noise. The noise levels are (a) $\delta=2\%$ and (b) $\delta=5\%$, with a regularization parameter $\alpha=10^{-6}$.}\label{fig:peanut2}
\end{figure}

\begin{figure}[htp]
\centering	
\subfigure[$N=64$]{\includegraphics[width=0.48\textwidth]{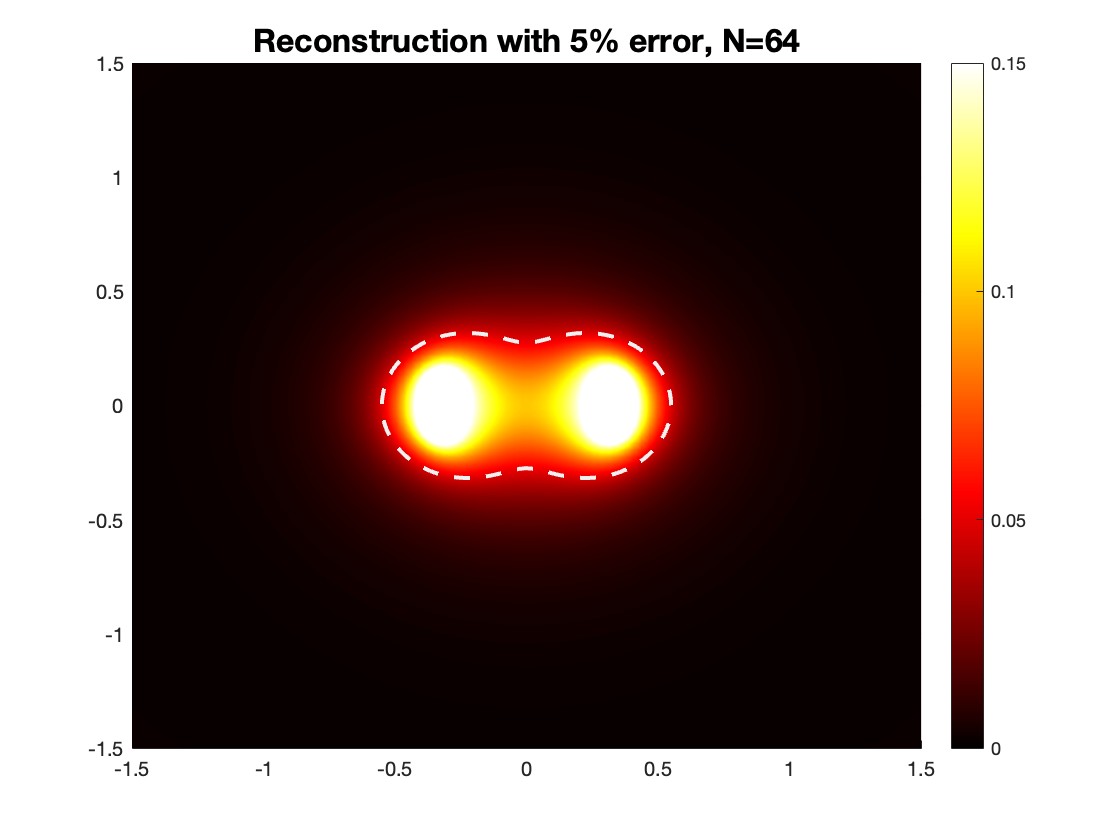}}\quad
\subfigure[$N=128$]{\includegraphics[width=0.48\textwidth]{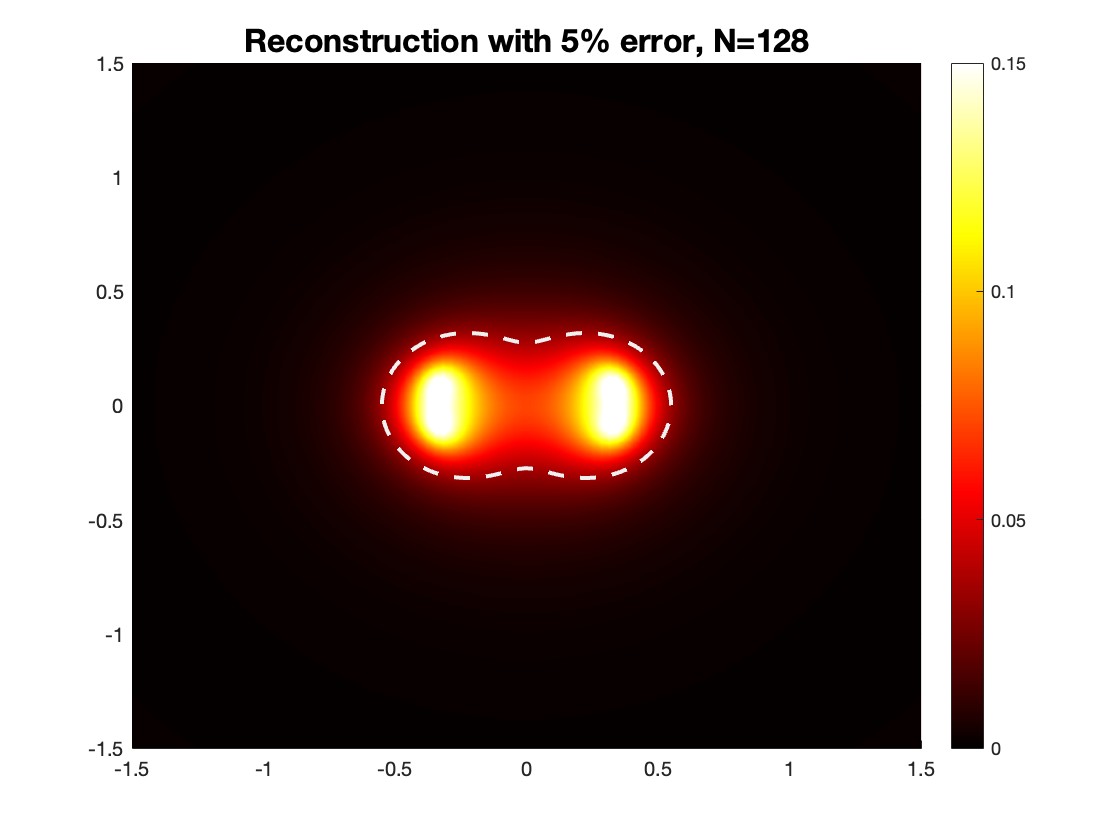}}
\caption{Example 2: Reconstruction of the peanut-shaped cavity using the LSM with a wavenumber of $\kappa=\pi$ and $5\%$ random noise added. The reconstructions are shown for varying numbers of incident and observation directions: (a) $N=64$ and (b) $N=128$.}\label{fig:peanut3}
\end{figure}

\subsubsection{Example 3. A peach-shaped cavity}

Unlike the apple- and peanut-shaped cavities, the peach-shaped cavity does not possess an analytic boundary; its first derivative exhibits a singularity at $t=\pi/2$. In Figure \ref{fig:peach1}, the reconstructions are reasonably accurate, with improved spatial resolution observed as the wavenumber $\kappa$ increases. These results demonstrate the effectiveness of the LSM in accurately reconstructing cavities with non-analytic boundaries. Figure~\ref{fig:peach2} further illustrates the robustness of the LSM in the presence of random noise for the peach-shaped cavity. As shown in Figure~\ref{fig:peach3}, the reconstructions become more robust to noise with an increasing number of incident and observation directions.

\begin{figure}[htp]
\centering	
\subfigure[$\kappa=\pi$]{\includegraphics[width=0.48\textwidth]{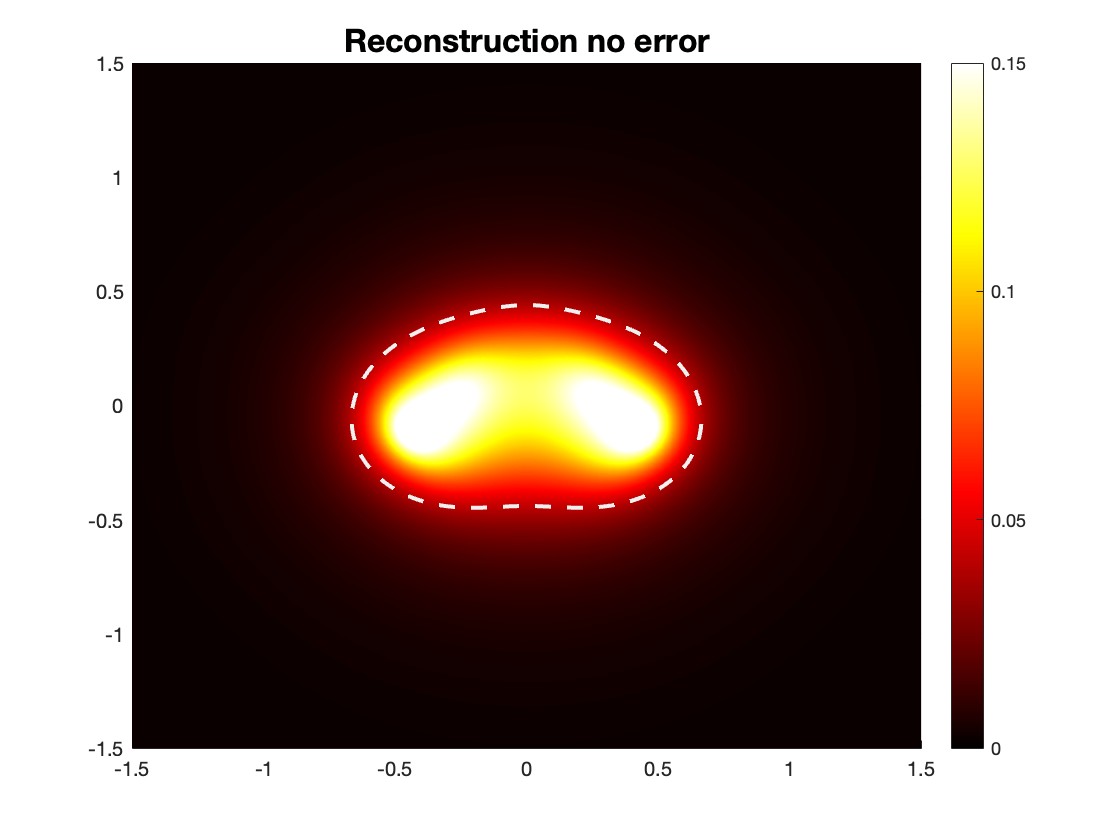}}\quad
\subfigure[$\kappa=2\pi$]{\includegraphics[width=0.48\textwidth]{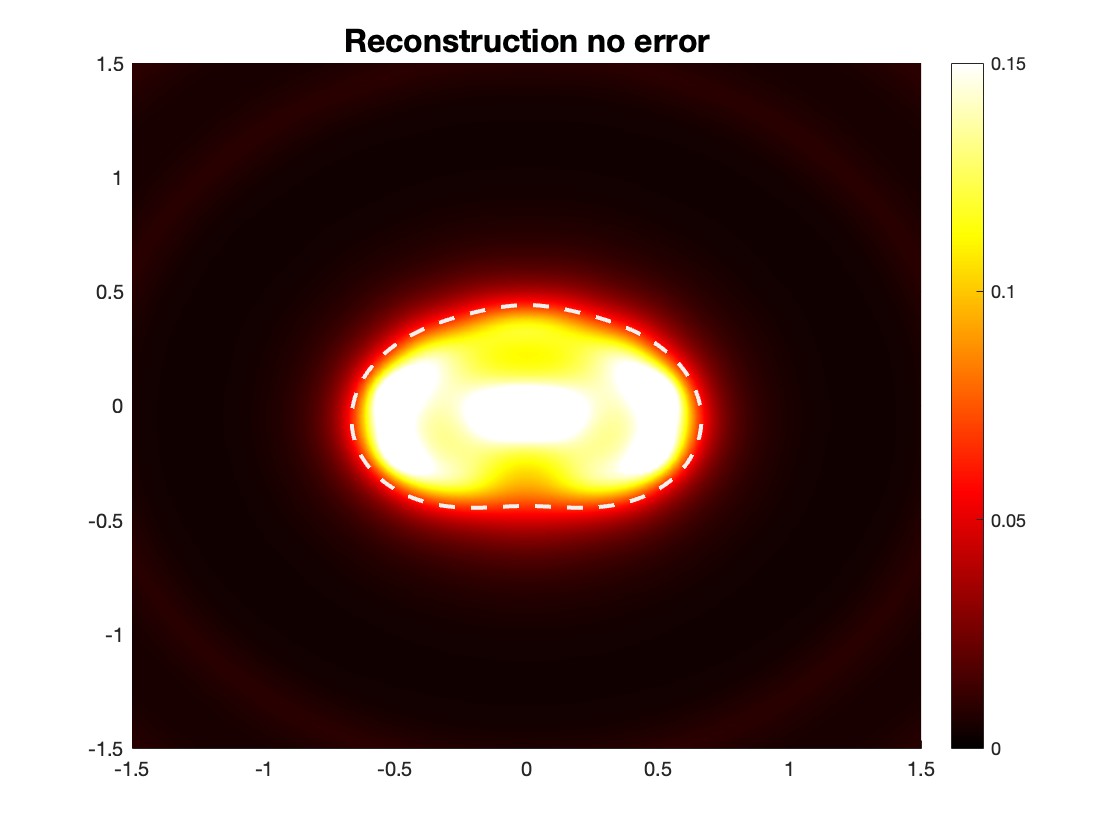}}
\caption{Example 3: Reconstruction of the peach-shaped cavity using the LSM with (a) $\kappa = \pi$ and (b) $\kappa = 2\pi$, using a regularization parameter of $\alpha = 10^{-6}$.}\label{fig:peach1}
\end{figure}

\begin{figure}[htp]
\centering	
\subfigure[$\delta=2\%$]{\includegraphics[width=0.48\textwidth]{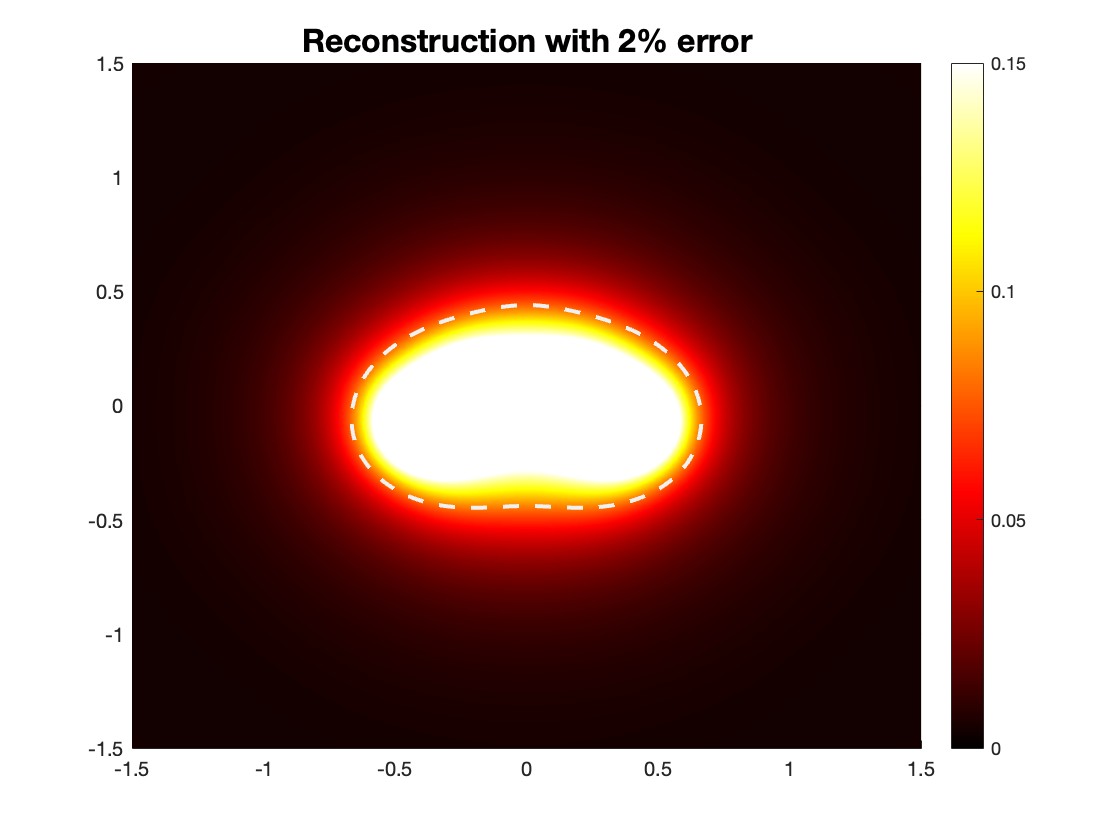}}\quad
\subfigure[$\delta=5\%$]{\includegraphics[width=0.48\textwidth]{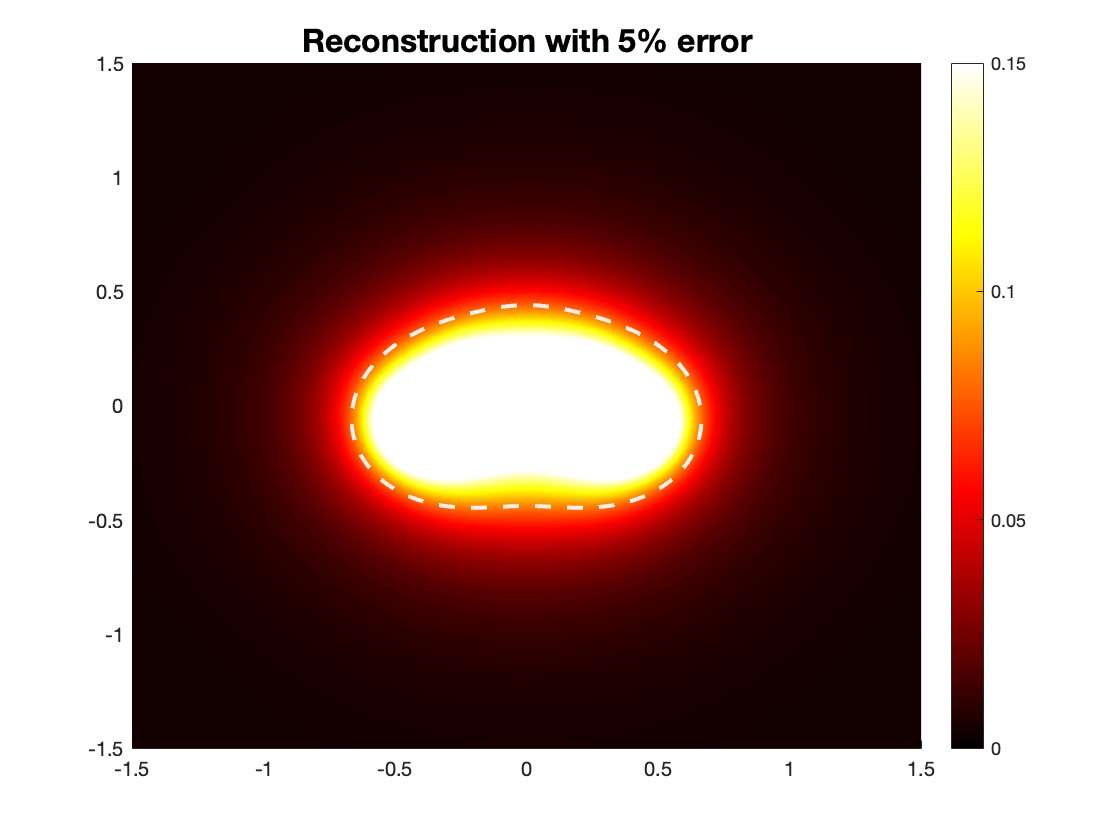}}
\caption{Example 3: Reconstruction of the peach-shaped cavity using the LSM with a wavenumber of $\kappa = \pi$ and added random noise. The noise levels are (a) $\delta = 2\%$ and (b) $\delta = 5\%$, with a regularization parameter of $\alpha = 10^{-6}$.}\label{fig:peach2}
\end{figure}

\begin{figure}[htp]
\centering	
\subfigure[$N=64$]{\includegraphics[width=0.48\textwidth]{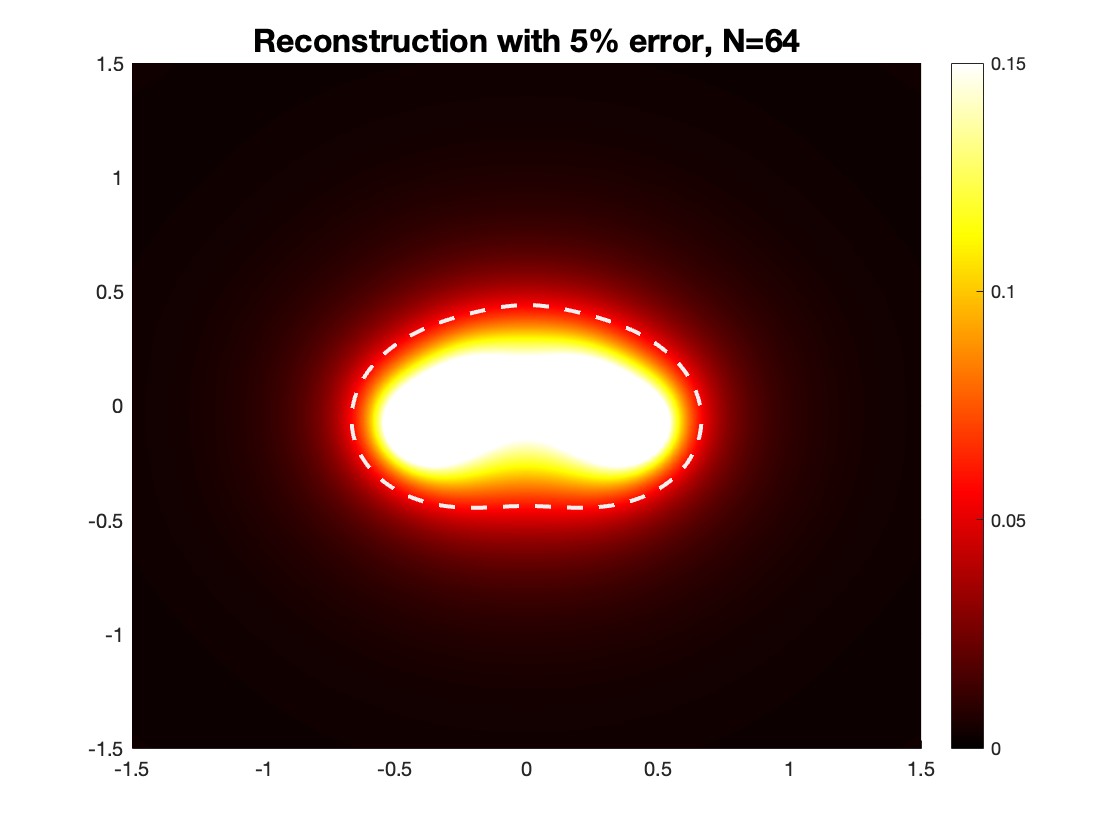}}\quad
\subfigure[$N=128$]{\includegraphics[width=0.48\textwidth]{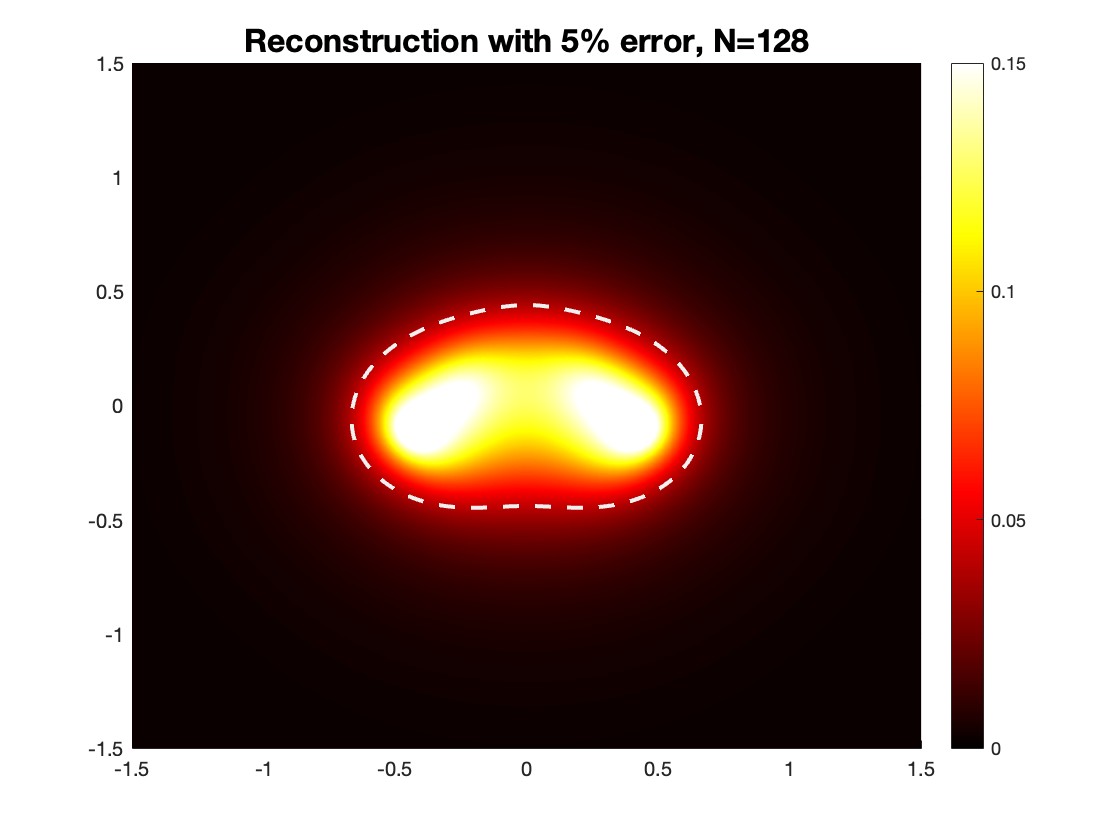}}
\caption{Example 3: Reconstruction of the peach-shaped cavity using the LSM with a wavenumber of $\kappa = \pi$, $5\%$ random noise, and a regularization parameter of $\alpha = 10^{-6}$, for varying numbers of incident and observation directions: (a) $N = 64$ and (b) $N = 128$.}\label{fig:peach3}
 \end{figure}

\subsubsection{Recovering the unit ball with a Dirichlet eigenvalue}

In addition to assuming that the wavenumber $\kappa^2$ is not an eigenvalue of the clamped transmission problem given by (\ref{eigenvalue_problem}), the main result in \cite{guoetal2024} also assumes that $\kappa^2$ is not a Dirichlet eigenvalue of $-\Delta$ in $D$. However, this latter assumption is not essential for the effective reconstruction of a clamped cavity. For instance, consider the case where $\kappa^2 = \lambda$, with $\lambda$ being an eigenvalue of the Dirichlet problem:
\begin{align*}
-\Delta \phi = \lambda \phi \quad \text{in } B_1(0), \quad \phi = 0 \quad \text{on } \partial B_1(0).
\end{align*}
The eigenvalues of this problem are given by $\lambda_{mn} = j_{mn}^2$, where $j_{mn}$ denotes the $m$-th positive zero of the Bessel function $J_n(r)$ of order $n$. Thus, $\kappa_{mn} = \sqrt{\lambda_{mn}} = j_{mn}$. Figure~\ref{fig:unitball} illustrates the reconstruction of the unit disk $D = B_1(0)$ using $\kappa_1 \approx 2.40483$ and $\kappa_2 \approx 5.5201$, which are the first and second roots of $J_0(r)$ and correspond to the Dirichlet eigenvalues of $-\Delta$ in $B_1(0)$.

\begin{figure}[htp]
\centering	
\subfigure[$\kappa=j_{01}$]{\includegraphics[width=0.48\textwidth]{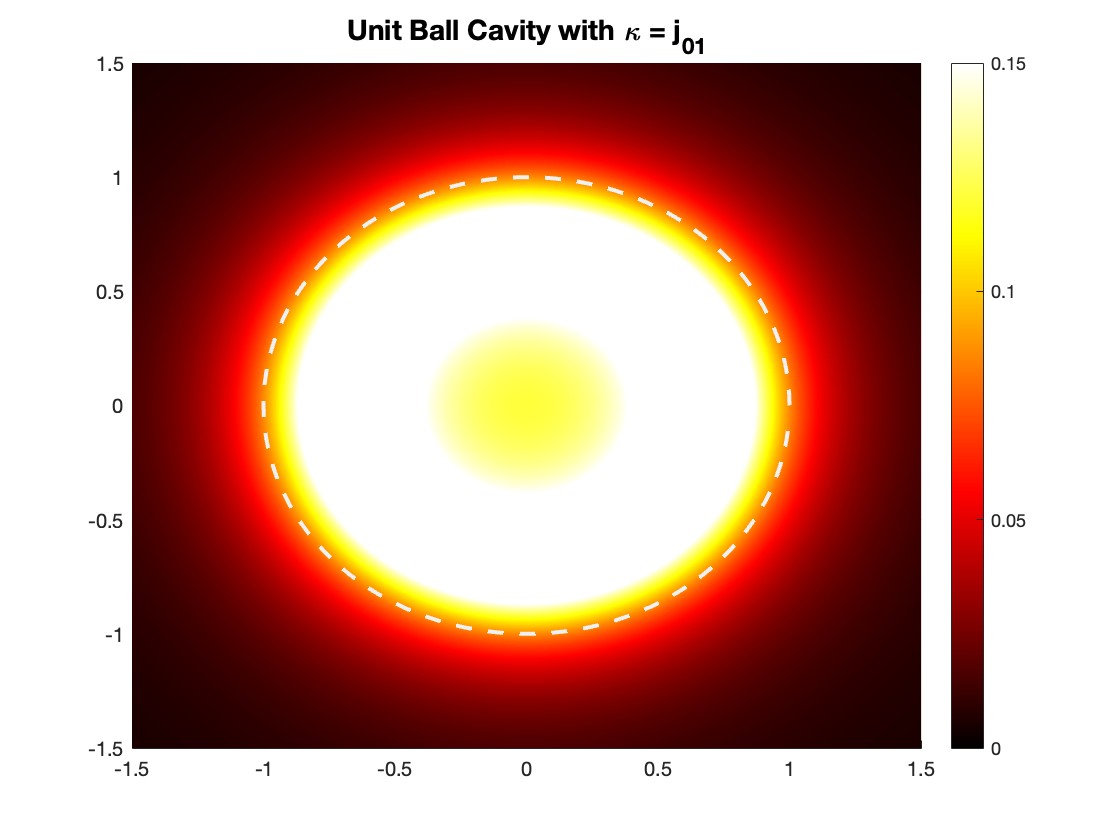}}\quad
\subfigure[$\kappa=j_{02}$]{\includegraphics[width=0.48\textwidth]{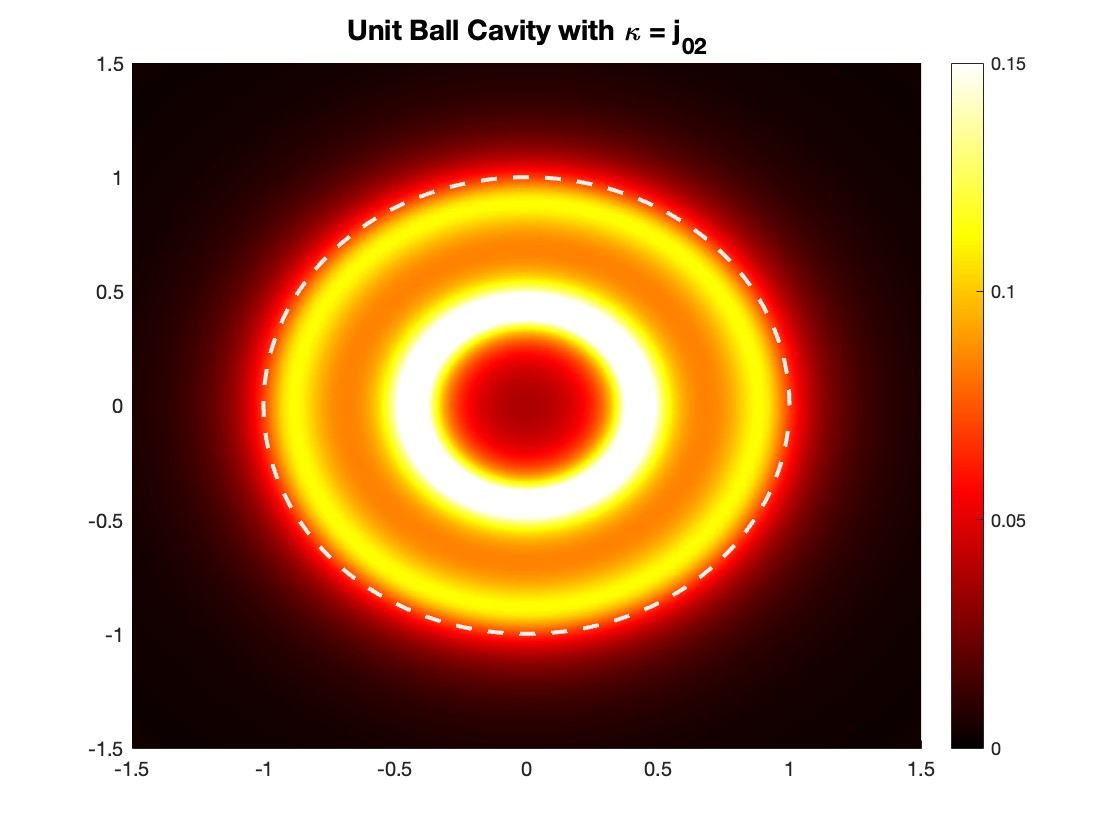}}
\caption{Example 4: Reconstruction of the unit disk using the LSM with no added noise and a regularization parameter of $\alpha = 10^{-6}$. (a) $\kappa = j_{01}$; (b) $\kappa = j_{02}$, where $j_{01}$ and $j_{02}$ are the first and second positive zeros of the Bessel function $J_0(r)$, respectively.}\label{fig:unitball}
 \end{figure}
 
\subsection{The extended sampling method}\label{section_ESM_numerics}
 
 We present several numerical examples to demonstrate the effectiveness of the ESM in recovering the location of clamped cavities. As test geometries, we again consider the apple-, peanut-, and peach-shaped cavities defined in Table \ref{Table_1}. The far-field data $u^{\infty}(\Hat x,d)$ , for $(\Hat{x},d)\in \mathbb S^1\times \mathbb{S}^1_{\text{inc}}$, are computed as discussed in the previous section. In each example, we employ an equally spaced $200\times 200$ sampling grid over the imaging domain. For each sampling point $z$, we apply Tikhonov regularization with a fixed parameter $\alpha=10^{-4}$ to solve the discretized modified far-field equation. This gives that (\ref{approx_ESM_FF_eqn}) becomes linear system
 \[
 \mathbf A^z\mathbf g_z^{\alpha}=u^{\infty}(\cdot \, ,d),
 \]
 where the matrix $\mathbf{A}^z$ is defined as
 \begin{align*}
\mathbf A_{i,j}^z=\mathrm{e}^{{\rm i}\kappa \cdot z\cdot(\hat{y}_j-\hat{x}_i)}U_B^{\infty}(\Hat{x}_i,\Hat{y}_j),\quad i,j=1,2,\dots,40.
 \end{align*}
 The regularized solution is computed by
 \begin{align*}
\mathbf g_z^{\alpha}(d)\approx ((\mathbf A^z)^{*}\mathbf A^z+\alpha \mathbf I)^{-1}(\mathbf A^z)^{*}u^{\infty}(\cdot \, ,d),
 \end{align*}
 where $\mathbf I$ is the identity matrix. The discrete indicator function is then defined as
 \begin{align*}
\mathcal I(z)= \frac{ \|\mathbf{g}_z^{\alpha} \|_{\ell^2}}{\max\limits_{z \in \mathcal{M}}  \|\mathbf{g}_z^{\alpha} \|_{\ell^2} }
 \end{align*}
 for all sampling points $z$ where we pick the location of the cavity to be the minimizer of $\mathcal{I}(z)$ on the sampling grid $\mathcal{M}$.  The discrete indicator functions for multiple incident directions and multiple frequencies are defined similarly using equations (\ref{ESM_ind_1}) and (\ref{ESM_ind_2}), respectively.
 
\subsubsection{A fixed incident direction} 
 
For our selected numerical examples, we consider a fixed incident direction given by
\begin{align*}
d_0=\left\{(\cos{\theta},\sin{\theta})\,|\,\theta=\pi/3 \right\}=\left\{ \left(\frac{1}{2},\frac{\sqrt 3}{2} \right) \right\},
\end{align*}
with a full observation aperture $\mathbb S^1$. 

We use $40$ observation directions in all the reconstructions. Figures \ref{fig:esmapple}, \ref{fig:esmpeanut}, and \ref{fig:esmpeach} present multilevel ESM reconstructions of the location of apple-, peanut-, and peach-shaped cavities, both centered at the origin and shifted to $(-1.5,1.5)$, using the configuration 
$\mathbb{S}^1 \times \{  d_0 \} $ i.e., a single incident direction. Since the size of the cavity is not known a priori, the multilevel ESM is employed to identify an appropriate radius $R$ for the sampling disks. Starting from an initial value of $R=4.0$, the radius is successively decreased until a satisfactory resolution is achieved. For cavities centered at the origin, the optimal sampling radius is determined to be $R=0.5$, while for cavities centered at $(-1.5,1.5)$, the optimal radius is $R=2.5$.

 \begin{figure}[htp]
\centering	
\subfigure[The apple-shaped cavity centered at the origin]{\includegraphics[width=0.48\textwidth]{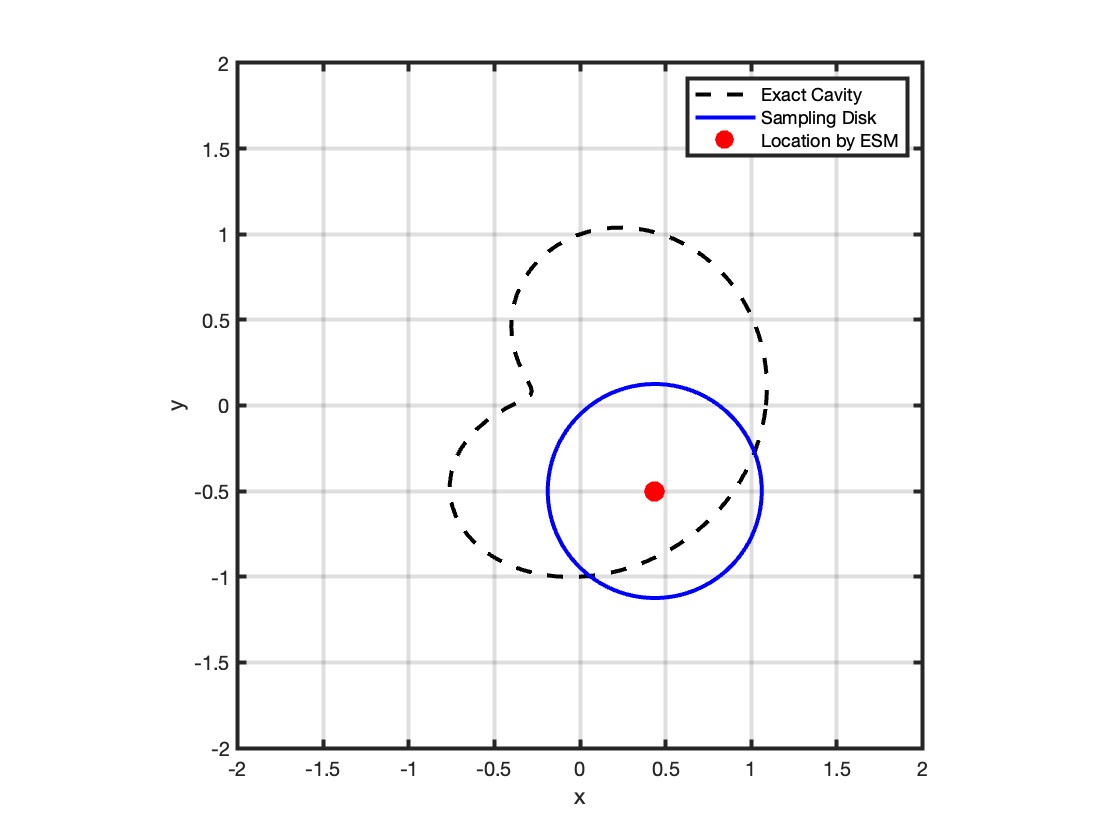}}\quad
\subfigure[The apple-shaped cavity shifted to $(-1.5,1.5)$]{\includegraphics[width=0.48\textwidth]{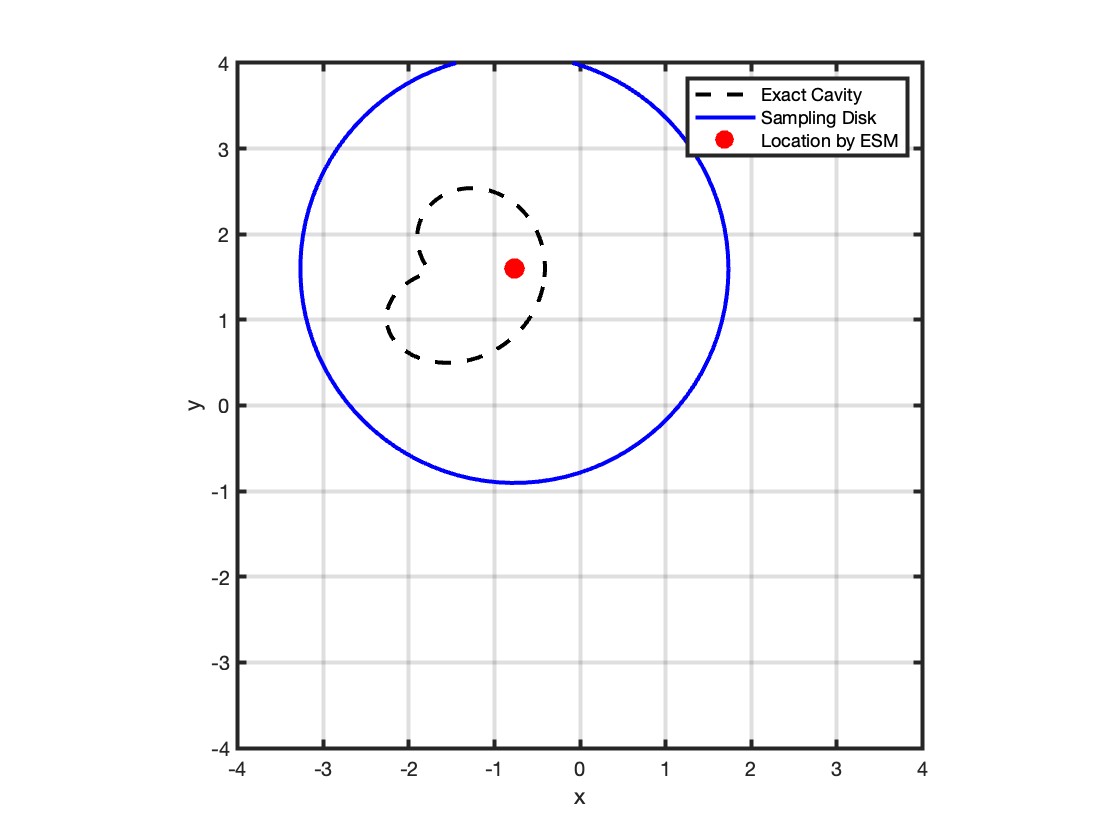}}
\caption{Reconstruction results using multilevel ESM for (a) the apple-shaped cavity centered at the origin and (b) the apple-shaped cavity shifted to $(-1.5,1.5)$, based on a single incident direction $d_0$.}\label{fig:esmapple}
\end{figure}
 
\begin{figure}[htp]
\centering	
\subfigure[The peanut-shaped cavity centered at the origin]{\includegraphics[width=0.48\textwidth]{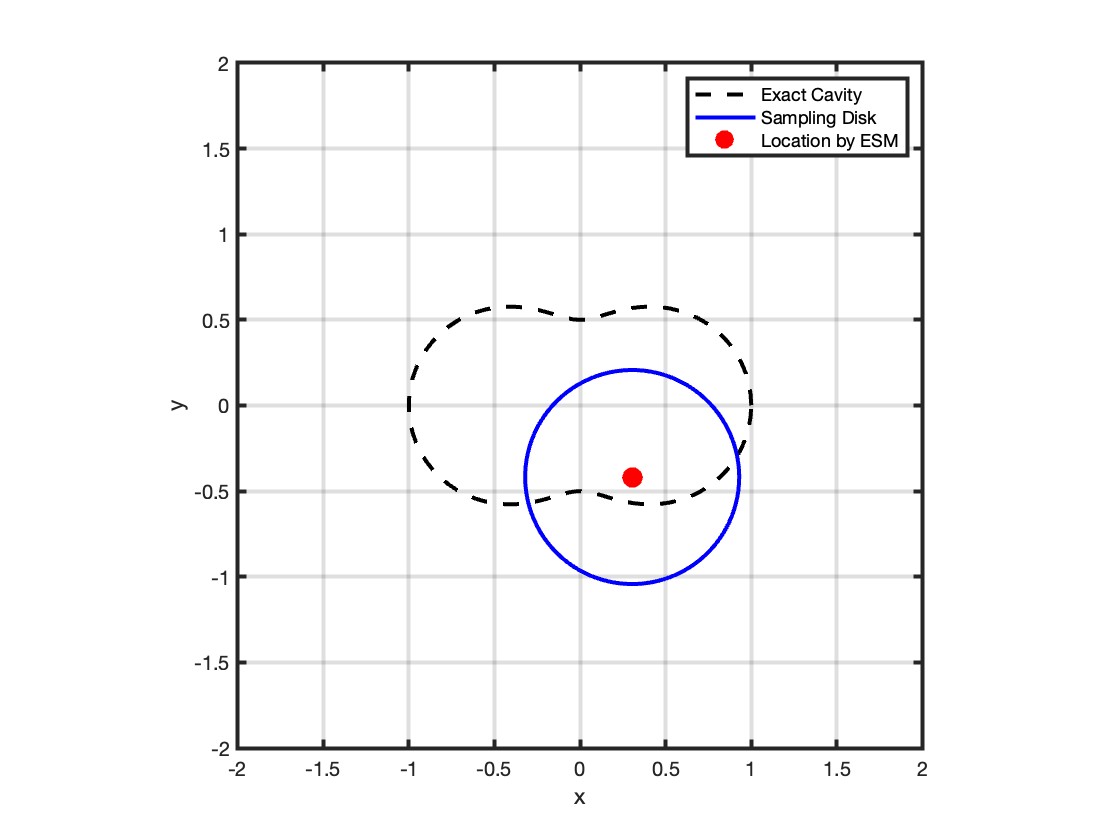}}\quad
\subfigure[The peanut-shaped cavity shifted to $(-1.5,1.5)$]{\includegraphics[width=0.48\textwidth]{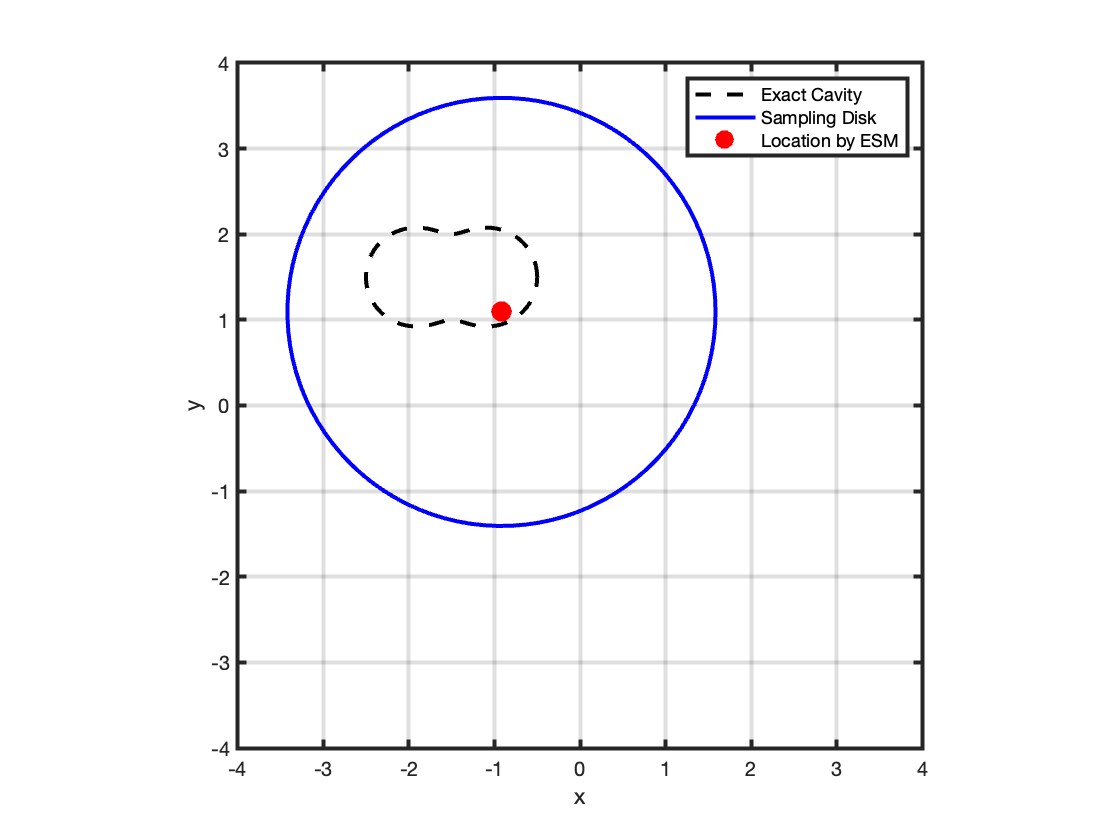}}
\caption{Reconstruction results using multilevel ESM for (a) the peanut-shaped cavity centered at the origin and (b) the peanut-shaped cavity shifted to $(-1.5,1.5)$, based on a single incident direction $d_0$.}\label{fig:esmpeanut}
\end{figure}
 
\begin{figure}[htp]
\centering	
\subfigure[The peach-shaped cavity centered at the origin]{\includegraphics[width=0.48\textwidth]{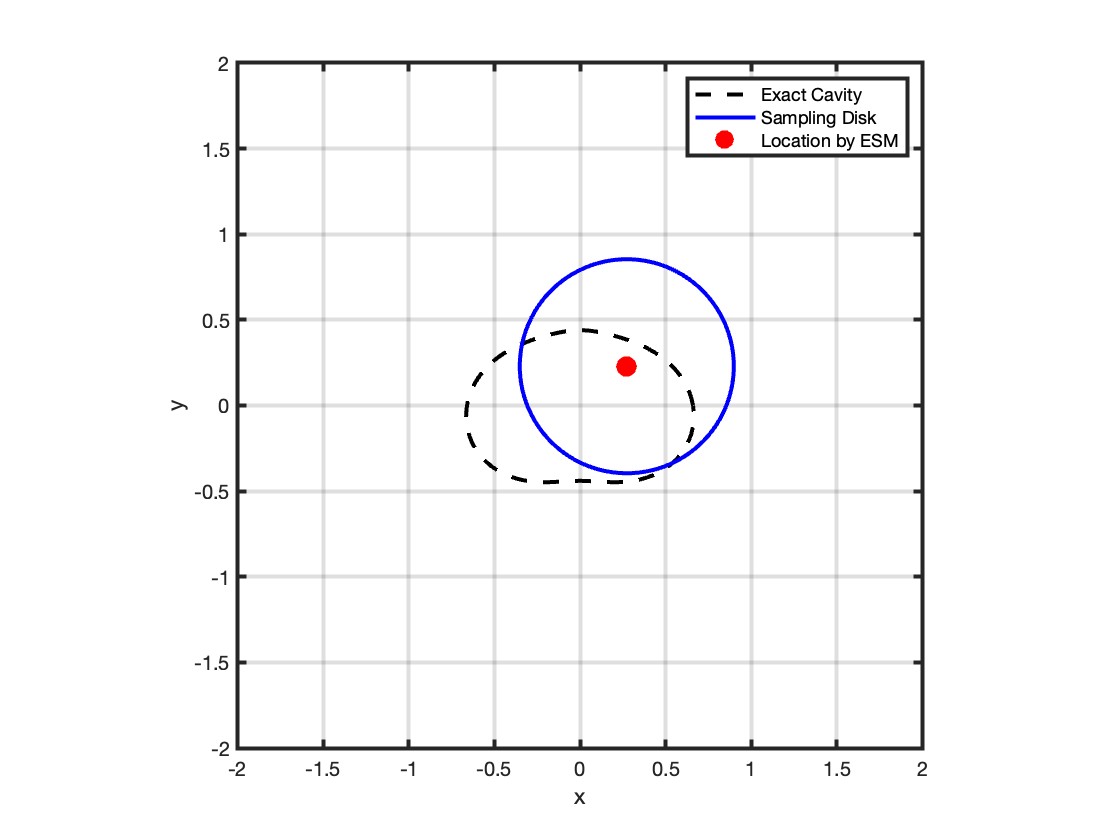}}\quad
\subfigure[The peach-shaped cavity shifted to $(-1.5,1.5)$]{\includegraphics[width=0.48\textwidth]{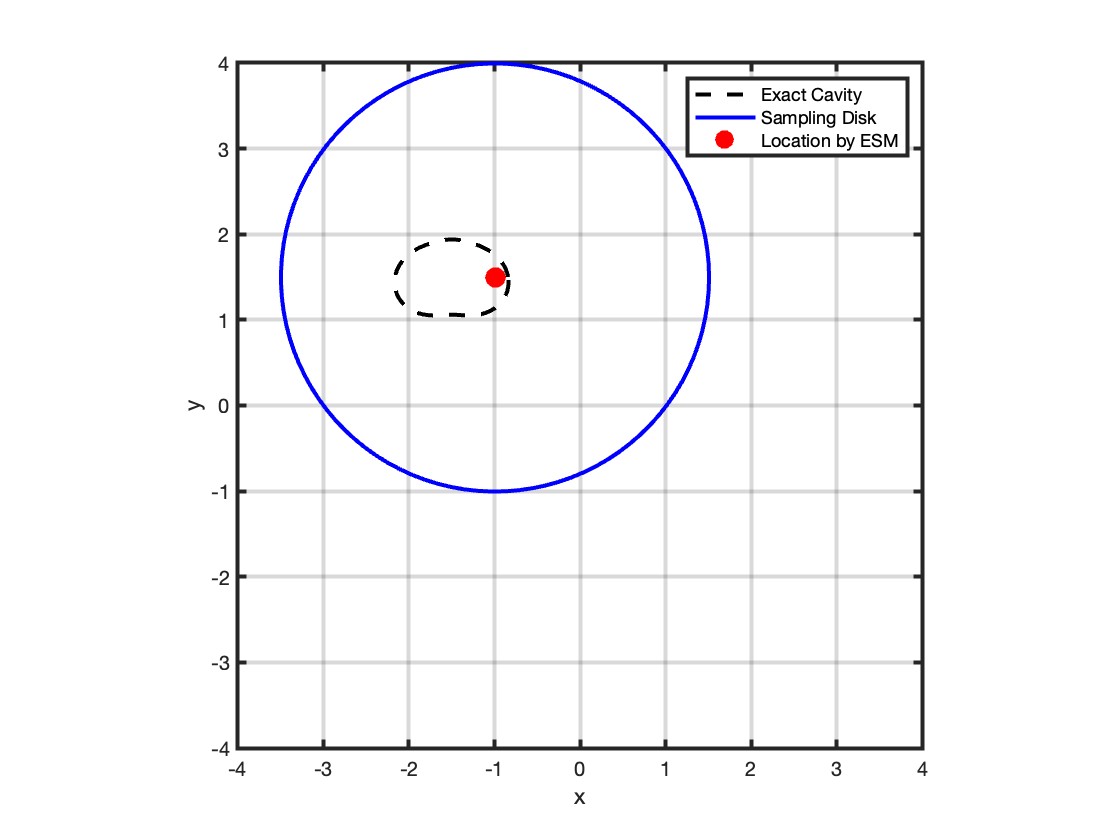}}
\caption{Reconstruction results using multilevel ESM for (a) the peach-shaped cavity centered at the origin and (b) the peach-shaped cavity shifted to $(-1.5,1.5)$, based on a single incident direction $d_0$.}\label{fig:esmpeach}
\end{figure}
 
\subsubsection{Multi-incident directions}

Selecting an appropriate radius $R$ for the sampling disks is critical for accurately reconstructing the location of clamped cavities when only a single incident direction is used. If $R$ is too large or too small, the reconstruction accuracy deteriorates. Although multilevel ESM helps estimate a suitable value of $R$, this choice becomes less sensitive when far-field data from multiple incident directions are available.

Figure \ref{fig:multiincpeach} shows the reconstruction of the approximate location of the peach-shaped cavity using a single incident direction $d_0$, as well as $5$ and $10$ incident directions corresponding to the incident apertures $\mathbb{S}^1_{\text{inc},1} $ and $\mathbb{S}^1_{\text{inc},2} $, respectively. These incident apertures are defined as
\begin{align*}
\mathbb{S}^1_{\text{inc},1} &=\big\{(\cos{\theta},\sin{\theta})\,|\,\theta=j\pi/8, j=0, 1, \dots, 4\big\},\\
\mathbb{S}^1_{\text{inc},2} &=\big\{(\cos{\theta},\sin{\theta})\,|\,\theta=j\pi/5, j=0, 1, \dots, 9\big\}.
\end{align*}  
We set the sampling disk radius to $R=1$, use \( I=40 \) observation directions, and apply Tikhonov regularization with parameter \( \alpha=10^{-4} \). The wavenumber is fixed at \( \kappa=2\pi \). As the number of incident directions increases, the accuracy of the reconstructed cavity location improves, even with a fixed and non-optimized radius \( R \).

\begin{figure}[htp]
\centering	
\subfigure[Single direction $d_0$]{\includegraphics[width=0.48\textwidth]{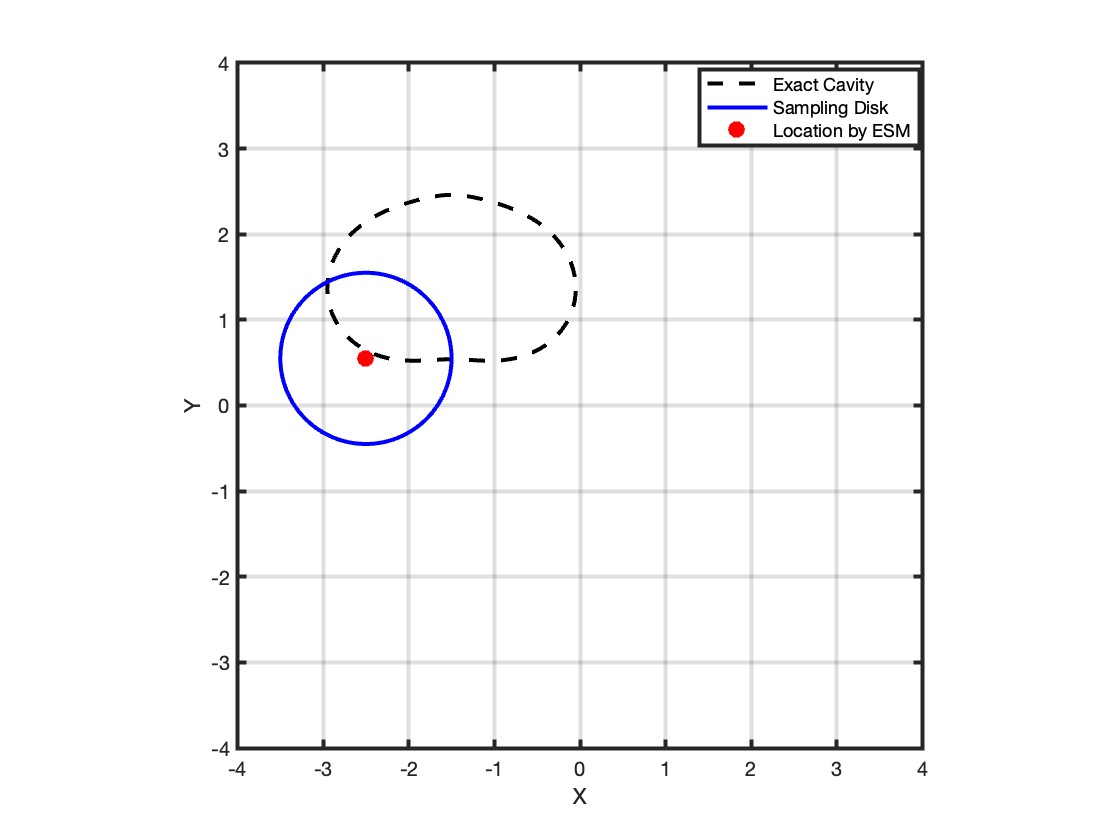}}\quad
\subfigure[Five directions $\mathbb{S}^1_{\text{inc},1} $]{\includegraphics[width=0.48\textwidth]{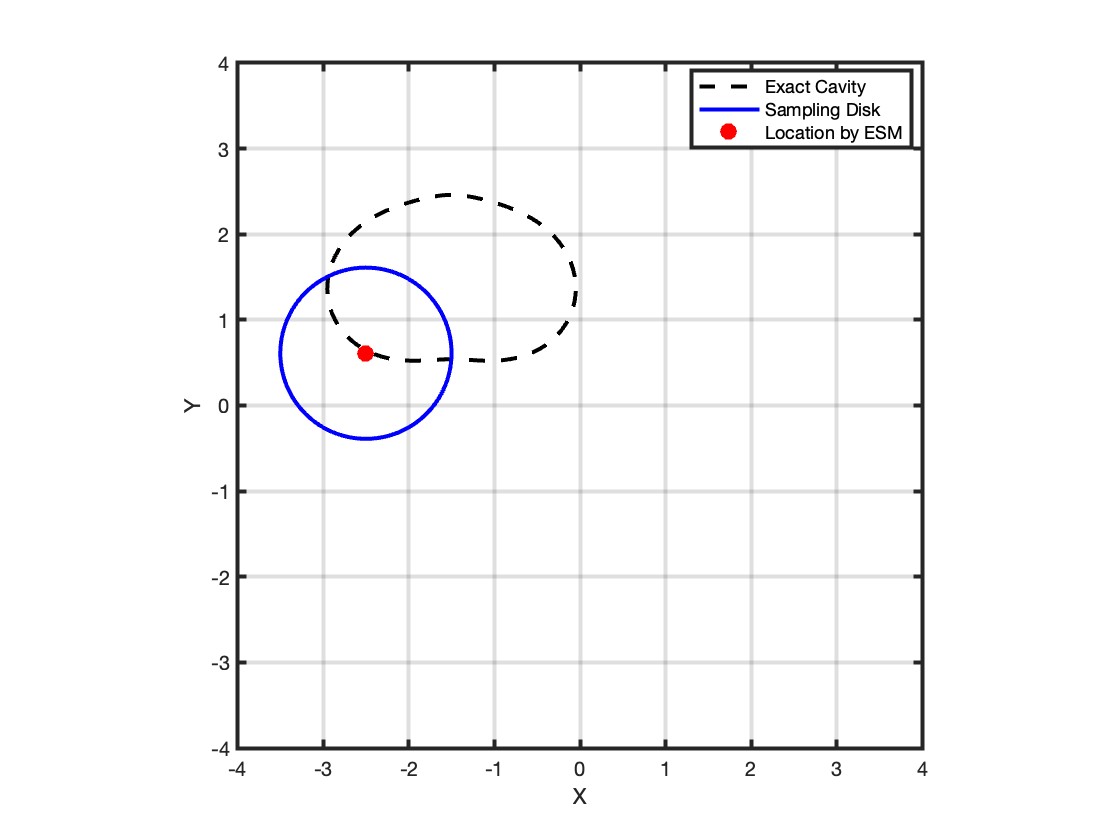}}\quad
\subfigure[Ten directions $\mathbb{S}^1_{\text{inc},2} $]{\includegraphics[width=0.48\textwidth]{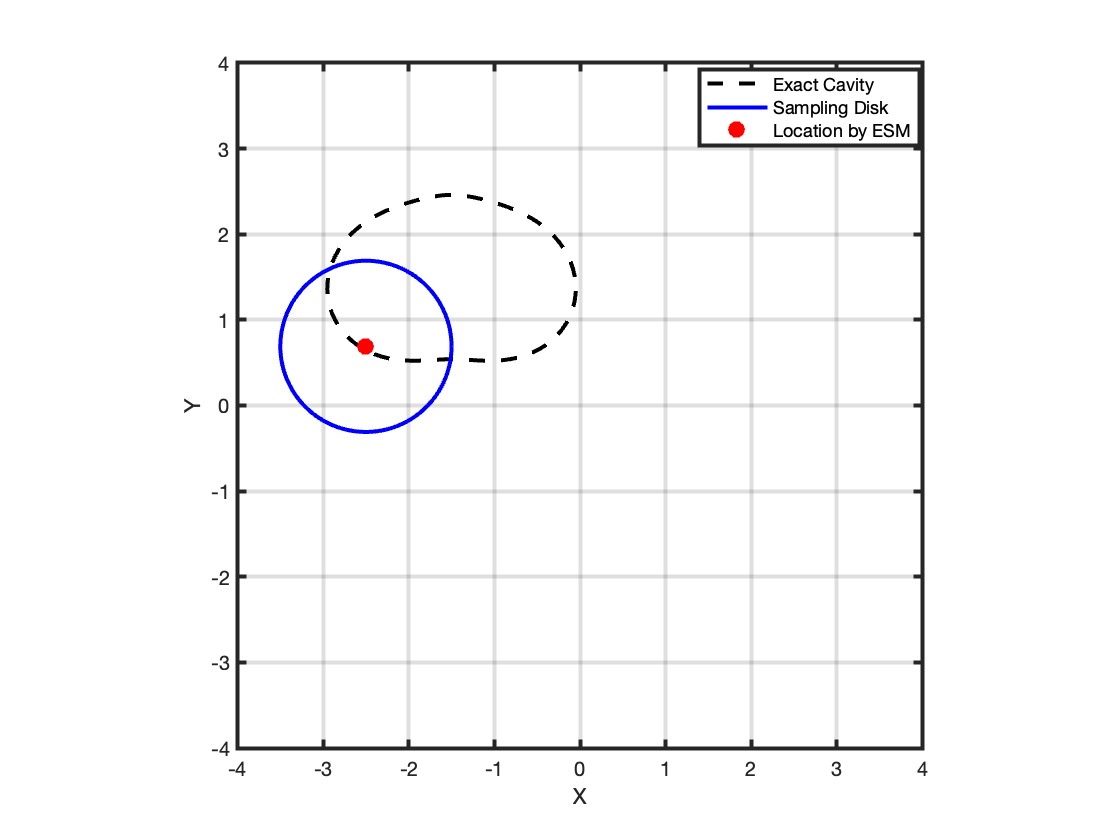}}
\caption{Reconstruction results of the ESM with multiple incident directions for the peach-shaped cavity centered at $(-1.5,1.5)$, using a fixed sampling disk radius $R=1$. Reconstructions are shown for the following incident apertures: (a) single direction $d_0$, (b) five directions $\mathbb{S}^1_{\text{inc},1} $, and (c) ten directions $\mathbb{S}^1_{\text{inc},2} $.}\label{fig:multiincpeach}
 \end{figure}
 
\subsubsection{Multi-frequency data}
 
 We present the implementation of the multiple-frequency ESM for the inverse biharmonic scattering problem with clamped boundary conditions. For the selected numerical experiments, we consider three frequency ranges: $[\kappa_{\min},\kappa_{\max}]=[\pi,2\pi],\, [\pi,4\pi]$, and $[\pi/3,5\pi]$. Each interval $[\kappa_{\min}, \kappa_{\max}]$ is uniformly divided into $L=5$ discrete wavenumbers given by
 \begin{align*}
     \kappa_{\ell}=\kappa_{\min}+(\ell+1)\frac{\kappa_{\max}-\kappa_{\min}}{L-1},\quad \ell=1,\dots,L.
 \end{align*}
As in previous examples, we compute the far-field data using the system of boundary integral equations:
 \begin{align*}
u^{\infty}( \hat{x} ,d,\kappa_{\ell}), \quad \text {for each} \quad \kappa_{\ell}\in [\kappa_{\min},\kappa_{\max}],\quad \ell=1,\dots,L,
 \end{align*}
 with a fixed incident direction $d$. We set $I=40$ observation directions for each wavenumber and use a uniform $200\times 200$ sampling grid. The Tikhonov regularization parameter is chosen as $\alpha=10^{-4}$. The discrete indicator function for multiple-frequency data at a fixed incident direction is given by
 \begin{align*}
\mathcal I(z)=\sum_{\ell=1}^L |\mathbf g_z^{\alpha}(d,\kappa_{\ell})|,\quad z\in\mathcal M.
 \end{align*}
 
 Figure \ref{fig:esmpeachfreq} shows the multi-frequency ESM reconstruction of the peach-shaped cavity using a fixed sampling radius of $R=1$. Similar results are observed for the apple- and peanut-shaped cavities. As with multiple incident directions, the advantage of using multi-frequency data is that the accuracy of the approximate location of the clamped cavity improves at a fixed radius $R$ as the frequency range, and hence the resolution, increases. The reconstructions remain accurate even with an arbitrarily chosen fixed radius, making the method less sensitive to the specific choice of $R$ when more frequency data are available. In contrast, the multilevel ESM iteratively selects an appropriate radius to improve the approximation of the cavity's location. However, if the radius is chosen too large or too small, the reconstruction quality may degrade significantly.

 \begin{figure}[htp]
\centering
\subfigure[Frequency range {$[\pi, 2\pi]$}]{\includegraphics[width=0.48\textwidth]{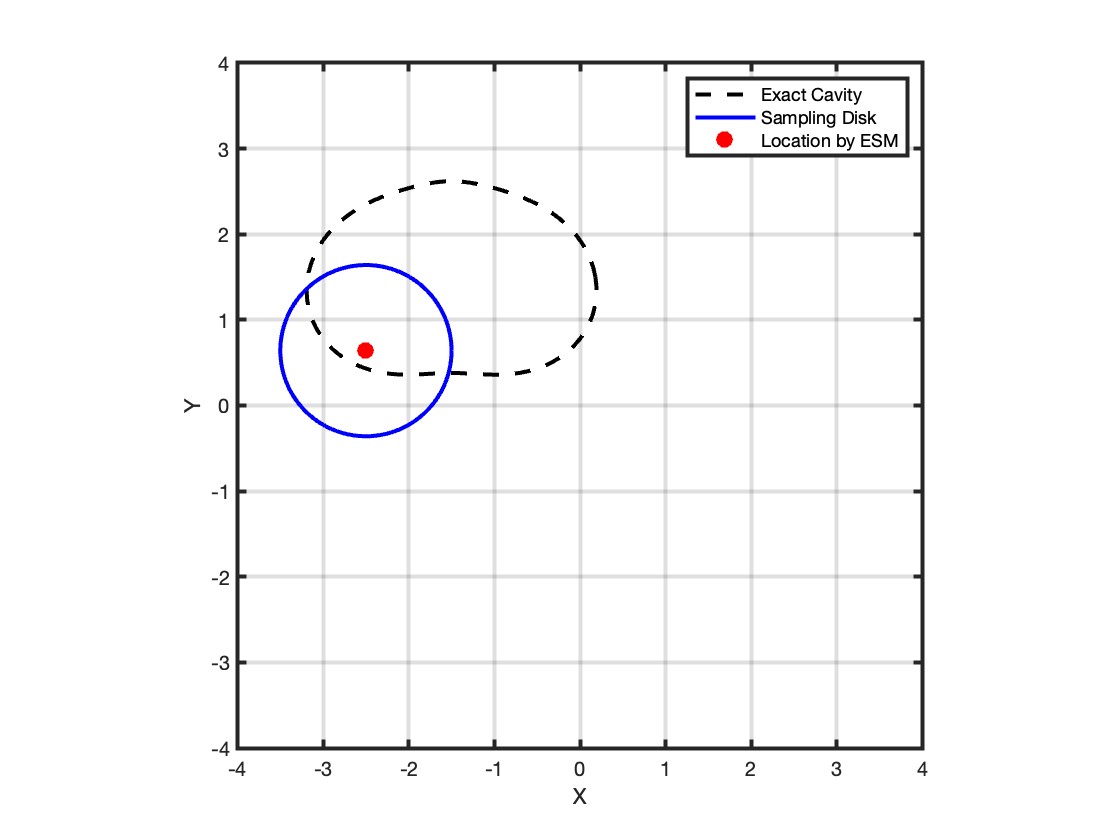}}\quad
\subfigure[Frequency range {$[\pi, 4\pi]$}]{\includegraphics[width=0.48\textwidth]{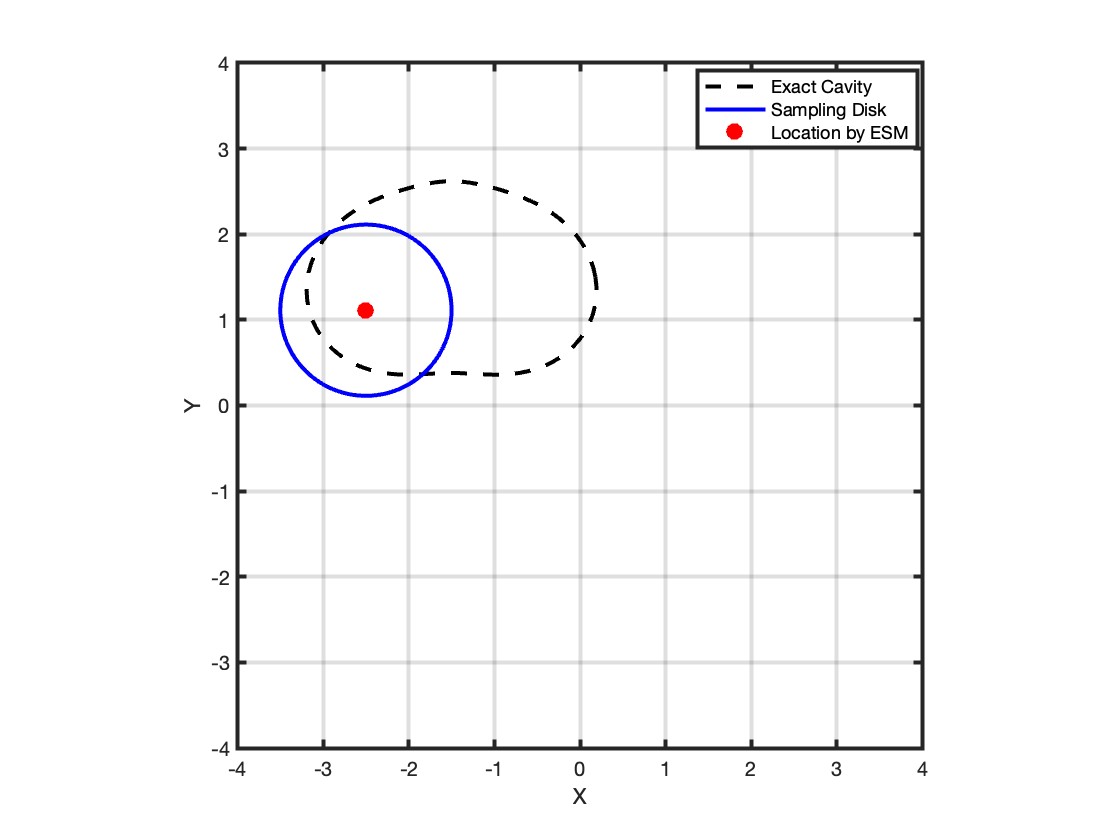}}\quad
\subfigure[Frequency range {$[\pi/3, 5\pi]$}]{\includegraphics[width=0.48\textwidth]{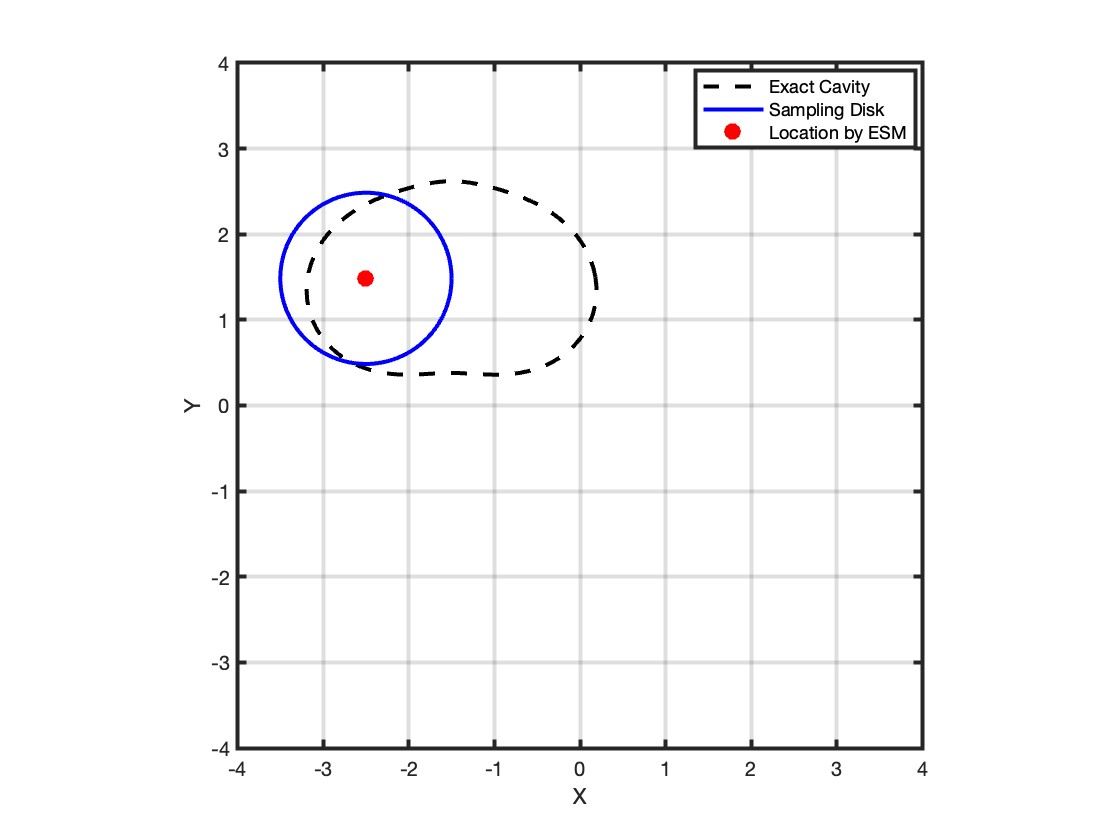}}
\caption{Reconstruction results of the multi-frequency ESM for the peach-shaped cavity shifted to $(-1.5,1.5)$ using a single incident direction $d_0$ and a fixed sampling radius $R=1$. Each frequency interval $[\kappa_{\min},\kappa_{\max}]$ is uniformly divided into $L=5$ wavenumbers. The reconstructions are shown for increasing frequency ranges: (a) $[\pi,2\pi]$, (b) $[\pi,4\pi]$, and (c) $[\pi/3,5\pi]$.}\label{fig:esmpeachfreq}
 \end{figure}
 
\section{Conclusion}\label{section 6}

In this paper, we have presented an alternative justification for the LSM based on far-field data, differing from \cite{guoetal2024} by requiring only the exclusion of eigenvalues associated with the clamped transmission problem. Notably, accurate reconstruction of clamped cavities remains possible even when the wavenumber corresponds to a Dirichlet eigenvalue of the negative Laplacian. The numerical experiments confirm the effectiveness of both the LSM and the ESM for the inverse cavity scattering problem of biharmonic waves in a Kirchhoff--Love plate, using far-field measurements. Furthermore, the indicator function exhibits robustness with respect to measurement noise, enabling reliable reconstruction of clamped cavities from Dirichlet boundary data.

Moreover, both multi-frequency ESM and ESM with multiple incident directions offer significant advantages by enhancing the accuracy of the approximate location of the clamped cavity, even when the sampling radius $R$ is fixed and arbitrary. As the frequency range or the number of incident directions increases, the reconstructions become more accurate. In contrast, when using a single incident direction at a fixed frequency, the choice of radius $R$ becomes more critical to ensure accurate reconstruction.
 
In comparison to the implementation of the LSM with near-field data in \cite{bourgeois2020linear}, the use of far-field data requires fewer measurements. When the observation points are sufficiently far from the cavity, the far-field pattern of the scattered field $u^s$ can be accurately approximated by the far-field pattern of its Helmholtz component $u^s_{\text{H}}$. This approximation reduces the amount of data needed for reliable reconstruction, making far-field methods more efficient than their near-field counterparts. 
 
Several open questions remain in the study of inverse biharmonic wave scattering. While this work focuses on the reconstruction of clamped cavities, future research may investigate the effectiveness of sampling methods in reconstructing cavities embedded in simply supported or free plates. Moreover, extending the LSM and ESM frameworks to accommodate penetrable cavities represents an interesting direction for further study.

%%%%%%%%%%%%%%%%%%%%%%%%%%%%%%%%%%%%%%%%%%%%%%%%%%%%%%

\end{document}